\documentclass[12pt,english]{article}
\usepackage[latin9]{inputenc}
\usepackage{amssymb}
\usepackage{amsthm}
\PassOptionsToPackage{normalem}{ulem}
\usepackage{ulem}



\setlength{\parindent}{0cm}


\newtheorem{thm}{Theorem}
\newtheorem{prop}[thm]{Proposition}
\newtheorem{lemma}[thm]{Lemma}
\newtheorem{claim}[thm]{Claim}
\newtheorem*{unclaim}{Claim}
\newtheorem{cor}[thm]{Corollary}

\theoremstyle{definition}
\newtheorem{defi}[thm]{Definition}
\newtheorem{rem}[thm]{Remark}

\newcommand{\R}{\mathbb{R}}
\def \Rplus {[0,\infty)}

\newcommand{\Rn}{\mathbb{R}^n}
\newcommand{\mz}{\setminus\sz}
\newcommand{\sz}{\cbrackets{0}}

\def \Rplus {[0,\infty)}

\def \intRn {\int_{\Rn}}

\def \intSn {\int_{\Sn}}
\def \intOn {\int_{O_n}}
\def \Sn {S^{n-1}}
\def \Snzero {S^{n-2}_0}
\def \E {\mathbb{E}}
\def \P {\mathbb{P}}

\def \bigo {\mathcal{O}}

\def \grad {\nabla}
\def \tab {\ \ \ \ }
\def \pushleft {\tab\tab\tab\tab\tab\tab\tab\tab\tab\tab\tab\tab\tab\tab\tab\tab\tab\tab\tab\tab\tab\tab\tab\tab\tab\tab\tab\tab\tab}

\def \I {\mathds{1}}

\def \sleq {\lesssim}
\def \sgeq {\gtrsim}

\def \nline {\hspace{1mm}\\}

\newcommand{\inlinesection}[1]{\textbf{{#1}.}}

\def \Tr {\text{Tr}}
\def \Pthperp {P_{\theta^\perp}}
\def \sign {\sigma_{n-1}}
\def \Var {\text{Var}}

\newcommand{\eref}[1]{(\ref{#1})}

\def \Id {Id}
\def \Loneiso {$L^1$-isotropic}
\def \Lpiso {$L^p$-isotropic}
\def \Lpcov {$L^p$-covariance}
\def \Covp {\Cov_p}

\def \dcube {\cbrackets{-1,1}^n}

\def \Cov {\text{Cov}}

\def \On {O_n}
\newcommand{\oo}[1]{\frac{1}{#1}}
\newcommand{\half}{\oo{2}}

\newcommand{\cbrackets}[1]{\left\{{#1}\right\}}
\newcommand{\norm}[1]{\|{#1}\|}
\newcommand{\normtext}[2]{\norm{#1}_{\text{#2}}}
\newcommand{\opnorm}[1]{\normtext{#1}{op}}
\newcommand{\hsnorm}[1]{\normtext{#1}{HS}}

\newcommand{\cb}[1]{\cbrackets{#1}}
\newcommand{\icol}[1]{
	\left(\begin{smallmatrix}#1\end{smallmatrix}\right)%
}

\usepackage{amsmath}
\usepackage{babel}
\usepackage{enumitem}
\usepackage{dsfont}
\usepackage[usenames, dvipsnames]{color}
\usepackage{tikz}
\usetikzlibrary{patterns}
\begin{document}

\setcounter{tocdepth}{1}
\setcounter{page}{1}
\newpage

\title{Concentration between Lévy's inequality and the Poincaré inequality for log-concave densities}
\date{}
\author{Erez Buchweitz\thanks{School of Mathematical Sciences, Tel Aviv University, Tel Aviv 69978, Israel. Email: erezmb@gmail.com. Supported in part by the European Research Council. This article is based on the author's M.Sc. thesis, prepared under the supervision of Prof. Bo'az Klartag.}}

\maketitle

\inlinesection{Abstract} Given a suitably normalized random vector $X\in\Rn$ we observe that the function $\theta\mapsto\E|X\cdot\theta|$, defined for $\theta\in\Sn$, admits surprisingly strong concentration far surpassing what is expected on account of Lévy's isoperimetric inequality. Among the measures to which the above holds are all log-concave measures, for which a solution of the similar problem concerning the third marginal moments $\theta\mapsto\E (X\cdot \theta)^3$ would imply the hyperplane conjecture.\\

\section{Introduction}

The aim of this note is twofold. We expand on a remark made by R. Eldan and B. Klartag concerning the hyperplane conjecture in convex geometry, while commenting on the confines of Lévy's isoperimetric inequality for functions on the sphere. In their paper \cite{EK} which establishes the connection between the thin shell property and the hyperplane conjecture, Eldan and Klartag observe that the hyperplane conjecture would be affirmed if a dimension-free upper bound is established for the quantity
\begin{equation}\label{third.moment.variance}
n^2\intSn \big(\E(X\cdot\theta)^3\big)^2 \ d\sign(\theta),
\end{equation}
valid for all isotropic log-concave random vectors $X\in\Rn$. Here, we denote by $\sign$ the uniform probability measure over the unit sphere $\Sn=\{x\in\Rn:x_1^2+...+x_n^2=1\}$ and by $X\cdot \theta$ the Euclidean inner product. Stated differently, the integral above is the variance of the function $\theta\mapsto \E(X\cdot\theta)^3$ taken with respect to $\sign$.
\nline

Though no insight is offered on how to achieve the bound \eref{third.moment.variance} on the third moment variance, they do however point to a similar problem concerning \textit{first} marginal moments. They suggest it might hold that
\begin{equation*}
\Var_\theta \ \E_X|X\cdot\theta| \ \leq \ C/n^2,
\end{equation*}
whenever $X$ is log-concave, symmetric and suitably normalized, with $C>0$ a universal constant. It is on this problem that we wish to elaborate.
\nline

We undertake here a slightly different perspective. Given a centered Borel probability measure $\mu$ on $\Rn$ with finite first moment, define a function on the sphere by
\begin{equation}\label{Fdef}
F_\mu(\theta) \ = \ \intRn (x\cdot\theta)_+ \ d\mu(x),
\end{equation}
where $t_+=\max\cb{t,0}$. Notice that $F_\mu(\theta)=F_\mu(-\theta)=\E_X|X\cdot\theta|/2$ whenever $X$ is a random vector distributed according to $\mu$. Geometrically, $F_\mu(\theta)$ is the $\theta$-component of the (unnormalized) center of mass of $\mu$ on the half-space in the direction of $\theta$.
In order to study $F_\mu$, we assume a normalization of $\mu$ in which the matrix
\begin{equation*}
\Cov_1(\mu) \ = \ \intRn x\otimes x \ \frac{d\mu(x)}{|x|},
\end{equation*}

whose elements are $\intRn x_ix_j/|x| \ d\mu(x)$, $i,j=1,...,n$ is scalar, with $|\cdot|$ denoting the Euclidean norm. Our first result reads as follows.
\begin{thm}\label{thm1}
	Let $\mu$ be a centered Borel probability measure on $\Rn$. Assume that for some $\alpha,\beta\in(0,\infty)$, $\Cov_1(\mu)=\alpha/\sqrt{n}\cdot\Id$ and 
	\begin{equation}\label{iint4}
	\intRn\intRn \frac{(x\cdot y)^4}{|x|^3|y|^3} \ d\mu(x)\ d\mu(y) \ \leq \ \frac{\beta\alpha^2}{n}.
	\end{equation}
	Then,
	$$
	\Var(F_\mu) \ \leq \ \frac{C(1+\beta)\alpha^2}{n^2}
	$$
	where $C>0$ is a universal constant
\end{thm}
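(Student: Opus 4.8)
The plan is to analyze $F_\mu$ on the sphere via its spherical harmonic expansion and exploit the fact that $F_\mu$ is built from the simple ridge functions $\theta \mapsto (x\cdot\theta)_+$. The key observation is that the variance of $F_\mu$ over $\sign$ equals the sum of the squared $L^2(\sign)$-norms of the projections of $F_\mu$ onto spherical harmonics of degree $\geq 1$. So I need to understand the harmonic content of $F_\mu$.

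**Covariance structure and the degree-one part**

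First I would compute $\int_{\Sn} F_\mu(\theta)\,d\sign(\theta)$ and, more importantly, the two-point correlation $\int_{\Sn} F_\mu(\theta)^2\,d\sign(\theta)$. Writing $F_\mu(\theta)^2 = \int\int (x\cdot\theta)_+(y\cdot\theta)_+\,d\mu(x)\,d\mu(y)$ and using Fubini, the variance becomes
$$
\Var(F_\mu) = \intRn\intRn \left[ \intSn (x\cdot\theta)_+(y\cdot\theta)_+\,d\sign(\theta) - \left(\intSn(x\cdot\theta)_+\,d\sign(\theta)\right)\left(\intSn(y\cdot\theta)_+\,d\sign(\theta)\right) \right]d\mu(x)\,d\mu(y).
$$
The inner spherical integral $\intSn (x\cdot\theta)_+(y\cdot\theta)_+\,d\sign(\theta)$ depends only on $|x|$, $|y|$, and the angle between $x$ and $y$; call $u = (x\cdot y)/(|x||y|)$. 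By a standard computation (projecting onto the plane spanned by $x,y$ and integrating in two dimensions, or by known formulas for such ``positive-part'' kernels) this integral equals $|x||y|\cdot g(u)/n + (\text{lower order in } n)$ for an explicit smooth even-plus-odd function $g$ on $[-1,1]$, and the constant (degree-zero) term cancels against the product of the single integrals. The key structural point is that $g(u) = c_1 u + c_2 u^2 + c_3 u^3 + \dots$ has a linear leading term, and the linear term $c_1 u$ contributes $c_1 \intRn\intRn (x\cdot y)/(|x||y|)\,d\mu(x)d\mu(y) = c_1 \Tr(\Cov_1(\mu))\cdot(\text{something})$, which by the hypothesis $\Cov_1(\mu) = (\alpha/\sqrt n)\Id$ is controlled: in fact $\intRn x/|x|^{1/2}\cdots$ — more precisely the degree-one spherical harmonic component of $F_\mu$ has squared norm governed by $\opnorm{\Cov_1(\mu)}^2$ up to the cross term, and since $X$ is centered the relevant vector quantity $\intRn x/|x|\,d\mu(x)$ may or may not vanish, so I must be careful to bound this term by $O(\alpha^2/n^2)$ using that $\Cov_1$ is scalar of the right size.

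**The higher harmonics and the role of hypothesis (4)**

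The main content is bounding the contribution of $u^2, u^3, \dots$. Here I would expand $g(u) = c_1 u + \sum_{k\geq 2} c_k u^k$ and note $\intRn\intRn |x||y| u^k / (|x||y|) \cdots$; the quadratic term gives $\intRn\intRn (x\cdot y)^2/(|x|^2|y|^2)\,d\mu\,d\mu = \hsnorm{\Cov_1(\mu)}^2 = \alpha^2/n$ (using the scalar normalization), which already contributes at the level $\alpha^2/n^2$ after the $1/n$ from the spherical integral — wait, that gives $\alpha^2/n^2$ only if there is an extra $1/n$, so I will need to track that the $u^2$ coefficient $c_2$ itself carries a factor $1/n$, which it does because $\intSn \theta_1^2\theta_2^2\,d\sign = O(1/n^2)$. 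For $k \geq 4$ I bound $|u|^k \leq u^4$ and invoke hypothesis \eref{iint4} directly:
$$
\intRn\intRn \frac{|x\cdot y|^k}{|x|^k|y|^k}\,d\mu(x)\,d\mu(y) \leq \intRn\intRn \frac{(x\cdot y)^4}{|x|^4|y|^4}\,d\mu(x)\,d\mu(y) \leq \frac{\beta\alpha^2}{n}
$$
(after noting $|x\cdot y|/(|x||y|)\le 1$ and that the hypothesis as stated with $|x|^3|y|^3$ dominates the version with $|x|^4|y|^4$ only if $|x|,|y|\ge 1$, so I should restate/use the hypothesis as written — the $(x\cdot y)^4/(|x|^3|y|^3)$ form is exactly what appears, and I will match powers accordingly). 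The cubic term $k=3$ is handled by Cauchy–Schwarz between the $k=2$ and $k=4$ bounds. The main obstacle I anticipate is twofold: (i) getting the expansion of the spherical kernel $\intSn(x\cdot\theta)_+(y\cdot\theta)_+\,d\sign(\theta)$ precise enough to see that each coefficient $c_k$ decays in $n$ at the right rate (this is a careful but routine computation with Gegenbauer polynomials / beta integrals), and (ii) controlling the degree-one harmonic, i.e. the term linear in $u$, since $\intRn x/|x|\,d\mu(x)$ need not vanish and its norm is not directly bounded by the hypotheses — I expect this forces either an application of Cauchy–Schwarz against $\Tr\Cov_1$ or a symmetrization argument, and this is where the constant $C(1+\beta)$ rather than $C\beta$ enters. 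Once these pieces are assembled, summing the geometric-type series in $k$ (valid since $|u|\le 1$ and the $c_k$ are summable after the $n$-scaling) yields $\Var(F_\mu) \le C(1+\beta)\alpha^2/n^2$.
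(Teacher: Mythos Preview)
Your overall strategy---write $\Var(F_\mu)=\E F_\mu^2-(\E F_\mu)^2$, Fubini, reduce the spherical integral to a function of $u=(x\cdot y)/(|x||y|)$, and Taylor expand---is exactly the paper's approach. But your execution has two genuine gaps, both stemming from losing track of the prefactor $|x||y|$.

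\textbf{The linear term is not an obstacle.} The spherical kernel is computed exactly in the paper to be
\[
\intSn(\theta\cdot\eta)_+(\theta\cdot\xi)_+\,d\sign(\theta)=\frac{1}{2\pi n}\,\varphi(u),\qquad \varphi(u)=(\pi-\arccos u)u+\sqrt{1-u^2},
\]
so the contribution to $\E F_\mu^2$ is $\iint\frac{|x||y|}{2\pi n}\varphi(u)\,d\mu\,d\mu$. The linear term of $\varphi$ is $\pi u/2$, and $|x||y|\cdot u=x\cdot y$, so after integration this term is $\frac{1}{4n}\bigl|\int x\,d\mu\bigr|^2=0$ because $\mu$ is \emph{centered}. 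The quantity $\int x/|x|\,d\mu$ never enters; your worry about it is a bookkeeping error.

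\textbf{The quadratic term does not carry an extra $1/n$, and this is where the real work lies.} The Taylor coefficients of $\varphi$ are dimension-free; the only $1/n$ is the global prefactor. Hence the $u^2$ term contributes
\[
\iint\frac{(x\cdot y)^2}{4\pi n\,|x||y|}\,d\mu\,d\mu=\frac{1}{4\pi n}\,\hsnorm{\Cov_1(\mu)}^2=\frac{\alpha^2}{4\pi n},
\]
which is order $\alpha^2/n$, \emph{not} $\alpha^2/n^2$. The point you are missing is that this is cancelled exactly by the next-order term in $(\E F_\mu)^2$: one has $\E F_\mu=\frac{1}{\sqrt{2\pi}}C_{n,1}^{-1}\int|x|\,d\mu$ with $C_{n,1}^{-2}=\frac{1}{n}+\frac{1}{2n^2}+O(1/n^3)$ and $\int|x|\,d\mu=\alpha\sqrt n$, so $(\E F_\mu)^2=\frac{\alpha^2}{2\pi}+\frac{\alpha^2}{4\pi n}+O(\alpha^2/n^2)$. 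The leading $\alpha^2/(2\pi)$ cancels the constant term of $\varphi$, and the $\alpha^2/(4\pi n)$ cancels the $u^2$ term. This second cancellation is precisely where the hypothesis $\Cov_1(\mu)=(\alpha/\sqrt n)\Id$ is used, and without it you are stuck at $\Var(F_\mu)=O(\alpha^2/n)$. What remains is the $O(u^4)$ tail of $\varphi$, and since $|x||y|\cdot u^4=(x\cdot y)^4/(|x|^3|y|^3)$, hypothesis \eref{iint4} bounds it by $\beta\alpha^2/n$; divided by the prefactor $1/n$ this gives the claimed $C(1+\beta)\alpha^2/n^2$. So the exponent $3$ in $|x|^3|y|^3$ is not a mismatch to be worked around---it is exactly what falls out.
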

We say that a probability measure $\mu$ is \textit{centered} if it has a \textit{center of mass} at the origin, i.e. $\intRn x\ d\mu=0$. When dealing with concentration of functions on the sphere, the classical result is the isoperimetric inequality due to Paul Lévy (see \cite{G, L}), relating concentration with the magnitude of the spherical gradient. Whenever $f:\Sn\to\R$ is a Lipschitz function, Lévy's inequality implies the bound
\begin{equation}\label{isoperimetric}
\|f\|_{L^{\psi_2}(\sign)} \ \leq \ \frac{C}{\sqrt{n-1}} \ \sup_{\theta\in\Sn } |\grad_S f(\theta)|
\end{equation}
and in particular $\Var(f) \leq 4\sup |\grad_S f|/(n-1)$. We generally use the symbols $c,C,C',\tilde{C}$ and so on to denote universal constants whose values differ between occurrences. Consider as well the spherical Poincaré inequality which provides the seemingly superior variance bound
\begin{equation}\label{poincare}
\tab\tab\ \ \ \Var(f) \ \leq \ \oo{n-1} \intSn |\grad_S f(\theta)|^2 \ d\sign(\theta).
\end{equation}
The effect of ``super concentration" discussed by Chatterjee \cite{Ch} is the non-tightness on inequality \eref{poincare}. In our case, inequality \eref{poincare} is essentially tight but much stronger than inequality \eref{isoperimetric}, $\E|\grad_S F_\mu|\ll \sup|\grad_S F_\mu|$. Even so we have a $\psi_1$ bound in the log-concave case (see Theorem \ref{thm5}). In particular $\E |\grad_S F_\mu|^2\leq C(1+\beta)\alpha^2/n$, while $\sup|\grad_S F_\mu|$ cannot be assumed in general to be anything less than a universal constant times $\alpha$, as is demonstrated for example by the uniform measure on the discrete cube.
The evidence we bring here relates to our geometric family of functions $F_\mu$ alone, yet we believe this strong concentration is a manifestation of some deep far-reaching phenomenon in concentration of measure.
Other related effects are discussed by Bobkov, Chistyakov and Götze \cite{BCG} and by Paouris and Valettas \cite{PV}.
\nline

The normalization where $\Cov_1(\mu)$ is scalar differs from the isotropic one, which requires the covariance matrix to be the identity. It may be though that both $\Cov_1(\mu)$ and the covariance matrix are scalar, for instance when a measure is the joint distribution of even, independent, identically distributed random variables. Regarding assumption \eref{iint4}, we view it as a regularity condition which indeed holds true in many scenarios, and we provide examples in the following pages. The argument we use to obtain Theorem \ref{thm1} may be furthermore applied to third marginal moments in an effort to bound the quantity \eref{third.moment.variance}, by employing the $L^3$-isotropic normalization as defined in Section 4. It seems this approach can only yield a reduction of \eref{third.moment.variance} to familiar problems known to imply the hyperplane conjecture. 
 \nline

It is worth noting that assumption \eref{iint4} cannot be dropped, as evidenced by the example of the discrete measure distributed evenly among the vectors of an orthonormal basis and their negatives. Before going on to give examples of cases in which the conditions of Theorem \ref{thm1} hold, we describe an additional assumption under which an even tighter concentration occurs.
\begin{thm}\label{thm2}
	In the setting of Theorem \ref{thm1}, if the assumptions hold with $\alpha>0$, 
	\begin{equation*}\label{beta3}
	\beta\leq 3+\gamma/n
	\end{equation*} and moreover $\iint (x\cdot y)^6/|x|^5|y|^5  d\mu(x) d\mu(y) \leq  \delta\alpha^2/n^2$	for some $\gamma, \delta >0$, then 
	\begin{equation*}
	\Var(F_\mu) \ \leq \ \frac{C(1+\gamma+\delta)\alpha^2}{n^3}
	\end{equation*}
	where $C>0$ is a universal constant.
\end{thm}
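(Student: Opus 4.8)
The plan is to expand $F_\mu$ into spherical harmonics and to follow how the additional hypotheses suppress the low-degree pieces. For a unit vector $u$ put $f_u(\theta)=(u\cdot\theta)_+$ and let $P_k$ be the orthogonal projection in $L^2(\sign)$ onto spherical harmonics of degree $k$. By rotation invariance $\lambda_k:=\norm{P_kf_u}_{L^2(\sign)}^2$ is independent of $u$, and the addition formula gives $\langle P_kf_u,P_kf_v\rangle=\lambda_k\,p_k(u\cdot v)$ with $p_k$ the Gegenbauer polynomial on $\Sn$ normalized by $p_k(1)=1$. Writing $u_x=x/\abs{x}$ and $s_{xy}=\tfrac{x\cdot y}{\abs{x}\abs{y}}$, and using $F_\mu(\theta)=\intRn\abs{x}\,f_{u_x}(\theta)\,d\mu(x)$, we get
$$\norm{P_kF_\mu}^2=\lambda_kM_k,\qquad M_k:=\iint p_k(s_{xy})\,\abs{x}\,\abs{y}\,d\mu(x)\,d\mu(y)\ge 0 .$$
Now $P_1F_\mu=0$ since $\mu$ is centered; $\lambda_k=0$ for odd $k\ge 3$ because $t_+=\tfrac{t}{2}+\tfrac{\abs{t}}{2}$ is a degree-one function plus an even one; and $M_2=\tfrac{1}{n-1}\bigl(n\hsnorm{\Cov_1(\mu)}^2-(\Tr\,\Cov_1(\mu))^2\bigr)=0$ because $\Cov_1(\mu)$ is scalar. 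Exactly as in establishing Theorem \ref{thm1}, this leaves
$$\Var(F_\mu)=\sum_{k\ge 4,\ k\ \mathrm{even}}\lambda_kM_k=\lambda_4M_4+\sum_{k\ge 6}\lambda_kM_k .$$

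The heart of the improvement is the degree-four term. Expand the monomial in the Gegenbauer basis, $s^4=a_4\,p_4(s)+a_2\,p_2(s)+a_0$, where the constant is the zeroth coefficient $a_0=\intSn\theta_1^4\,d\sign=\tfrac{3}{n(n+2)}$ and $a_4=1/\ell_4$ with $\ell_4=\tfrac{(n+2)(n+4)}{n^2-1}$ the leading coefficient of $p_4$; in particular $a_4$ is bounded below by a universal constant. Pairing this identity against the measure $\abs{x}\,\abs{y}\,d\mu(x)\,d\mu(y)$, and using $M_2=0$ together with $M_0=\bigl(\intRn\abs{x}\,d\mu(x)\bigr)^2=(\Tr\,\Cov_1(\mu))^2=\alpha^2 n$, gives
$$a_4M_4=\iint s_{xy}^4\,\abs{x}\,\abs{y}\,d\mu(x)\,d\mu(y)-a_0\,\alpha^2 n\le\frac{\beta\alpha^2}{n}-\frac{3\alpha^2}{n+2}=\frac{(\beta-3)n+2\beta}{n(n+2)}\,\alpha^2 ,$$
where the first integral equals the left side of \eref{iint4} and was bounded by it. Under $\beta\le 3+\gamma/n$ the numerator is at most $6+3\gamma$, so $M_4\le C(1+\gamma)\alpha^2/n^2$; since $M_4\ge 0$ and $\lambda_4\le\sum_k\lambda_k=\intSn(u\cdot\theta)_+^2\,d\sign=\tfrac{1}{2n}$, we conclude $\lambda_4M_4\le C(1+\gamma)\alpha^2/n^3$. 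Conceptually, the hypothesis $\beta\le 3+\gamma/n$ says that the fourth marginal-moment functional lies within $O(1/n)$ of the value $3\alpha^2/n$ attained by ``generic'' measures (indeed $M_4\ge 0$ already forces $\beta\ge 3n/(n+2)$), and it is precisely this coincidence that annihilates the leading, degree-four harmonic of $F_\mu$ down to order $n^{-3}$.

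For the tail $\sum_{k\ge 6}\lambda_kM_k$ one re-runs the argument behind Theorem \ref{thm1} with the sixth-moment hypothesis playing the role of \eref{iint4}. When the zonal kernel restricted to harmonics of degree $\ge 6$ is expanded in powers of $s_{xy}$, only the quantities $\iint s_{xy}^{2m}\,\abs{x}\,\abs{y}\,d\mu(x)\,d\mu(y)$ with $m\ge 3$ occur, and since $s_{xy}^{2m}\le s_{xy}^{6}$ these are all at most $\iint\tfrac{(x\cdot y)^6}{\abs{x}^5\abs{y}^5}\,d\mu(x)\,d\mu(y)\le\delta\alpha^2/n^2$. Feeding this in place of the fourth-moment bound into the estimate of Theorem \ref{thm1} turns its conclusion $\sum_{k\ge 4}\lambda_kM_k\le C(1+\beta)\alpha^2/n^2$ into $\sum_{k\ge 6}\lambda_kM_k\le C(1+\delta)\alpha^2/n^3$. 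Adding the two contributions yields $\Var(F_\mu)\le C(1+\gamma+\delta)\alpha^2/n^3$.

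The step I expect to cost the most is this last one — or rather the analytic input it inherits from Theorem \ref{thm1}. Controlling $\sum_{k\ge 6}\lambda_kM_k$ amounts to a pointwise estimate, uniform on $[-1,1]$, of the remainder $\sum_{k\ge 6}\lambda_kp_k(s)=\Psi(s)-\lambda_0-\lambda_1 s-\lambda_2p_2(s)-\lambda_4p_4(s)$ of the zonal kernel $\Psi(s)=\intSn(u\cdot\theta)_+(v\cdot\theta)_+\,d\sign$ (with $u\cdot v=s$), to the effect that this remainder is $O(n^{-1}s^6)$, perhaps up to an additive term of order $n^{-4}$. This requires knowing the Funk--Hecke eigenvalues $\lambda_k$ of $t\mapsto t_+$ — and the alternating near-cancellation of the high-degree terms near $s=0$ — to one order beyond what Theorem \ref{thm1} needed; a convenient route is comparison with the Gaussian kernel $\tfrac{s}{4n}+\tfrac{1}{2\pi n}\bigl(\sqrt{1-s^2}+s\arcsin s\bigr)$, whose even part has Taylor expansion $\tfrac{1}{2\pi n}\bigl(1+\tfrac12 s^2+\tfrac{1}{24}s^4+\tfrac{1}{80}s^6+\cdots\bigr)$, together with quantitative bounds on the $n$-dependent corrections and on the behaviour at the endpoints $s=0$ and $s=\pm 1$ (where the remainder is of order $n^{-4}$ and $n^{-1}$, respectively).
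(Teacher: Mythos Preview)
Your spherical–harmonic decomposition is a genuinely different route from the paper's. The paper never mentions harmonics: it computes the zonal kernel in closed form,
\[
\Psi(s)=\intSn(\theta\cdot u)_+(\theta\cdot v)_+\,d\sign(\theta)=\frac{1}{2\pi n}\bigl((\pi-\arccos s)s+\sqrt{1-s^2}\bigr),\qquad s=u\cdot v,
\]
Taylor–expands $\varphi(s)=(\pi-\arccos s)s+\sqrt{1-s^2}=1+\tfrac{\pi}{2}s+\tfrac12 s^2+\tfrac{1}{24}s^4+O(s^6)$ and likewise $C_{n,1}^{-2}=n^{-1}+\tfrac12 n^{-2}+\tfrac18 n^{-3}+O(n^{-4})$, and subtracts $\E F_\mu^2-(\E F_\mu)^2$ term by term. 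The constant, linear and quadratic pieces cancel \emph{exactly} (using $\mu$ centered and $\Cov_1(\mu)$ scalar), leaving
\[
\Var(F_\mu)=\frac{1}{48\pi n}\Bigl(\iint\frac{(x\cdot y)^4}{|x|^3|y|^3}\,d\mu\otimes\mu-\frac{3\alpha^2}{n}\Bigr)+O\!\left(\frac{(1+\delta)\alpha^2}{n^3}\right),
\]
and the hypothesis $\beta\le 3+\gamma/n$ finishes. Your identification of the threshold $\beta=3$ with the (near-)vanishing of the degree-$4$ harmonic is a nice structural explanation of where that number comes from; the paper's computation gives the same cancellation but without naming it.

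The gap in your argument is the tail $\sum_{k\ge 6}\lambda_kM_k$. Your middle paragraph asserts that the kernel ``restricted to harmonics of degree $\ge 6$,'' when expanded in powers of $s$, involves only $s^{2m}$ with $m\ge 3$. This is not true: the Gegenbauer projection onto degrees $\le 4$ is \emph{not} the degree-$4$ Taylor polynomial, so $R(s)=\Psi(s)-\lambda_0-\lambda_1 s-\lambda_2p_2(s)-\lambda_4p_4(s)$ has nonzero $s^0,s^2,s^4$ Taylor coefficients. Concretely, $\Psi(0)=\tfrac{1}{2\pi n}$ while $\lambda_0=C_{n,1}^{-2}/(2\pi)=\tfrac{1}{2\pi n}+\tfrac{1}{4\pi n^2}+\cdots$, so already $R(0)=-\tfrac{1}{4\pi n^2}+O(n^{-3})$, and since $M_0=\alpha^2 n$ this alone contributes $-\tfrac{\alpha^2}{4\pi n}$ to $\iint R(s_{xy})|x||y|$; it is cancelled, but by the $s^2$ and $s^4$ coefficients of $R$, not because they are absent. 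You cannot freely adjust the constant term (as you can the $p_1$ and $p_2$ coefficients via $M_1=M_2=0$), because $M_0\neq 0$. Your last paragraph correctly diagnoses what is really needed --- a pointwise bound $R(s)=O(n^{-1}s^6)+O(n^{-4})$ --- but carrying that out means computing $\lambda_0,\lambda_2,\lambda_4$ to several orders in $1/n$ and verifying the cancellations, which is strictly more work than the paper's direct subtraction. In short: your degree-$4$ analysis is correct and illuminating, but for the remainder the harmonic detour creates, rather than removes, bookkeeping; the paper's Taylor expansion of the explicit kernel is the shorter path.
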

Inequality \eref{iint4} with $\beta=3$ applies to the standard Gaussian probability measure, as well as any other spherically symmetric measure. It also holds, for example, for the discrete measure evenly distributed among the vertices of the discrete cube. We suspect the assumptions mentioned in Theorem \ref{thm2} hold at least for all sufficiently regular unconditional measures, i.e. when the density is invariant under reflection with respect to any of the axes.
\nline

We continue with the example of the discrete cube.
\begin{thm}\label{thm3}
	Let $\nu_n$ be the probability measure distributed uniformly on the discrete cube $\dcube$. Then 
	$$
	\Var(F_{\nu_n}) \leq \ C/n^3,
	$$
	where $C>0$ is a universal constant.
\end{thm}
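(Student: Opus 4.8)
The plan is to verify that $\nu_n$ satisfies the hypotheses of Theorem \ref{thm2} with $\alpha=1$ and with $\gamma,\delta$ absolute constants, and then to invoke that theorem. The feature that makes everything explicit is that $|x|=\sqrt n$ for every vertex $x\in\dcube$, so all the weights $1/|x|$ appearing in $\Cov_1$, in \eref{iint4}, and in the sixth-moment condition are constant, and the relevant integrals collapse to moments of $X\cdot Y$ for two independent uniform vertices $X,Y$.

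First I would record the normalization. Since $|x|=\sqrt n$ on the support, $\Cov_1(\nu_n)=n^{-1/2}\intRn x\otimes x\,d\nu_n(x)$, and the coordinates of a uniform vertex are independent Rademacher variables, so $\intRn x\otimes x\,d\nu_n=\Id$. Hence $\Cov_1(\nu_n)=n^{-1/2}\Id$, i.e. the scalarity hypothesis holds with $\alpha=1$. Likewise, for independent $X,Y\sim\nu_n$,
\[
\intRn\intRn\frac{(x\cdot y)^4}{|x|^3|y|^3}\,d\nu_n(x)\,d\nu_n(y)=\frac{\Exp{(X\cdot Y)^4}}{n^3},
\]
and the analogous double integral with sixth powers equals $\Exp{(X\cdot Y)^6}/n^5$. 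Now $X\cdot Y=\sum_{i=1}^n X_iY_i$ is a sum of $n$ independent Rademacher variables, and counting the set partitions of a four- (resp. six-) element set into blocks of even size gives $\Exp{(X\cdot Y)^4}=3n^2-2n$ and $\Exp{(X\cdot Y)^6}=15n^3-30n^2+16n$.

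Plugging in, the left-hand side of \eref{iint4} equals $(3n-2)/n^2\le 3/n$, so \eref{iint4} holds with $\beta=3$ (indeed with $\beta=3-2/n$); in particular $\beta\le 3+\gamma/n$ with, say, $\gamma=1$. Moreover the sixth-moment quantity equals $(15n^3-30n^2+16n)/n^5\le 15/n^2$, so the remaining hypothesis of Theorem \ref{thm2} holds with $\delta=15$. With $\alpha=1$ and $\gamma,\delta$ universal, Theorem \ref{thm2} yields $\Var(F_{\nu_n})\le C(1+\gamma+\delta)/n^3\le C'/n^3$, which is the assertion. No genuine obstacle arises here; the only point deserving a moment of attention is that $\Exp{(X\cdot Y)^4}=3n^2-2n$ does not exceed $3n^2$, which is precisely what lets us invoke the sharper Theorem \ref{thm2} rather than merely Theorem \ref{thm1} (the latter would only give the weaker bound $O(1/n^2)$).
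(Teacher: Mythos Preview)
Your proof is correct and follows essentially the same route as the paper: verify that $\nu_n$ is centered with $\Cov_1(\nu_n)=n^{-1/2}\Id$, use $|x|=\sqrt n$ on the support to reduce the double integrals to the fourth and sixth moments of a Rademacher sum, compute those moments exactly, and apply Theorem~\ref{thm2}. The only cosmetic differences are that the paper fixes $Y$ and writes $\E(\sum_i X_i)^{2k}$ rather than your $\E(X\cdot Y)^{2k}$, and the paper takes $\gamma=0$ whereas you take $\gamma=1$; neither matters.
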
 

This result is asymptotically optimal. A particular case of Khinchine's inequality states that summing real numbers $a=(a_1,...,a_n)$ with random signs yields the tight bounds 
\begin{equation*}
\frac{|a|}{\sqrt{2}} \ \leq \ \E\Big|\sum_{i=1}^n \varepsilon_i a_i\Big| \ \leq \ |a|,
\end{equation*}
where $\varepsilon_1, ...,\varepsilon_n$ are independent random signs. Even though the values of $\E |(\varepsilon_1,...,\varepsilon_n)\cdot a|$ range  in $[1/\sqrt{2},1]$ for $|a|=1$ in any dimension, Theorem \ref{thm3} implies that the variance over $\Sn$ diminishes at a fast rate of $1/n^3$. 
\nline

We may also consider random subsets of the discrete cube.
\begin{thm}\label{thm4}
	Let $X_1,...,X_N$ be random vertices of the discrete cube $\dcube$ taken independently, $N=n^{2+\delta}$ for some $\delta>0$. Define $\mu$ to be the discrete probability measure evenly distributed among $X_1,...,X_N$ (with repetitions). 
	
	Then with probability at least $1-\gamma(n, \delta)$ of choosing $X_1,...,X_N$,
	$$
	\Var(F_\mu) \ \leq \ C/n^2,
	$$
	where $\gamma(n,\delta) = 2n^2\exp\{-n^\delta/2\}+\exp\{-2n^{1+\delta/2}\}$
	and $C>0$ is a universal constant.
\end{thm}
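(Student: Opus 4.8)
The plan is to verify that for a typical choice of $X_1,\dots,X_N$, the empirical measure $\mu$ satisfies the hypotheses of Theorem \ref{thm1} with $\alpha$ comparable to $1$ (up to universal constants, since $|X_i|=\sqrt n$) and $\beta$ bounded by a universal constant. Then Theorem \ref{thm1} immediately gives $\Var(F_\mu)\le C/n^2$. So the entire task reduces to two estimates that must hold with the claimed probability: first, that $\Cov_1(\mu)=\frac1N\sum_i X_i\otimes X_i/|X_i|$ is close enough to a scalar matrix, and second, that the double integral in \eqref{iint4}, which here is $\frac{1}{N^2}\sum_{i,j}(X_i\cdot X_j)^4/(|X_i|^3|X_j|^3)=\frac{1}{n^6 N^2}\sum_{i,j}(X_i\cdot X_j)^4$, is at most $\beta\alpha^2/n$ for a universal $\beta$. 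Strictly speaking $\Cov_1(\mu)$ will not be \emph{exactly} scalar, so I would first note that Theorem \ref{thm1} is insensitive to replacing $\mu$ by its image under a linear map bringing $\Cov_1$ to scalar form, or alternatively re-examine the proof of Theorem \ref{thm1} to see it tolerates an $O(\cdot)$ perturbation; this bookkeeping is routine and I will treat $\alpha=\Theta(1)$.

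For the covariance estimate: each $X_i/|X_i|=X_i/\sqrt n$ is a unit vector with independent $\pm1/\sqrt n$ coordinates, so $\E[X_i\otimes X_i/\sqrt n]=\frac{1}{\sqrt n}\Id$, i.e.\ $\alpha=1$. The matrix $\frac1N\sum_i X_i\otimes X_i/\sqrt n$ is an average of $N$ i.i.d.\ bounded rank-one matrices; a matrix Bernstein / Chernoff bound (or, more elementarily, a union bound over the $\binom n2+n$ entries, each being an average of $N$ independent bounded terms controlled by Hoeffding) shows that with probability at least $1-2n^2\exp\{-cN/n^2\}$ all entries deviate from their means by at most, say, $\frac{1}{2\sqrt n}$. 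Since $N=n^{2+\delta}$, this failure probability is $\le 2n^2\exp\{-cn^\delta\}$, matching the first term of $\gamma(n,\delta)$ up to the constant in the exponent (which I can arrange by choosing the deviation threshold). This gives $\Cov_1(\mu)=\frac{\alpha'}{\sqrt n}\Id+E$ with $\alpha'$ within a factor $2$ of $1$ and $\|E\|$ small.

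For the fourth-moment integral: write $S=\sum_{i,j}(X_i\cdot X_j)^4$ and separate the diagonal $i=j$ (contributing $N\cdot n^4$, hence $\frac{1}{n^6N^2}\cdot Nn^4=\frac{1}{n^2N}$, negligible) from the off-diagonal. For $i\ne j$, conditionally on $X_j$ the quantity $X_i\cdot X_j$ is a Rademacher sum of $\ell^2$-norm $\sqrt n$, so $\E[(X_i\cdot X_j)^4\mid X_j]\le 3n^2$ by Khinchine (indeed exactly $3n^2-2n$). Thus $\E S\le 3n^{2}N^2$, and $\frac{1}{n^6N^2}\E S\le 3/n^4$, which is far below $\beta/n$; the real point is concentration of $S$ around this mean. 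I would bound $S$ by a union bound: by Hoeffding/Khinchine tail bounds, $\P[(X_i\cdot X_j)^4>t^2 n^2]\le 2e^{-t^2/2}$, so with probability $\ge 1-N^2\cdot 2e^{-n/2}$ every off-diagonal term is $\le n^3$ (taking $t=\sqrt n$), giving $S\le N^2 n^3$ deterministically on that event and hence $\frac{1}{n^6N^2}S\le 1/n^3\le \beta/n$. A cruder but cleaner route uses the concentration of $|X_i\cdot X_j|\le n^{3/4}$-type events; in any case the surviving failure probability here should be absorbed into the $\exp\{-2n^{1+\delta/2}\}$ term of $\gamma(n,\delta)$ (note $N^2 e^{-n/2}=e^{-n/2+(4+2\delta)\log n}$, which is not quite of that form, so I expect the intended argument conditions on $X_j$ and uses a sharper per-$j$ bound summed over the $N$ choices of $j$, yielding $N e^{-cn^{1+\delta/2}}$-type control — I would follow that finer split). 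This interplay between the two failure terms and getting the exponents exactly right is the one place requiring care.

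The main obstacle is thus not any single estimate — each is a standard Khinchine-plus-union-bound computation — but rather (a) handling the fact that $\Cov_1(\mu)$ is only approximately scalar, which requires either a perturbative version of Theorem \ref{thm1} or a normalizing linear map together with a check that \eqref{iint4} survives that map, and (b) tuning the concentration arguments so the total failure probability is exactly $\gamma(n,\delta)=2n^2\exp\{-n^\delta/2\}+\exp\{-2n^{1+\delta/2}\}$; the first summand clearly comes from the $n^2$ entrywise bounds on $\Cov_1$ with per-entry failure $\exp\{-n^\delta/2\}\approx\exp\{-cN/n^2\}$, and the second from a single global event controlling the fourth-moment sum.
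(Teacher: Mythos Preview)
Your overall strategy is the same as the paper's: formulate a relaxed version of Theorem~\ref{thm1} that tolerates approximate hypotheses, and then verify those hypotheses for the empirical measure $\mu$ with high probability. The paper does this explicitly via Proposition~\ref{prop6}, which replaces the exact assumptions $\int x\,d\mu=0$ and $\Cov_1(\mu)=\alpha n^{-1/2}\Id$ by the four approximate conditions you would need anyway. One thing you omit is the centering: Theorem~\ref{thm1} requires $\mu$ to be centered, and your linear-map fix does not address this; the paper checks $|\int x\,d\mu|\le 1/\sqrt n$ separately and absorbs its failure probability (together with the off-diagonal covariance bound) into the $2n^2\exp\{-n^\delta/2\}$ term. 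Your covariance argument is otherwise essentially identical to the paper's.

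The genuine gap is exactly where you suspect it: the concentration for the fourth-moment sum. Your union bound over pairs gives $N^2e^{-cn}$, and your ``finer split'' conditioning on $X_j$ would give at best $Ne^{-c\sqrt{N}}$ after a union bound over $j$; neither matches $e^{-2n^{1+\delta/2}}=e^{-2\sqrt N}$. The point is that the summands $Y_i=(\sum_j X_{ij})^4/n^2$ are \emph{not} sub-exponential but only $\psi_{1/2}$ (fourth powers of sub-Gaussians), so Hoeffding/Bernstein in their usual form do not apply. The paper invokes a Bernstein-type inequality for i.i.d.\ $\psi_\alpha$ variables due to Schmuckenschl\"ager: if $\E e^{\sqrt{Z}}<\infty$ then $\P\big(\tfrac1N\sum Z_i>t\big)\le\exp\{-\sqrt{Nt/32}\}$ for $t$ large enough. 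Applied to $Z_i=Y_i/2^{10}e^2$ (using Khinchine to check $\E e^{\sqrt{Z_i}}\le 2$), this yields the single global event with failure probability $e^{-2\sqrt N}$ that you were looking for. This is the one nontrivial tool you are missing; everything else in your plan is routine and agrees with the paper.
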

Note that $\gamma(n,\delta)\to 0$ as $n\to\infty$, when $\delta>0$ is fixed. It is plausible that a more delicate analysis will lead to a stronger $C/n^3$ variance bound, as with the measure supported on the entire discrete cube. This would perhaps require taking a somewhat larger subset.
\nline

Moving on, for log-concave measures an elegant result can be stated, in the form of a $\psi_1$ bound.
\begin{thm}\label{thm5}
	For any absolutely-continuous log-concave probability measure $\mu$ on $\Rn$, $n\geq C''$, there exists an affine position (namely, the \Loneiso\ position) in which
	$$
	\|F_\mu\|_{L^{\psi_1}(\sign)} \ \leq \ C/n.
	$$
	for all $t>0$. In particular, we obtain the moment bounds
	$$
	\|F_\mu-\E F_\mu\|_{L^p(\sign)}^p \ = \ \intSn |F_\mu(\theta)-\E F_\mu|^p\ d\sign(\theta) \ \leq \ (C'p/n)^p,
	$$
	for any $p\geq 1$. Here, $c,C,C'>0$ are universal constants.
\end{thm}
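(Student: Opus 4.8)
The plan is to deduce the statement from L\'evy's isoperimetric inequality \eref{isoperimetric} together with a single feature of the $L^1$-isotropic normalization: it fixes the scale of $\mu$ so that $\intRn|x|\,d\mu(x)$ is of order $n^{-1/2}$. Indeed, in the $L^1$-isotropic position (Section~4) the matrix $\Cov_1(\mu)$ is a prescribed scalar multiple of the identity, whence $\intRn|x|\,d\mu(x)=\Tr\,\Cov_1(\mu)\le C/\sqrt n$. Since $F_\mu\ge 0$, since $\|\cdot\|_{L^{\psi_1}(\sign)}\le C\|\cdot\|_{L^{\psi_2}(\sign)}$, and since a constant $a\ge 0$ satisfies $\|a\|_{L^{\psi_2}(\sign)}=a/\sqrt{\ln 2}$, it suffices to establish
$$
\E F_\mu\ \le\ C/n \qquad\text{and}\qquad \big\|F_\mu-\E F_\mu\big\|_{L^{\psi_2}(\sign)}\ \le\ C/n ;
$$
granting these, $\|F_\mu\|_{L^{\psi_1}}\le\|F_\mu\|_{L^{\psi_2}}\le\|F_\mu-\E F_\mu\|_{L^{\psi_2}}+\|\E F_\mu\|_{L^{\psi_2}}\le C/n$, and the stated moment bounds follow from the standard inequality $\|g\|_{L^p}\le C\sqrt p\,\|g\|_{L^{\psi_2}}$ applied to $g=F_\mu-\E F_\mu$ (which yields $\|F_\mu-\E F_\mu\|_{L^p}\le C\sqrt p/n\le Cp/n$).

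For the mean bound, Fubini's theorem and the rotational invariance of $\sign$ give
$$
\E F_\mu\ =\ \intRn\Big(\intSn (x\cdot\theta)_+\,d\sign(\theta)\Big)d\mu(x)\ =\ \Big(\intSn(\theta_1)_+\,d\sign(\theta)\Big)\cdot\intRn|x|\,d\mu(x),
$$
and $\intSn(\theta_1)_+\,d\sign=\tfrac12\intSn|\theta_1|\,d\sign\le\tfrac12\big(\intSn\theta_1^2\,d\sign\big)^{1/2}=\tfrac1{2\sqrt n}$ by Cauchy--Schwarz together with $\intSn\theta_1^2\,d\sign=1/n$; hence $\E F_\mu\le C/n$. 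For the fluctuation bound, note that $F_\mu$ is Lipschitz on the sphere: using $|(a)_+-(b)_+|\le|a-b|$,
$$
|F_\mu(\theta)-F_\mu(\vartheta)|\ \le\ \intRn\big|(x\cdot\theta)_+-(x\cdot\vartheta)_+\big|\,d\mu(x)\ \le\ |\theta-\vartheta|\intRn|x|\,d\mu(x)\ \le\ \frac{C}{\sqrt n}\,|\theta-\vartheta| ,
$$
so that $\sup_{\theta\in\Sn}|\grads F_\mu(\theta)|\le C/\sqrt n$ (one can also see this from $\grads(x\cdot\theta)_+=\I_{\{x\cdot\theta>0\}}\Pthperp x$, of norm at most $|x|$, and integrating). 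L\'evy's inequality \eref{isoperimetric} now gives $\|F_\mu-\E F_\mu\|_{L^{\psi_2}(\sign)}\le\tfrac{C}{\sqrt{n-1}}\cdot\tfrac{C}{\sqrt n}\le C/n$ for $n\ge 2$, which is the second estimate.

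I do not expect a genuine obstacle in this particular argument; the real content has already been isolated in Section~4, where the $L^1$-isotropic position is constructed and its normalization fixed. The one point worth stressing is why L\'evy's inequality, which is far too weak in the ordinary isotropic normalization --- there $\sup_\theta|\grads F_\mu|$ can be of constant order, as the uniform measure on the discrete cube demonstrates --- is essentially sharp here: in the $L^1$-isotropic position the crude pointwise bound $\sup_\theta|\grads F_\mu(\theta)|\le\intRn|x|\,d\mu$ is already $O(n^{-1/2})$, and an additional factor $n^{-1/2}$ (coming from $\intSn|\theta_1|\,d\sign\le n^{-1/2}$) brings $\E F_\mu$ down to $O(1/n)$; these two estimates are precisely what the argument consumes, and the resulting bound $O(1/n)$ is optimal because $\E F_\mu$ is itself of that order. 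Log-concavity, and the requirement $n\ge C''$, are used only to guarantee that $\Cov_1(\mu)$ is finite and nondegenerate so that the $L^1$-isotropic position is well defined.
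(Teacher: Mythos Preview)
Your argument rests on the claim that in the $L^1$-isotropic position $\intRn|x|\,d\mu\le C/\sqrt n$, but this is off by a factor of $n$. In that position $\Cov_1(\mu)=Z_{1,\mu}\,\Id$ with $Z_{1,\mu}=\intRn|x|^{-1}\,d\mu$, and Proposition~\ref{sect4lemma} gives $\sqrt n\,Z_{1,\mu}\in(c,C)$; hence
\[
\intRn|x|\,d\mu\ =\ \Tr\Cov_1(\mu)\ =\ n\,Z_{1,\mu}\ \simeq\ \sqrt n,
\]
not $1/\sqrt n$. Consequently $\E F_\mu$ is a universal constant (the paper states $\E F_\mu\ge c$ explicitly just after Theorem~\ref{thm5}), so the bound must be read as one on $\|F_\mu-\E F_\mu\|_{L^{\psi_1}}$; your conclusion $\E F_\mu\le C/n$ is simply false. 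Your Lipschitz estimate likewise degrades to $|F_\mu(\theta)-F_\mu(\vartheta)|\le C\sqrt n\,|\theta-\vartheta|$, and L\'evy then gives nothing.

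More fundamentally, even granting the correct Lipschitz bound $\sup_\theta|\grads F_\mu(\theta)|\le C$ (a constant), L\'evy's inequality yields only $\|F_\mu-\E F_\mu\|_{L^{\psi_2}}\le C/\sqrt n$, i.e.\ $\|F_\mu-\E F_\mu\|_{L^p}\le C\sqrt p/\sqrt n$. The paper makes exactly this comparison at the end of the introduction: the entire content of Theorem~\ref{thm5} is the \emph{improvement} over L\'evy by a factor $\sqrt p/\sqrt n$, so L\'evy cannot possibly suffice. The actual proof (Section~5) combines the second-order concentration inequality \eref{bcg.exp} of Bobkov--Chistyakov--G\"otze with two genuine inputs: the average-gradient bound $\E|\grads F_\mu|^2\le C/n$ from Proposition~\ref{gradprop} (fed by Proposition~\ref{sect4lemma}), and the uniform Hessian bound $\|(F_\mu)''_S(\theta)\|_{\text{op}}\le C$ of Proposition~\ref{propderiv2d}, which is reduced to a two-dimensional estimate on sections of log-concave densities. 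Log-concavity enters substantively in both ingredients, not merely to make $\Cov_1(\mu)$ finite.
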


In the above $\E F_\mu$ is the mean of $F_\mu$ and $\P=\sign$.
Note that for $p=2$ we get $\Var(F_\mu) \leq C/n^2$. A few clarifications are in order. First, recall that any log-concave measure is either absolutely continuous (a.c. for short) or has a density on some lower-dimensional subspace. Hence the assumption that the measure is a.c. is reasonable. Second, by affine position (or image) we mean  the pushed-forward measure under an invertible affine transformation. Third, the position in question, which we call the \Loneiso\ position, involves the matrix $\Cov_1(\mu)$ being scalar. A precise, generalized definition is given in Section 4. To avoid trivialities we note that assuming the \Loneiso\ normalization we have $\E F_\mu\geq c$ where $c>0$ is a universal constant. Finally, we may again wish to compare the concentration bounds given in Theorem \ref{thm5} to Lévy's inequality, which implies the $\psi_2$ bound \eref{isoperimetric} and the corresponding moment bounds
$$
\|F_\mu-\E F_\mu\|_{L^p(\sign)} \ \leq \ C\sqrt{p}/\sqrt{n}.
$$
We have thus improved the concentration implied by Lévy's inequality by a factor of $\sqrt{p}/\sqrt{n}$ for all moments. It is not immediately clear how to extend the $C/n^3$ variance bound from Theorem \ref{thm2} into an exponential tail bound stronger than that of Theorem \ref{thm5}.
\nline

The rest of this note is structured as follows. In Section 2 we lay out the general terms for a tighter variance bound and prove Theorems \ref{thm1} and \ref{thm2}. In Section 3 we discuss the example of the discrete cube and prove Theroems \ref{thm3} and \ref{thm4}. Section 4 is devoted to studying the \Loneiso\ position and its application to log-concave measures. This will help toward the proof of Theorem \ref{thm5}, which appears in the final Section 5.
\nline

Some proofs are omitted from the body of this note, and appear in full in the Appendix or in the author's M.Sc. thesis \cite{thesis}.
\nline 

\inlinesection{Acknowledgements} I hold a great deal of gratitude to Prof. Bo'az Klartag, my thesis supervisor, without whose guidance and insight this work could not have materialized.

\section{Conditions for tight variance}

We first provide a direct proof of Theorem \ref{thm2}.

\begin{proof}[Proof of Theorem \ref{thm2}]
Write $\Var(F_\mu) \ = \ \E F_\mu ^2 \ - (\E F_\mu)^2$ and evaluate each expression separately. By the definition \eref{Fdef} of $F_\mu$ and by rearranging the order of integration, the two components may be written as
\begin{equation}\label{var.decomp1}
\E F_\mu^2 \ = \ \intRn\intRn |x||y| \bigg(\intSn \Big(\theta\cdot \frac{x}{|x|}\Big)_+\Big(\theta\cdot \frac{y}{|y|}\Big)_+ d\sigma(\theta) \bigg) d\mu(x) \ d\mu(y)
\end{equation}
and 
\begin{equation}\label{var.decomp2}
\E F_\mu \ = \ \intRn |x|\bigg(\intSn \Big(\theta\cdot\frac{x}{|x|}\Big)_+  d\sigma(\theta)\bigg) d\mu(x).\tab\tab\tab\tab\tab\tab\tab\ 
\end{equation}
The two inner integrals over the sphere appearing in equations \eref{var.decomp1} and \eref{var.decomp2} may be approached in a similar manner. We demonstrate the argument for the inner integral of equation \eref{var.decomp1}; reduce the high-dimensional integral to expose the fact that the solution depends only on the angle between $x$ and $y$. Whenever $f:\Rn\to\R$ is a $p$-homogeneous function, i.e. $f(tx)=t^pf(x)$ for all $x\in\Rn$, $t>0$, polar integration yields the following change of variable formula for the standard Gaussian probability measure $\gamma_n$ on $\Rn$
\begin{equation}\label{polar}
\intRn f(x) \ d\gamma_n(x) \ = \ C_{n,p}\intSn f(\theta) \ d\sign(\theta),
\end{equation}
with the constant $C_{n,p}=n2^{p/2-1}\Gamma((n+p)/2)/\Gamma((n+2)/2)$, where $\Gamma$ is the Gamma function (for a proof, see \cite[Chapter 2]{thesis}). Write $x=|x|\eta$ and $y=|y|\xi$ for $\eta,\xi\in\Sn$ and apply the change of variable \eref{polar} to our 2-homogeneous function $\theta\mapsto (\theta\cdot\eta)_+(\theta\cdot\xi)_+$ of equation \eref{var.decomp1}. We get
\begin{align*}
\intSn (\theta\cdot\eta)_+(\theta\cdot\xi)_+ d\sigma(\theta) \ &= \ C_{n,2}^{-1}\intRn (z\cdot\eta)_+(z\cdot\xi)_+\ d\gamma_n(z)
\\&= \ C_{n,2}^{-1}\int_{\R^2} (z\cdot\eta)_+(z\cdot\xi)_+\ d\gamma_2(z)
\\&= \ \frac{C_{n,2}^{-1}}{2\pi} \int_0^{2\pi}\big(\icol{\cos \varphi\\\sin \varphi} \cdot \icol{1\\0}\big)_+\big(\icol{\cos \varphi\\\sin \varphi}\cdot\icol{\cos\rho\\\sin\rho}\big)_+ \ d\varphi
\\&= \ \frac{C_{n,2}^{-1}}{2\pi} \int_0^{2\pi} \cos(\varphi)_+\cos(\varphi-\rho)_+ \ d\varphi
\\&= \ \oo{2\pi n}\big((\pi-\rho)\cos\rho + \sin\rho\big),
\end{align*}
where we have denoted $\cos\rho = \eta\cdot\xi = (x\cdot y)/(|x||y|)$, $\rho\in[0,\pi]$ and as $C_{n,2}=n$. In the above, we projected the $n$-dimensional integral onto the plane containing $\eta$ and $\xi$ (permitting they are not colinear), then applied the change of variable formula \eref{polar} in reverse direction on the 2-dimensional plane.\\

Continuing, expand the function $\varphi(\tau) = (\pi-\arccos\tau)\tau + \sqrt{1-\tau^2}$ where $\arccos\tau\in[0,\pi]$ into a power series around 0, getting $\varphi(\tau) = 1 + \pi\tau/2+\tau^2/2+\tau^4/24+\bigo(\tau^6)$ where the notation $\bigo(\tau^6)$ represents a quantity whose absolute value is at most a universal constant times $\tau^6$. For a proof of this expansion, see \cite[Chapter 2]{thesis}. Applying the expansion to $\tau=\cos\rho$ and plugging into the computation \eref{var.decomp1} we arrive at
\begin{align*}
\E F_\mu^2 \ = \ \iint_{\Rn\times\Rn} \bigg(\frac{|x||y|}{2\pi n} \ + \ &\frac{x\cdot y}{4n} \ + \ \frac{(x\cdot y)^2}{4\pi n |x||y|} \
\\
+ \ &\frac{(x\cdot y)^4}{48\pi n|x|^3|y|^3} \ + \ \bigo\bigg(\frac{(x\cdot y)^6}{n|x|^5|y|^5}\bigg)\bigg) \ d\mu\otimes\mu.
\end{align*}
In a similar fashion the expression in equation \eref{var.decomp2} may be reduced to
\begin{equation*}
(\E F_\mu)^2 \ = \ \iint_{\Rn\times\Rn} \bigg( \frac{|x||y|}{2\pi n} \ + \ \frac{|x||y|}{4\pi n^2} \ + \ \frac{|x||y|}{16\pi n^3} \ + \  \bigo\bigg(\frac{|x||y|}{n^4}\bigg)\bigg) \ d\mu\otimes \mu,
\end{equation*}
having approximated $C_{n,1}^{-2}=1/n+1/2n^2+1/8n^3+\bigo(1/n^4)$ (see \cite{TE}). At this point note that the component linear in $x\cdot y$ equals after integration to the magnitude of the center of mass of $\mu$ squared and thus vanishes as $\mu$ is centered. Moreover,
\begin{equation*}
\int |x| d\mu \ = \ \Tr\Cov_1(\mu) \ = \ \alpha\sqrt{n} \tab\text{and}\ \  \int (x\cdot y)^2 \ \frac{d\mu(x)}{|x|} \ = \ \Cov_1(\mu)y\cdot y \ = \ \frac{\alpha|y|^2}{\sqrt{n}},
\end{equation*}
hence subtracting $(\E F_\mu)^2$ from $\E F_\mu^2$ we arrive at
\begin{equation*}
\Var(F_\mu) \ = \ \oo{48\pi n}\bigg(\iint \frac{(x\cdot y)^4}{|x|^3|y|^3} \ d\mu\otimes \mu \ - \ \frac{3\alpha^2}{n} \bigg)\ + \  \bigo\bigg(\frac{\alpha^2}{n^3} \ + \ \frac{\delta\alpha^2}{n^3}\bigg),
\end{equation*}
and the proof of the Theorem is completed by applying the final assumption. 
\end{proof}
To prove Theorem \ref{thm1}, simply repeat the argument above while taking the two series expansions up to the orders $\bigo(\tau^4)$ and $\bigo(1/n^3)$, respectively.
\nline

When $\mu$ is absolutely continuous $F_\mu$ is spherically differentiable, even $C^1$-smooth (see \cite[Chapter 2]{thesis}). To describe its gradient, we introduce the notation $\Pthperp:\Rn\to\Rn$ for the orthogonal projection onto the hyperplane perpendicular to $\theta$, and $H_\theta = \cb{x\in\Rn: x\cdot\theta>0}$ the half-space through the origin in the direction of $\theta$. Then the Euclidean gradient when we extend $F_\mu$ 1-homogeneously to $\Rn\mz$ is given by
\begin{equation}\label{2ddef}
\grad F_\mu(\theta) \ = \ \int_{H_{\theta}} x \ d\mu(x)
\end{equation}
and its spherical counterpart is $\grad_S F_\mu(\theta) = \Pthperp \grad F_\mu(\theta)$. We refer to \cite{BCG} for an introduction of the notion of the spherical derivative.
The quantity $\E |\grad_S F_\mu|^2$ we are able to compute along the lines of the proof of Theorem \ref{thm2}.
\begin{prop}\label{gradprop}
	Under the assumptions of Theorem \ref{thm1} and the additional assumption that $\mu$ is absolutely continuous,
	$$
	\intSn |\grad_S F_\mu(\theta)|^2 \ d\sign(\theta) \ \leq \ \frac{C(1+\beta)\alpha^2}{n}.
	$$
	\nline	
	Under the assumptions of Theorem \ref{thm2} and the additional assumption that $\mu$ is absolutely continuous,
	\begin{equation*}
	\intSn |\grad_S F_\mu(\theta)|^2 \ d\sign(\theta) \ \leq \ \frac{C'(1+\gamma+\delta)\alpha^2}{n^2}.
	\end{equation*}
	
	In the above, $C, C'>0$ are universal constants.
	
\end{prop}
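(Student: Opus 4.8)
The plan is to compute $\intSn\abs{\grad_S F_\mu}^2\,d\sign$ along the lines of the proof of Theorem \ref{thm2}, reducing it to a double integral against $\mu\otimes\mu$ which one then Taylor-expands. The starting point is the pointwise identity
\[
\abs{\grad_S F_\mu(\theta)}^2 \;=\; \abs{\grad F_\mu(\theta)}^2 \;-\; F_\mu(\theta)^2 ,
\]
which holds because $\grad_S F_\mu(\theta) = \Pthperp\grad F_\mu(\theta) = \grad F_\mu(\theta) - (\theta\cdot\grad F_\mu(\theta))\,\theta$ and, by \eref{2ddef} (valid since $\mu$ is absolutely continuous) together with $1$-homogeneity, $\theta\cdot\grad F_\mu(\theta) = \int_{H_\theta}(\theta\cdot x)\,d\mu(x) = \intRn (x\cdot\theta)_+\,d\mu(x) = F_\mu(\theta)$. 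Integrating over the sphere,
\[
\intSn\abs{\grad_S F_\mu(\theta)}^2\,d\sign(\theta) \;=\; \intSn\abs{\grad F_\mu(\theta)}^2\,d\sign(\theta) \;-\; \E F_\mu^2 ,
\]
where $\E F_\mu^2 = \intSn F_\mu^2\,d\sign$ is precisely the second moment already expanded in the proof of Theorem \ref{thm2}; so it remains to expand the first term in the same fashion and subtract.

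First I would write $\abs{\grad F_\mu(\theta)}^2 = \iint_{\Rn\times\Rn}(x\cdot y)\,\I_{H_\theta}(x)\,\I_{H_\theta}(y)\,d\mu(x)\,d\mu(y)$ and exchange the order of integration. The inner integral $\intSn \I_{H_\theta}(x)\,\I_{H_\theta}(y)\,d\sign(\theta)$ is a $0$-homogeneous function of $\theta$, so the change of variable \eref{polar} with $p=0$ (for which $C_{n,0}=1$), followed by projection onto the plane spanned by $x$ and $y$ exactly as in the proof of Theorem \ref{thm2}, evaluates it to $(\pi-\rho)/(2\pi)$, where $\cos\rho = (x\cdot y)/(\abs{x}\abs{y})$, $\rho\in[0,\pi]$. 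Hence
\[
\intSn\abs{\grad F_\mu(\theta)}^2\,d\sign(\theta) \;=\; \oo{2\pi}\iint_{\Rn\times\Rn}\abs{x}\,\abs{y}\,(\pi-\rho)\cos\rho\;d\mu(x)\,d\mu(y) .
\]
Now Taylor-expand $(\pi-\arccos\tau)\,\tau = \varphi(\tau)-\sqrt{1-\tau^2} = \tfrac{\pi}{2}\tau + \tau^2 + \tfrac{\tau^4}{6} + \bigo(\tau^6)$, reusing the expansion of $\varphi$ from the proof of Theorem \ref{thm2}, and substitute $\tau = (x\cdot y)/(\abs{x}\abs{y})$: the linear term integrates to $\tfrac{\pi}{2}\big|\int x\,d\mu\big|^2 = 0$ by centeredness, the quadratic term to $\iint (x\cdot y)^2/(\abs{x}\abs{y})\,d\mu\otimes\mu = \tfrac{\alpha}{\sqrt n}\Tr\Cov_1(\mu) = \alpha^2$ by the two identities recorded in the proof of Theorem \ref{thm2}, and (using $\abs{x\cdot y}\le\abs{x}\abs{y}$) all remaining terms are $\bigo\big(\iint (x\cdot y)^4/(\abs{x}^3\abs{y}^3)\,d\mu\otimes\mu\big) = \bigo(\beta\alpha^2/n)$ by \eref{iint4}. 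Thus both $\intSn\abs{\grad F_\mu}^2\,d\sign$ and $\E F_\mu^2$ equal $\alpha^2/(2\pi) + \bigo((1+\beta)\alpha^2/n)$; subtracting, the common leading term cancels and we obtain $\intSn\abs{\grad_S F_\mu}^2\,d\sign \le C(1+\beta)\alpha^2/n$.

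For the second assertion I would keep one further term in each expansion. Writing $A := \iint (x\cdot y)^4/(\abs{x}^3\abs{y}^3)\,d\mu\otimes\mu$, the $\tau^4$-term $\tfrac{\tau^4}{6}$ gives $\intSn\abs{\grad F_\mu}^2\,d\sign = \alpha^2/(2\pi) + A/(12\pi) + \bigo(\delta\alpha^2/n^2)$, the $\bigo(\tau^6)$ remainder being controlled by the hypothesis $\iint (x\cdot y)^6/(\abs{x}^5\abs{y}^5)\,d\mu\otimes\mu \le \delta\alpha^2/n^2$; while the $\tau^4$-term in the expansion of $\E F_\mu^2$ from the proof of Theorem \ref{thm2} gives $\E F_\mu^2 = \alpha^2/(2\pi) + \alpha^2/(4\pi n) + A/(48\pi n) + \bigo(\delta\alpha^2/n^3)$, with $A/(48\pi n) = \bigo((1+\gamma)\alpha^2/n^2)$ since $\beta\le 3+\gamma/n$. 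Subtracting,
\[
\intSn\abs{\grad_S F_\mu(\theta)}^2\,d\sign(\theta) \;=\; \frac{A}{12\pi} - \frac{\alpha^2}{4\pi n} + \bigo\!\left(\frac{(1+\gamma+\delta)\alpha^2}{n^2}\right),
\]
and now $A \le \beta\alpha^2/n \le 3\alpha^2/n + \gamma\alpha^2/n^2$ forces $A/(12\pi) - \alpha^2/(4\pi n) \le \gamma\alpha^2/(12\pi n^2)$, which yields the claimed $C'(1+\gamma+\delta)\alpha^2/n^2$. The whole computation mirrors the proof of Theorem \ref{thm2}, so the one genuinely delicate step is this last one: in the regime of Theorem \ref{thm2} one must not merely set $\beta = 3$ but use the quantitative hypothesis $\beta \le 3+\gamma/n$ to absorb the surviving order-$n^{-1}$ quantity $A/(12\pi)-\alpha^2/(4\pi n)$ into the error. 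Everything else --- the $0$-homogeneous case of \eref{polar}, the two-dimensional reduction, the identity $C_{n,2}^{-1}=1/n$, and the bookkeeping of the $\bigo(\tau^6)$ remainders once multiplied by $\abs{x}\abs{y}$ and integrated --- is routine given what is already in place.
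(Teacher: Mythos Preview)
Your proof is correct and follows essentially the same route as the paper's own argument in Appendix \ref{app.gradprop}: both decompose $|\grad_S F_\mu|^2 = |\grad F_\mu|^2 - F_\mu^2$, evaluate the inner spherical integral $\intSn \I_{\theta\cdot x\ge 0}\I_{\theta\cdot y\ge 0}\,d\sign(\theta) = (\pi-\rho)/(2\pi)$, Taylor-expand in $\tau=\cos\rho$, and subtract the expansion of $\E F_\mu^2$ from the proof of Theorem \ref{thm2}. Your packaging via $(\pi-\arccos\tau)\tau=\varphi(\tau)-\sqrt{1-\tau^2}$ and your explicit handling of the cancellation $A/(12\pi)-\alpha^2/(4\pi n)\le\gamma\alpha^2/(12\pi n^2)$ are cosmetic variants of what the paper does.
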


The proof of Proposition \ref{gradprop} appears in Appendix \ref{app.gradprop}. We may thus obtain Theorems \ref{thm1} and \ref{thm2} readily from Proposition \ref{gradprop} via the spherical Poincaré inequality
\begin{equation*}
\Var(f) \ \leq \ \E |\grad_S f|^2\ /\ (n-1)
\end{equation*}
pertaining to all $C^1$-smooth functions of the sphere. Proposition \ref{gradprop} will be of further use in succeeding discussions.
\nline

Yet a third way to obtain Theorems \ref{thm1} and \ref{thm2} is via the second-order Poincaré inequality on the sphere, put forth recently by S. G. Bobkov, G. P. Chistyakov and F. Götze \cite{BCG}; whenever $f:\Sn\to\R$ is $C^2$-smooth and has no linear spherical harmonic component,
\begin{equation*}
\Var(f) \ \leq \ \E \hsnorm{f''_S}^2 \ / \ 2n(n+2).
\end{equation*}
Indeed, $F_\mu$ is even and under some regularity conditions $F_\mu$ is twice differentiable and $\E\ \|(F_\mu)_S''\|_{HS}^2$ can be bounded accordingly, see Appendix \ref{app.2deriv}.
\nline

\section{The discrete cube}

As an instructive example, we consider the case of the discrete cube $\dcube$. It is straightforward to show that the discrete measure assigning equal probability to each of the cube's vertices, formally defined as
\begin{equation*}
\nu_n \ = \ \sum_{x \in \dcube} \delta_x/2^n
\end{equation*} 
where $\delta_x$ denotes an atom of mass one at $x$, meets the requirements of Theorem \ref{thm2} with $\alpha=1$, $\gamma=0$, $\delta=15$ in any dimension.

\begin{proof}[Proof of Theorem \ref{thm3}]
The measure $\nu_n$ is centered, and taking $X$ to be a random vector distributed according to $\nu_n$ we see that
\begin{equation*}
\sqrt{n}\Cov_1(\nu_n)_{ij} \ = \ \E X_iX_j \ = \ \delta_{ij}
\end{equation*}
as the coordinates of $X$ are independent random signs, where $\delta_{ij}$ here is Kronecker's delta. Adding $Y$ a second random vertex independent of $X$ and distributed according to $\nu_n$,  simple combinatorial calculations verify the remaining assumptions of Theorem \ref{thm2}. First,
\begin{align*}
n^3\iint \frac{(x\cdot y)^4}{|x|^3|y|^3} \ d\nu_n\otimes\nu_n \ &= \ \E_Y \E_X (X\cdot Y)^4 \pushleft
\\&= \ \E\Big(\sum_{i=1}^n X_i\Big)^4 
\\&= \ \sum_{i=1,..,n} \E X_i^4 \ + \ 3\cdot \sum_{\underset{i\neq j}{i,j=1,...,n}} \E X_i^2X_j^2 \ = \ 3n^2-2n.
\end{align*}
Accordingly,
\begin{align*}
n^5\iint &\frac{(x\cdot y)^6}{|x|^5|y|^5} \ d\nu_n\otimes\nu_n \ = \ \E \Big(\sum_{i=1}^n X_i\Big)^6
\\&= \ \sum_{i=1,..,n} \E X_i^6 \ + \ 15\cdot \sum_{\underset{i\neq j}{i,j=1,...,n}} \E X_i^2X_j^4 \ + \ 15\cdot \sum_{\underset{i\neq j\neq k}{i,j,k=1,...,n}} \E X_i^2X_j^2X_k^2
\\&= \ n \ + \ 15n(n-1) \ + \ 15n(n-1)(n-2) \ = \ 15n^3 \ + \ \bigo(n^2)\pushleft
\end{align*}

and this concludes the proof.
\end{proof} 
\begin{rem}
	The Lipschitz semi-norm of $F_{\nu_n}$ is at least $1/\sqrt{8}$ in any dimension, as may be illustrated by computing the magnitude of the spherical gradient at the points $\theta=(\cos\varphi, \sin\varphi, 0,...,0)$ for $\varphi\in(0, \pi/4)$. See \cite[Chapter 3]{thesis}.
\end{rem}
We now turn to proving Theorem \ref{thm4}. In the proof we clearly assume that $n\geq 8$. As we are dealing with a measure supported on a \textit{random} subset of the discrete cube, it would be unreasonable to expect the measure to have, for example, a center of mass \textit{exactly} at the origin, as required by Theorem \ref{thm1}. A delicate review of the proof of Theorem 1 will reveal that the Theorem's assumptions may be relaxed to the extent that they need to be met just approximately.
\begin{prop}[Alternative to Theorem \ref{thm1}]\label{prop6}
	Let $\mu$ be a Borel probability measure on $\Rn$. Assume that for some $\beta, \kappa, \lambda, \zeta \in (0,\infty)$,
	\begin{enumerate}
		\item $\big|\intRn x \ d\mu \big| \ \leq \ \kappa/\sqrt{n}$,
		\item $\big|n\cdot\|\Cov_1(\mu)\|_{HS}^2 - \text{Tr}\Cov_1(\mu)^2\big| \ \leq\  \lambda$,
		\item $\Tr\Cov_1(\mu)\ \leq\  \zeta\sqrt{n}$,
		\item $\iint \frac{(x\cdot y)^4}{|x|^3|y|^3} \ d\mu\otimes \mu \ \leq \ \beta/n$.
	\end{enumerate}
	Then, 
	\begin{equation*}
	\Var(F_\mu) \ \leq \ \frac{C(\kappa^2 + \lambda + \zeta^2 + \beta)}{n^2}
	\end{equation*}
	where $C>0$ is a universal constant.
\end{prop}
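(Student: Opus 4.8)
The plan is to essentially rerun the computation proving Theorems~\ref{thm1} and~\ref{thm2}, i.e., the argument in the proof of Theorem~\ref{thm2} with the two power series truncated at the orders $\bigo(\tau^4)$ and $\bigo(1/n^3)$ used there for Theorem~\ref{thm1}, while recording precisely where that proof invokes the scalar normalization $\Cov_1(\mu)=(\alpha/\sqrt n)\,\Id$ and the centering $\intRn x\,d\mu=0$, and replacing each such exact identity by the relevant approximate estimate among~(1)--(4). Carrying out the spherical reductions exactly as there, one obtains
\begin{equation*}
\E F_\mu^2 \ = \ \iint\bigg(\frac{|x||y|}{2\pi n}+\frac{x\cdot y}{4n}+\frac{(x\cdot y)^2}{4\pi n|x||y|}+\bigo\Big(\frac{(x\cdot y)^4}{n|x|^3|y|^3}\Big)\bigg)\,d\mu\otimes\mu
\end{equation*}
and $(\E F_\mu)^2=\iint\big(\frac{|x||y|}{2\pi n}+\frac{|x||y|}{4\pi n^2}+\bigo(\frac{|x||y|}{n^3})\big)\,d\mu\otimes\mu$.

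Upon subtracting, the two leading $|x||y|/(2\pi n)$ terms cancel with no hypothesis whatsoever. For the rest I would invoke the two identities already used in the excerpt, $\intRn|x|\,d\mu=\Tr\Cov_1(\mu)$ and $\intRn\frac{(x\cdot y)^2}{|x|}\,d\mu(x)=\Cov_1(\mu)y\cdot y$; integrating the latter once more against $d\mu(y)/|y|$ gives $\iint\frac{(x\cdot y)^2}{|x||y|}\,d\mu\otimes\mu=\Tr\big(\Cov_1(\mu)^2\big)=\hsnorm{\Cov_1(\mu)}^2$, the linear term integrates to $\big|\intRn x\,d\mu\big|^2/(4n)$, and $\iint|x||y|\,d\mu\otimes\mu=\big(\Tr\Cov_1(\mu)\big)^2$. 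This leaves
\begin{align*}
\Var(F_\mu) \ = \ &\frac{\big|\intRn x\,d\mu\big|^2}{4n}\ +\ \frac{n\hsnorm{\Cov_1(\mu)}^2-\big(\Tr\Cov_1(\mu)\big)^2}{4\pi n^2}\\
&+\ \bigo\bigg(\frac1n\iint\frac{(x\cdot y)^4}{|x|^3|y|^3}\,d\mu\otimes\mu\bigg)\ +\ \bigo\bigg(\frac{\big(\Tr\Cov_1(\mu)\big)^2}{n^3}\bigg).
\end{align*}

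It then remains to bound the four summands, one per hypothesis: (1) bounds the first by $\kappa^2/(4n^2)$; (2) bounds the second by $\lambda/(4\pi n^2)$ (here one may also note $n\hsnorm{\Cov_1(\mu)}^2\ge(\Tr\Cov_1(\mu))^2$ by Cauchy--Schwarz, so this summand is nonnegative, consistently with $\Var(F_\mu)\ge 0$); (4) bounds the third by $\bigo(\beta/n^2)$; and (3), via $\Tr\Cov_1(\mu)\le\zeta\sqrt n$, bounds the fourth by $\bigo(\zeta^2/n^2)$. Summing gives $\Var(F_\mu)\le C(\kappa^2+\lambda+\zeta^2+\beta)/n^2$.

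The work is almost entirely bookkeeping, and I expect no real obstacle; the two points that do require attention, both new relative to Theorem~\ref{thm1}, are: first, that without a scalar $\Cov_1(\mu)$ the two order-$1/n^2$ contributions --- the term $(x\cdot y)^2/(4\pi n|x||y|)$ from $\E F_\mu^2$ and $|x||y|/(4\pi n^2)$ from $(\E F_\mu)^2$ --- no longer cancel (they do vanish identically under the scalar normalization), so one must check that their difference equals exactly $\big(n\hsnorm{\Cov_1(\mu)}^2-(\Tr\Cov_1(\mu))^2\big)/(4\pi n^2)$, which is precisely the quantity controlled by~(2); and second, that the shorter expansions used for Theorem~\ref{thm1} still leave every remainder of size $\bigo(1/n^2)$ once~(3) and~(4) are applied --- in particular, hypothesis~(3) is exactly what is needed to absorb the $\bigo\big((\Tr\Cov_1(\mu))^2/n^3\big)$ remainder coming from the expansion of $(\E F_\mu)^2$.
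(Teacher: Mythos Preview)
Your proposal is correct and follows precisely the approach the paper indicates: the text preceding Proposition~\ref{prop6} states that ``a delicate review of the proof of Theorem~1 will reveal that the Theorem's assumptions may be relaxed to the extent that they need to be met just approximately,'' and defers the details to \cite[Chapter~3]{thesis}. Your bookkeeping --- in particular the identification $\iint\frac{(x\cdot y)^2}{|x||y|}\,d\mu\otimes\mu=\hsnorm{\Cov_1(\mu)}^2$ and the resulting second-order difference $\big(n\hsnorm{\Cov_1(\mu)}^2-(\Tr\Cov_1(\mu))^2\big)/(4\pi n^2)$ controlled by~(2) --- is exactly the ``delicate review'' the paper has in mind.
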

	
A proof of Proposition \ref{prop6} is given in \cite[Chapter 3]{thesis}.
We adopt the following setting. Let $X_1,...,X_N$ be i.i.d random vectors distributed according to $\nu_n$ with $N$ as in Theorem \ref{thm4},f and define 
\begin{equation*}
\mu \ = \ \sum_{i=1}^N \delta_{X_i}/N
\end{equation*}
a discrete probability measure on a subset of $\dcube$. It is important to note that the $N\cdot n$ coordinates of $X_1,...,X_N$ constitute independent random signs. Whenever $\varepsilon_1,...,\varepsilon_K$ are independent random signs, a concentration bound of
\begin{equation*}
\P \bigg(\oo{K}\Big|\sum_{k=1}^K \varepsilon_k\Big|>t\bigg) \ \leq \ 2e^{-Kt^2/2},
\end{equation*}
for all $t>0$ can be obtained via the Chernoff method (see e.g. \cite[Section 2.2]{BLM}).

\begin{proof}[Proof of Theorem \ref{thm4}]
	We verify that $\mu$ meets the requirements of Proposition \ref{prop6}. First, it holds with probability at least $1-2ne^{-N/2n^2}$ that \begin{equation*}
		\bigg|\intRn x\ d\mu\bigg|\ \leq \ 1/\sqrt{n}.
		\end{equation*}
	Indeed, the center of mass is $\sum_{i=1}^N X_i/N$ and we have
	\begin{equation*}
	\P\bigg(\left|\intRn x \ d\mu\right|^2>1/n\bigg) \ \leq \ \sum_{j=1}^n \P\bigg(\bigg|\sum_{i=1}^N X_{ij}\bigg| > N/n\bigg) \ \leq \ 2ne^{N/2n^2}.
	\end{equation*}
	Second, the diagonal elements of $\Cov_1(\mu)$ are all $1/\sqrt{n}$,
	\begin{equation*}
	\tab\tab\Cov_1(\mu)_{jj} \ = \ \intRn x_j^2 \ d\mu/|x| \ = \ 1/\sqrt{n}, \tab\ \ \ j=1,...,n
	\end{equation*}
	hence already $\Tr\Cov_1(\mu) = \sqrt{n}$, and furthermore the second requirement of Proposition \ref{prop6} boils down to the sum of the off-diagonal elements being small enough. Evidently with probability at least $1-2n(n-1)e^{-N/2n(n-1)}$ we have
	\begin{equation*}
	\sum_{\underset{j\neq k}{j,k=1,...,n}} \Cov_1(\mu)_{jk}^2 \ = \ \sum_{\underset{j\neq k}{j,k=1,...,n}} \bigg(\oo{N}\sum_{i=1}^n \frac{X_{ij}X_{ik}}{\sqrt{n}}\bigg)^2 \ \leq \ 1/n. 
	\end{equation*}
	Indeed,
	\begin{align*}
	\P\bigg(\sum_{j\neq k} \Cov_1(\mu)_{jk}^2 > \oo{n}\bigg) \ \leq \ \sum_{j\neq k}\P\bigg(\bigg|\sum_{i=1}^N X_{ij}X_{ik}\bigg|>N/\sqrt{n(n-1)}\bigg)
	\end{align*}
	and the claim follows as $\{X_{ij}X_{ik}\}_{i=1}^N$ is a set of independent random signs.
	
	Finally, we show with probability at least $1-e^{-2\sqrt{N}}$ that
	\begin{equation}\label{claim.schmuck}
	n\iint_{\Rn\times\Rn} \frac{(x\cdot y)^4}{|x|^3|y|^3} \ d\mu\otimes\mu \ = \ \oo{n^2} \intRn \bigg(\sum_{j=1}^n x_j\bigg)^4 d\mu \ \leq \ 2^{17}e^2.
	\end{equation}
	The proof employs a tail bound for a sum of iid $\psi_\alpha$ random variables described by M. Schmuckenschläger \cite{S}, and here we use the assumption that $n$ is large enough; if $Z$ is a non-negative random variable with $A:=\E \exp{\sqrt{Z}}<\infty$, then whenever $Z_1,...,Z_N$ are independent copies of $Z$ with $N\geq64/A$,
	\begin{equation}\label{schmuck}
	\P \bigg(\oo{N}\sum_{i=1}^N Z_i>t\bigg) \ \leq \ \exp{-\sqrt{Nt/2^5}}
	\end{equation}
	for all $t\geq 2^6\sqrt{A}$. In our case, set $Y_i \ = \ \big(\sum_{j=1}^n X_{ij}\big)^4/n^2$ and apply inequality \eref{schmuck} to $Y_1/2^{10}e^2$; we have
	\begin{align*}
	1 \ \leq \ \E \exp\big(\sqrt{Y_1}/2^5e\big) \ = \ \sum_{p=0}^\infty \frac{(2^5e)^{-p}}{p!}\E\ Y_1^{p/2} \ \leq \ \sum_{p=0}^\infty\frac{(2^5e)^{-p}}{p!} 2^{4p} p^p \ = \ 2.
	\end{align*}
	We used here the well-known Khinchine's inequality  for a sum of independent random signs (see  \cite{H}); $\E \big(\sum_{j=1}^n X_{1j}\big)^p\leq (8np)^{p/2}$. With application of inequality \eref{schmuck} the bound \eref{claim.schmuck} is obtained and the proof of Theorem \ref{thm4} is concluded.
\end{proof}

\section{The \Lpiso\ position}

We introduce the normalization that will be a primary tool in our treatment of the log-concave case.
\begin{defi}\label{lpcov}
	Let $\mu$ be a centered Borel probability measure on $\Rn$, and let $p>0$. Assume that $Z_{p,\mu}:=\intRn |x|^{p-2} d\mu$ is finite and nonzero. We say that $\mu$ is \textit{\Lpiso}\ if 
	\begin{equation*}
	\Covp(\mu)=Z_{p,\mu}\Id,
	\end{equation*} 
	where the \textit{\Lpcov\ matrix} is defined by
	\begin{equation*}
	\Covp(\mu) \ = \ \intRn x\otimes x \ |x|^{p-2} \ d\mu.
	\end{equation*}
\end{defi}

Notice that the $L^2$-covariance matrix is just the covariance matrix, and the $L^2$-isotropic normalization is the isotropic normalization. Together with the assertion that under mild assumptions any probability measure has an affine position in which it is \Loneiso, the main goal of this section will be to prove the following.
\begin{prop}\label{sect4lemma}
	Let $\mu$ be an \Loneiso\ log-concave probability measure on $\Rn$, with $n\geq C'$. Then $c \leq \sqrt{n}Z_{1,\mu}\leq C$ and
	\begin{equation*}
	\intRn\intRn \frac{(x\cdot y)^4}{|x|^3|y|^3} \ d\mu(x) \ d\mu(y) \ \leq \ C''/n,
	\end{equation*}
	where $c,C,C',C''>0$ are universal constants.
\end{prop}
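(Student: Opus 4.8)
The plan is to translate everything into statements about linear functionals of a log-concave vector and then invoke the standard thin-shell / small-ball machinery. Let $X$ be distributed according to the $L^1$-isotropic measure $\mu$. First I would establish the two-sided bound on $Z_{1,\mu}=\E|X|^{-1}$. By definition of the $L^1$-isotropic position, $\Cov_1(\mu)=Z_{1,\mu}\,\Id$, so taking traces gives $n Z_{1,\mu}=\Tr\Cov_1(\mu)=\E|X|$. Thus $\sqrt n Z_{1,\mu}=\E|X|/\sqrt n$, and I need $\E|X|\asymp\sqrt n$. This is where I expect the only real work to be: one needs to know that in the $L^1$-isotropic position the typical Euclidean norm of $X$ is of order $\sqrt n$. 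The lower bound $\E|X|\ge c\sqrt n$ should follow from Jensen or Cauchy–Schwarz together with a reverse bound relating $\E|X|$ and $(\E|X|^2)^{1/2}$; more precisely, since $\Cov_1(\mu)=Z_{1,\mu}\Id$ means $\E\bigl[x_ix_j/|x|\bigr]=Z_{1,\mu}\delta_{ij}$, one gets identities like $\E|X|=\Tr\Cov_1(\mu)=n Z_{1,\mu}$ and $\E\bigl[|X|^{-1}(\theta\cdot X)^2\bigr]=Z_{1,\mu}$ for every unit $\theta$. The upper bound $\E|X|\le C\sqrt n$ is the more delicate half and will need log-concavity: the idea is to compare the $L^1$-isotropic position with the classical isotropic one, using that the map from a measure to its $L^1$-covariance is, up to the norm-weight, a perturbation of the ordinary covariance, and that for log-concave measures concentration of $|X|$ (Klartag's thin-shell bound, or at the crude level Borell's lemma giving $\E|X|\le C(\E|X|^2)^{1/2}$) controls the discrepancy. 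I would phrase this as: in the $L^1$-isotropic position the matrix $\E[X\otimes X]$ is, by log-concavity and the reverse-Hölder inequalities for linear functionals, comparable to $Z_{1,\mu}^{-1}\cdot(\text{something }\asymp 1)$ times a scalar matrix, from which $\E|X|^2\asymp n/Z_{1,\mu}\cdot Z_{1,\mu}=\ldots$, closing the loop $\E|X|\asymp\sqrt n$.

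Granting $\sqrt n Z_{1,\mu}\asymp 1$, the second assertion reduces to bounding
\[
\iint \frac{(x\cdot y)^4}{|x|^3|y|^3}\,d\mu(x)\,d\mu(y)
=\E_Y\,\E_X\Bigl(\frac{X}{|X|}\cdot\frac{Y}{|Y|}\Bigr)^4|X|\,|Y|\cdot\frac{1}{|X|^2|Y|^2}\cdot|X|\,|Y|,
\]
i.e.\ the quantity $\E_{X,Y}\,(X\cdot Y)^4/(|X|^3|Y|^3)$. I would condition on $Y$ and write the inner integral as $\E_X\bigl[(u_Y\cdot X)^4\,|X|^{-3}\bigr]\cdot|Y|^{-3}|Y|^4/|Y|$ where $u_Y=Y/|Y|$; more cleanly, fix a unit vector $\theta$ and estimate $\E\bigl[(\theta\cdot X)^4\,|X|^{-3}\bigr]$. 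Using $|X|^{-3}\le |X|^{-1}\cdot|X|^{-2}$ is too lossy, so instead I would split according to whether $|X|\ge c\sqrt n$ or not: on the bulk event $|X|\ge c\sqrt n$ one has $|X|^{-3}\le C n^{-3/2}$, and then $\E(\theta\cdot X)^4\le C(\E(\theta\cdot X)^2)^2$ by log-concavity (reverse Hölder / Borell), while $\E(\theta\cdot X)^2$ is controlled because $\E[(\theta\cdot X)^2|X|^{-1}]=Z_{1,\mu}\asymp n^{-1/2}$ forces $(\theta\cdot X)$ to be typically of order $1$ after the $|X|^{-1}\asymp n^{-1/2}$ weight is removed — made rigorous again via the thin-shell concentration of $|X|$. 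This yields $\E\bigl[(\theta\cdot X)^4|X|^{-3}\bigr]\lesssim n^{-3/2}\cdot(n^{-1/2}\cdot n^{1/2})^2/\text{(normalizations)}\asymp n^{-3/2}$, and similarly for the $Y$-integral, giving the product bound $\asymp n^{-3}\cdot(\text{correction})$ — but we only need $\le C''/n$, which is far weaker, so generous constants suffice. The complement event $|X|<c\sqrt n$ is handled by the exponential small-deviation bound for $|X|$ of a log-concave isotropic-type vector (Paouris's small-ball inequality, or a direct Borell argument), whose contribution is exponentially small in $n$ and hence negligible.

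The main obstacle, as indicated, is the first step: controlling $\E|X|$ (equivalently, the thin-shell behavior of $|X|$) in the non-standard $L^1$-isotropic position rather than the classical isotropic one. If a clean comparison between the two positions is not available off the shelf, I would instead argue directly: log-concavity of $\mu$ is preserved under linear maps, so $\mu$ in its $L^1$-isotropic position is still log-concave, and one can run Klartag's concentration argument for $|X|$ using only the two moment identities $\E[|X|^{-1}x_ix_j]=Z_{1,\mu}\delta_{ij}$ and $\E|X|^{-1}=Z_{1,\mu}$, bootstrapping from them to $\E|X|^2\asymp n\,\E|X|^2\,Z_{1,\mu}$-type relations. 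Once the thin-shell estimate $|X|\asymp\sqrt n$ (with high probability and in $L^p$ for moderate $p$) is in hand, every remaining integral is a routine reverse-Hölder computation with plenty of room to spare against the target $C''/n$. Throughout I would be careful that all constants are universal — dimension-free — which is guaranteed because log-concavity supplies dimension-free reverse-Hölder and small-ball constants, and $n\ge C'$ lets me absorb lower-order error terms.
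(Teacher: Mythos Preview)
Your outline correctly isolates the crux: everything reduces to showing that in the $L^1$-isotropic position the vector $X$ behaves like an isotropic log-concave vector, i.e.\ $|X|\asymp\sqrt n$ in a quantitative sense. But your argument for this step is essentially circular. You propose to use thin-shell concentration of $|X|$ to control the discrepancy between $\Cov_1(\mu)$ and $\Cov(\mu)$, yet thin-shell bounds are proved \emph{for} the isotropic position, which is exactly what you are trying to compare to. The ``bootstrapping'' sentence at the end (``$\E|X|^2\asymp n\,\E|X|^2\,Z_{1,\mu}$-type relations'') is not an identity one can close up; as written it is either tautological or dimensionally off. The paper handles this by proving a separate comparison lemma: if $T\in GL_n$ carries $\mu$ to isotropic position, then $\|T\|_{\mathrm{op}},\|T^{-1}\|_{\mathrm{op}}\in[c,C]$. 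The proof of that lemma is where the real work sits, and it uses two ingredients absent from your sketch: the extremal characterization of the $L^p$-isotropic position (it minimizes $\int|Sx|^p\,d\mu$ over $S\in SL_n$, giving a one-sided inequality between the two positions), and Paouris's \emph{negative}-moment estimate $\bigl(\int|x|^{-k}d\mu\bigr)^{1/k}\lesssim\bigl(\int|x|^k d\mu\bigr)^{-1/k}$ for isotropic log-concave $\mu$, which is what actually controls the weight $|x|^{p-2}$ with $p<2$. Invoking ``small-ball'' only for the tail event $\{|X|<c\sqrt n\}$ is not enough; one needs the quantitative negative-moment bound on the bulk.

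Once the comparison $|Tx|\simeq|x|$ is in hand, the paper's treatment of the double integral is also cleaner than your event-splitting. A single Cauchy--Schwarz gives
\[
\int\frac{(x\cdot y)^4}{|x|^3}\,d\mu(x)\ \le\ \Bigl(\int(x\cdot y)^8\,d\mu(x)\Bigr)^{1/2}\Bigl(\int|x|^{-6}\,d\mu(x)\Bigr)^{1/2},
\]
and after the change of variables by $T$ both factors are handled directly: the first by the reverse H\"older inequality for one-dimensional log-concave marginals (reducing the $8$th moment to the $2$nd), the second by Paouris. This yields $\lesssim|y|^4/n^{3/2}$, and integrating against $|y|^{-3}\,d\mu(y)$ gives $n^{-3/2}\int|y|\,d\mu\simeq n^{-1}$. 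No decomposition into bulk and small-ball regions is needed, and no slack is wasted.
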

An a.c. log-concave measure is one whose density is a log-concave function, i.e. of the form $e^{-H}$ for a convex function $H:\Rn\to(-\infty,\infty]$. We refer to  \cite{BGVV} for a comprehensive introduction of log-concave measures and their importance through their connection to convex bodies.
Proposition \ref{sect4lemma} suffices to establish that Theorem \ref{thm1} may be applied to \Loneiso\ log-concave measures, resulting in the bound
\begin{equation*}
\Var(F_\mu) \ \leq \ C/n^2
\end{equation*}
with the universal constant $C>0$ applying to all \Loneiso\ log-concave probability measures in any large enough dimension. 
\begin{rem}
	It is worthwhile noting that an \Loneiso\ log-concave measure has its first moment bounded between two constants; $\int |x| \ d\mu = \Tr \Cov_1(\mu) \ = \ \sqrt{n}Z_{1,\mu} \in (c,C)$. Hence an \Loneiso\ measure is non-degenerate in a sense.
\end{rem}

We now describe a few properties of the \Lpiso\ normalization. First, under mild integrability conditions a measure always has an affine position in which it is \Lpiso.
\begin{lemma}[Existence]\label{existence}
	Let $\mu$ be a centered Borel probability on $\Rn$, and let $p>0$. Assume that both $\int |x|^p \ d\mu$ and $\int |x|^{p-2} \ d\mu$ are finite and non-zero, and that the support of $\mu$ is not contained in any hyperplane. Then there exists an \Lpiso\ linear position of $\mu$.
\end{lemma}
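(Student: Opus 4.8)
The plan is to find the linear map via a variational/fixed-point argument. Given an invertible linear map $T$, consider the pushforward $T_*\mu$; we want to choose $T$ so that $\Cov_p(T_*\mu) = Z_{p,T_*\mu}\cdot\Id$, i.e. so that $\Cov_p$ becomes scalar. The natural quantity to optimize is a normalized version of $\int |Tx|^p\,d\mu(x)$ over $T$ in some compact set, and to read off the Euler--Lagrange equations. Concretely, I would first reduce to searching within positive-definite symmetric $T$ (polar decomposition: replacing $T$ by $(TT^*)^{1/2}$ does not change $T_*\mu$ up to rotation, and rotations don't affect whether $\Cov_p$ is scalar), and then set up a minimization of
$$\Phi(T) \;=\; \int_{\Rn} |Tx|^p \, d\mu(x)$$
subject to a normalization such as $\det T = 1$ (for $p$ where this is the right homogeneity) — or, to handle all $p>0$ uniformly and avoid sign issues, minimize $\Phi(T)/(\det T)^{p/n}$ over positive-definite $T$.

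Next I would verify the variational problem has a minimizer. This is where the integrability hypotheses enter: finiteness of $\int|x|^p\,d\mu$ gives $\Phi(T)<\infty$ for all $T$, and the assumption that $\mathrm{supp}\,\mu$ is not contained in a hyperplane is exactly what prevents $\Phi$ from degenerating — if $T$ has a very small singular value in direction $u$, then since $\mu$ gives positive mass to $\{|x\cdot u|>\varepsilon\}$ for some $\varepsilon>0$, the ratio $\Phi(T)/(\det T)^{p/n}$ blows up. So the infimum of the scale-invariant functional is attained on the (compact, after normalization) set of positive-definite matrices with, say, $\det T=1$ and all singular values bounded; one checks $\Phi$ is continuous and the sublevel sets are compact. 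Then at the minimizer $T_0$, differentiating $t\mapsto \Phi\big((\Id+tS)T_0\big)$ at $t=0$ for symmetric traceless $S$ gives
$$\int_{\Rn} |T_0 x|^{p-2}\, (T_0x)\otimes(T_0x)\,d\mu(x) \;\perp\; \{\text{traceless symmetric}\},$$
which says precisely that $\Cov_p(T_0{}_*\mu)$ is a multiple of the identity. Finally I'd note that the multiple must be $Z_{p,T_0{}_*\mu}$ by taking traces, and that $Z_{p,T_0{}_*\mu} = \int|T_0x|^{p-2}\,d\mu$ is finite and nonzero — finiteness needs a short argument from $\int|x|^{p-2}\,d\mu<\infty$ (it's not literally the same integral after applying $T_0$, but $T_0$ has bounded singular values, bounded below too, so comparability holds), and nonzero is immediate.

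The main obstacle I expect is the case $p<2$, where $|x|^{p-2}$ has a singularity at the origin, so one must make sure the functional $\Phi$ is still differentiable in $T$ and the dominated-convergence/differentiation-under-the-integral step is legitimate near $x=0$; the hypothesis $\int|x|^{p-2}\,d\mu<\infty$ is the right thing to control this, but it needs to be fed carefully into the bound on the difference quotients (uniformly for $T$ in a neighborhood of $T_0$ with singular values bounded away from $0$ and $\infty$). A secondary, more cosmetic, point is fixing the exact homogeneity-normalization of the constraint so that the Lagrange multiplier computation cleanly yields "scalar" rather than "scalar times something"; using the scale-invariant ratio $\Phi(T)/(\det T)^{p/n}$ sidesteps this. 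Everything else — compactness of sublevel sets, continuity, the Euler--Lagrange computation — is routine.
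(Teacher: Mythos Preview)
Your proposal is correct and matches the paper's approach: the paper states that the $L^p$-isotropic position arises from minimizing $\int_{\Rn}|Sx|^p\,d\mu$ over $S\in SL_n$, citing the extremal-problem framework of Giannopoulos--Milman, with the full argument deferred to the thesis. Your outline---reduction to positive-definite $T$, coercivity from the full-support hypothesis, compactness of sublevel sets, and the Euler--Lagrange computation yielding that $\Cov_p$ is scalar, with the $\int|x|^{p-2}\,d\mu<\infty$ hypothesis controlling differentiability near the origin when $p<2$---is exactly the ``fairly standard argument'' the paper alludes to.
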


Positions such as these are known to arise from extremal problems \cite{GM}. A fairly standard argument shows that the \Lpiso\ position results from minimizing the $L^p$ norm $\intRn |Sx|^p\ d\mu$ over all $S\in SL_n$. For a proof of Lemma \ref{existence}, see \cite[Chapter 4]{thesis}.
Next, a uniqueness property of the \Lpiso\ position. The group of orthogonal transformations is denoted as $O_n$.
\begin{lemma}[Uniqueness]\label{uniqueness}
	Let $\mu$ be an \Lpiso\ probability measure on $\Rn$ with $0<p<4$, and let $T\in GL_n$. Then  $T_*\mu$ is \Lpiso\ if and only if $T\in O_n$.
\end{lemma}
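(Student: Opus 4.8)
The easy direction is clear: if $T\in O_n$ then $|Tx|=|x|$ for all $x$, so $\Cov_p(T_*\mu)=\int Tx\otimes Tx\,|Tx|^{p-2}\,d\mu = T\big(\int x\otimes x\,|x|^{p-2}\,d\mu\big)T^{\mathsf t} = Z_{p,\mu}\,TT^{\mathsf t}=Z_{p,\mu}\Id$, and $Z_{p,T_*\mu}=Z_{p,\mu}$, so $T_*\mu$ is again $L^p$-isotropic. The content is the converse. First I would invoke Lemma \ref{existence} (existence of the $L^p$-isotropic position) to recall that the $L^p$-isotropic position is characterized as a minimizer of $\Phi(S):=\int_{\Rn}|Sx|^p\,d\mu(x)$ over $S\in SL_n$; more precisely, $\mu$ itself being $L^p$-isotropic means $\Id$ is a critical point of $\Phi$. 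So the plan is to show that when $0<p<4$ this variational problem has a unique minimizer up to orthogonal transformations, equivalently that $\Phi$ restricted to $SL_n$ is strictly convex along geodesics modulo the $O_n$-action — or more simply that any two critical points differ by an orthogonal map.

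The cleanest route is a convexity argument on the positive-definite cone. Write $A=T^{\mathsf t}T\succ0$; then $|Tx|^p=(x^{\mathsf t}Ax)^{p/2}$, so define $\Psi(A):=\int_{\Rn}(x^{\mathsf t}Ax)^{p/2}\,d\mu(x)$ for $A$ in the cone of positive-definite matrices. The $L^p$-isotropy of $\mu$ says $\nabla\Psi(\Id)=\tfrac{p}{2}\int (x^{\mathsf t}x)^{p/2-1}\,x\otimes x\,d\mu=\tfrac{p}{2}Z_{p,\mu}\Id$, i.e. $\nabla\Psi(\Id)$ is a multiple of $\Id$; and $T_*\mu$ being $L^p$-isotropic translates to $\nabla\Psi(A^{-1})$ being a multiple of $\Id^{?}$ — one must be careful here and track exactly how $\Cov_p(T_*\mu)=Z_{p,T_*\mu}\Id$ reads in terms of $A$; pushing forward, $\Cov_p(T_*\mu)=\int Tx\otimes Tx\,|Tx|^{p-2}\,d\mu = T\,M(A)\,T^{\mathsf t}$ where $M(A)=\int x\otimes x\,(x^{\mathsf t}Ax)^{p/2-1}\,d\mu$, and $Z_{p,T_*\mu}=\int(x^{\mathsf t}Ax)^{p/2-1}\,d\mu=\Tr M(A)/n\cdot n$... so the isotropy condition becomes $T\,M(A)\,T^{\mathsf t}$ is scalar, i.e. $M(A)$ is proportional to $A^{-1}$. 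Since $M(A)=\tfrac{2}{p}\nabla\Psi(A)$, both positions being $L^p$-isotropic gives $\nabla\Psi(\Id)\parallel\Id$ and $\nabla\Psi(A)\parallel A^{-1}$. Now the key analytic input: for $0<p<4$ the function $A\mapsto\Psi(A)$ is \emph{geodesically convex} on the positive-definite cone with respect to the affine-invariant metric (equivalently, $t\mapsto\Psi(\Id^{1/2}(\Id^{-1/2}A\Id^{-1/2})^{t}\Id^{1/2})$ is convex in $t$), because $s\mapsto(x^{\mathsf t}e^{sB}x)^{p/2}$ has the requisite convexity/log-convexity properties exactly in the range $p<4$. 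Combining geodesic convexity with the two criticality conditions (each says the derivative of $\Psi$ along the geodesic from $\Id$ to $A$ vanishes at the respective endpoint up to the scaling that is absorbed by normalizing to $SL_n$), one concludes the geodesic is constant, i.e. $A=\Id$, i.e. $T\in O_n$.

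The main obstacle is pinning down precisely why $p<4$ is the correct threshold and making the geodesic-convexity/strict-convexity statement airtight rather than heuristic. Concretely, for a single vector $x$ and a symmetric $B$, set $g(s)=\big(x^{\mathsf t}\exp(sB)\,x\big)^{p/2}$; one checks $g$ is log-convex for $p$ in a certain range and convex in a possibly larger range, and the borderline $p=4$ arises from the computation of $g''$ — for $p\le4$ convexity holds for every $x$ and $B$, while for $p>4$ it can fail, which is exactly why the lemma is stated with $0<p<4$ (and also why, as the paper notes, $p=3$ and the third-moment problem sit inside the admissible range). I would first do this scalar computation carefully, then integrate against $\mu$ to get convexity of $\Psi$ along geodesics, then upgrade to \emph{strict} convexity transverse to the trivial directions using the hypothesis that the support of $\mu$ is not contained in a hyperplane (inherited, since an $L^p$-isotropic measure cannot be supported in a hyperplane — otherwise $\Cov_p$ would be singular, contradicting $\Cov_p=Z_{p,\mu}\Id\succ0$). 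With strict convexity in hand, two distinct critical points on the same geodesic are impossible, forcing $A=\Id$ and completing the proof. A fully detailed version, as the paper's pattern suggests, would likely be relegated to \cite[Chapter 4]{thesis}, but the convexity-of-$\Psi$-for-$p<4$ step is the crux and deserves to be spelled out.
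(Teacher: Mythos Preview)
Your variational approach is correct in outline and genuinely different from the paper's. The paper gives a one-paragraph elementary argument: after a dilation reduce to $S\in SL_n$, use the polar decomposition to take $S$ positive-definite, diagonalize so that $S=\mathrm{diag}(\lambda_1,\dots,\lambda_n)$ with $\lambda_1\ge\cdots\ge\lambda_n$, and then simply compare the $(1,1)$ and $(n,n)$ entries of $\Covp(S_*\mu)$ --- both equal to some $\kappa_S$ --- using nothing beyond $\lambda_n|x|\le|Sx|\le\lambda_1|x|$. For $p\le 2$ this yields $\lambda_1^{p}\kappa\le\kappa_S\le\lambda_n^{p}\kappa$; for $2\le p<4$ the opposite pair of bounds on $|Sx|^{p-2}$ gives $\lambda_1^{2}\lambda_n^{p-2}\kappa\le\kappa_S\le\lambda_n^{2}\lambda_1^{p-2}\kappa$, i.e.\ $(\lambda_1/\lambda_n)^{4-p}\le 1$. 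Either way $\lambda_1=\lambda_n$. The barrier at $p=4$ is thus an artifact of this crude comparison, and the paper explicitly says that extending past it ``would require a more delicate argument which eludes us.''

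Your route does not have that barrier, and your assertion that for $p>4$ the pointwise convexity ``can fail'' is mistaken --- do the scalar computation you propose. With $h(s)=x^{\mathsf t}e^{sB}x=\sum_i a_i e^{s\mu_i}$ ($a_i\ge 0$, $\mu_i$ the eigenvalues of $B$) one has $(h')^2\le h\,h''$ by Cauchy--Schwarz, so $h$ is log-convex and hence $g=h^{p/2}$ is log-convex, in particular convex, for \emph{every} $p>0$. Moreover
\[
g''(0)=\tfrac{p}{2}\,|x|^{p-4}\Big[(\tfrac{p}{2}-1)(x^{\mathsf t}Bx)^2+|x|^2|Bx|^2\Big]
\]
vanishes only when $Bx=0$, so $\Psi$ is \emph{strictly} geodesically convex along $s\mapsto e^{sB}$ whenever $B\ne 0$: an $L^p$-isotropic $\mu$ cannot be supported in the proper subspace $\ker B$ since $\Covp(\mu)=Z_{p,\mu}\Id$ is nonsingular. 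Two critical points of a strictly convex function must coincide, so $A=T^{\mathsf t}T=\Id$; the leftover dilation is then forced to be trivial by the normalization $\Covp=Z_{p,\mu}\Id$, exactly as the paper notes. In short, the paper's argument is shorter and uses no analysis, at the price of the range restriction; your argument is more conceptual and in fact settles the case $p\ge 4$ the paper leaves open --- you should not hunt for a $p<4$ threshold in your convexity step, because there isn't one.
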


It is immediate that any orthogonal image of an \Lpiso\ measure is \Lpiso\ as well. The converse is proven using the following claim.
\begin{unclaim}\label{uniq.claim}
	Let $\mu$ be a Borel probability measure on $\Rn$ and let $0<p<4$. If both $\Covp(\mu)$ and $\Covp(S_*\mu)$ are scalar for $S\in SL_n$, then $S\in O_n$.
\end{unclaim}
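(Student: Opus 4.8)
The plan is to diagonalize the situation and reduce to a one–variable convexity argument. Since $\Cov_p(S_*\mu)=\int (Sx)\otimes(Sx)\,|Sx|^{p-2}\,d\mu = S\big(\int x\otimes x\,|Sx|^{p-2}\,d\mu\big)S^{T}$, the hypothesis that this matrix is scalar says that $S M S^{T}$ is scalar, where $M=M(S):=\int x\otimes x\,|Sx|^{p-2}\,d\mu$. Write $A=S^{T}S$, a positive–definite matrix with $\det A=1$ (as $S\in SL_n$), and note $|Sx|^{2}=\langle Ax,x\rangle$. First I would pass to the orthogonal basis diagonalizing $A$, so that $A=\mathrm{diag}(\lambda_1,\dots,\lambda_n)$ with $\prod\lambda_i=1$ and all $\lambda_i>0$; the goal becomes to show $\lambda_1=\dots=\lambda_n=1$, i.e.\ $A=\Id$, which is equivalent to $S\in O_n$.

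The key step is to extract a scalar identity from the two scalarity hypotheses. Because $\Cov_p(\mu)$ is scalar, $\int x_i^{2}\,|x|^{p-2}\,d\mu$ is independent of $i$ and $\int x_i x_j |x|^{p-2}d\mu=0$ for $i\neq j$; because $S M S^{T}$ is scalar and $S^{T}S=A$ is diagonal in our basis, a short computation shows $M$ itself must be diagonal and $\lambda_i M_{ii}$ independent of $i$, i.e.\ there is a constant $\tau>0$ with
\begin{equation*}
\int x_i^{2}\,\Big(\textstyle\sum_{k}\lambda_k x_k^{2}\Big)^{\frac{p-2}{2}}\,d\mu \;=\; \frac{\tau}{\lambda_i},\qquad i=1,\dots,n.
\end{equation*}
Now consider the function $g(t)=\int\big(\sum_k t_k x_k^{2}\big)^{p/2}\,d\mu$ on $\Rplus^{n}$ (or on the slice $\prod t_k=1$). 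For $0<p<4$ one has $0<p/2<2$, and $u\mapsto u^{p/2}$ is such that $\partial_i g(t)=\frac p2\int x_i^{2}(\sum_k t_k x_k^{2})^{p/2-1}d\mu$; the identity above says exactly that at $t=(\lambda_1,\dots,\lambda_n)$ we have $\lambda_i\,\partial_i g(\lambda)=\tfrac p2\tau$ for all $i$, which is the first–order (Lagrange) condition for $t\mapsto g(t)$ restricted to $\{\prod t_k=1\}$ to be critical at $\lambda$. The same identity at $t=(1,\dots,1)$ (using that $\Cov_p(\mu)$ is scalar) shows $\mathbf 1$ is also a critical point. So the whole claim reduces to showing this constrained critical point is \emph{unique}, and for that I would use strict convexity: writing $t_k=e^{s_k}$ with $\sum s_k=0$, the function $s\mapsto g(e^{s_1},\dots,e^{s_n})$ is convex (it is $\int$ of $(\sum e^{s_k}x_k^{2})^{p/2}$, and $s\mapsto\sum e^{s_k}x_k^{2}$ is convex while $u\mapsto u^{p/2}$ is nondecreasing — wait, $u^{p/2}$ is concave for $p<2$), so I must instead work directly with the exponent: the map $(s_1,\dots,s_n)\mapsto \log\big(\sum_k e^{s_k}x_k^{2}\big)$ is convex, hence $\big(\sum_k e^{s_k}x_k^{2}\big)^{p/2}=\exp\big(\tfrac p2\log\sum_k e^{s_k}x_k^{2}\big)$ is convex as the composition of an increasing convex function with a convex one, and therefore $g\circ\exp$ is convex on the hyperplane $\{\sum s_k=0\}$; strict convexity (hence uniqueness of the critical point) follows from the assumption that $\mathrm{supp}\,\mu$ spans $\Rn$, which guarantees that for $s\neq s'$ the integrand is not a.e.\ affine in the segment. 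Concluding, $\lambda=\mathbf 1$, so $A=S^{T}S=\Id$ and $S\in O_n$.

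The main obstacle I anticipate is the strict–convexity / uniqueness step: plain convexity of $g\circ\exp$ only gives that the set of constrained minimizers is convex, and one needs genuine \emph{strict} convexity transverse to the constraint to conclude that $\mathbf 1$ and $\lambda$ coincide. This is exactly where the hypotheses $0<p<4$ (equivalently $p/2<2$, which is what makes $u\mapsto u^{p/2}$ behave well enough — in fact one wants $p/2\le 2$ together with the $\log$–convexity trick, and $p<4$ is the sharp threshold since at $p=4$ the quantity becomes a genuine quadratic form in $t$ and degenerate directions appear) and the non-degeneracy of $\mathrm{supp}\,\mu$ must be used: if $\mu$ were supported on a coordinate subspace the integrand would be independent of some $s_k$ and uniqueness would fail. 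I would handle this by checking that the Hessian of $s\mapsto g(e^{s})$ restricted to $\{\sum s_k=0\}$ is positive definite at every point, using that $\log(\sum e^{s_k}x_k^2)$ has a strictly positive–definite Hessian in the relevant directions for $\mu$-a.e.\ $x$ with enough nonzero coordinates, which the spanning hypothesis supplies.
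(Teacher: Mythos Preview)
Your variational approach is correct and takes a genuinely different route from the paper's. After the same reduction to a positive diagonal situation, the paper proceeds by a bare-hands comparison of diagonal entries: writing $\Covp(\mu)=\kappa\,\Id$, $\Covp(S_*\mu)=\kappa_S\,\Id$ and using the crude bounds $\lambda_n|x|\le|Sx|\le\lambda_1|x|$, it compares $\kappa_S=\lambda_1^2\int x_1^2|Sx|^{p-2}d\mu$ against $\kappa_S=\lambda_n^2\int x_n^2|Sx|^{p-2}d\mu$ and extracts (after the appropriate case split $p\le2$ versus $2\le p<4$) the inequality $(\lambda_1/\lambda_n)^{4-p}\le1$; it is exactly the exponent $4-p$ that creates the barrier at $p=4$, which the paper explicitly notes it cannot cross. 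Your route instead recognizes both $\mathbf 1$ and $\lambda=(\lambda_1,\dots,\lambda_n)$ as constrained critical points of $g(t)=\int(\sum_k t_kx_k^2)^{p/2}d\mu$ on $\{\prod t_k=1\}$ and appeals to strict convexity of $h(s)=g(e^{s})$ on $\{\sum s_k=0\}$. Two remarks are worth making. First, your worry about where $p<4$ enters is misplaced: the convexity you set up is in fact \emph{strict for every} $p>0$. With $u=\sum_k e^{s_k}x_k^2$, $u'=\sum_k v_k e^{s_k}x_k^2$, $u''=\sum_k v_k^2 e^{s_k}x_k^2$ one has $\partial_v^2\,u^{p/2}=\tfrac{p}{2}u^{p/2-2}\big[(\tfrac{p}{2}-1)(u')^2+uu''\big]$, and Cauchy--Schwarz $(u')^2\le uu''$ makes the bracket $\ge\tfrac{p}{2}\min\{(u')^2,uu''\}\ge0$ with equality only when $u''=0$, i.e.\ when $v_k=0$ for every $k$ with $x_k\neq0$; so your argument actually \emph{removes} the paper's $p<4$ restriction. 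Second, the spanning hypothesis you invoke is not stated in the Claim but is automatic once one excludes the degenerate case $\kappa=0$: from $\Covp(\mu)=\kappa\,\Id$ one gets $\int(x\cdot\theta)^2|x|^{p-2}d\mu=\kappa>0$ for every $\theta\in\Sn$, so $\mu$ lies in no hyperplane, and in particular in no coordinate hyperplane of the eigenbasis of $A$.
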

\begin{proof}
	Apply an orthogonal transformation making $S$ positive definite, then diagonalize $S$ to have decreasing positive diagonal elements, and denote those by $\lambda_1\geq ...\geq \lambda_n>0$. Write $\Covp(\mu)=\kappa \Id$ and $\Covp(S_*\mu)=\kappa_S \Id$, and note that $\lambda_n|x| \leq |Sx|\leq \lambda_1|x|$ for all $x\in \Rn$. Assume $p\leq2$. Then
	\begin{equation*}
	\kappa_S \ = \ \Cov(S_*\mu)_{11} \ = \  \int x_1^2\ |x|^{p-2} \ dS_*\mu \ = \ \lambda_1^2\int x_1^2\ |Sx|^{p-2} \ d\mu\ \geq \ \lambda_1^p\kappa.
	\end{equation*}
	On the other hand,
	\begin{equation*}
	\ \kappa_S \ = \ \Cov(S_*\mu)_{nn} \ = \ \int x_n^2 \ |x|^{p-2} \ dS_*\mu \ = \ \lambda_n^2 \int x_n^2 \ |Sx|^{p-2} \ d\mu \ \leq \ \lambda_n^p\kappa.
	\end{equation*}
	Hence $\lambda_1^p\leq \lambda_n^p$. When $p>0$, this entails $\lambda_n\leq \lambda_1$ and so $S$ is scalar, hence $S$ is the identity matrix as $S\in SL_n$. The exact reverse argument is applicable to the case $2\leq p < 4$.
\end{proof}
Given two \Lpiso\ positions dilate one to use the Claim, then the normalization $\Cov_p(\mu)=Z_{p,\mu}\Id$ necessitates the dilation was trivial and Lemma \ref{uniqueness} is proven. We note that our approach here is inherently applicable to the range $0<p<4$ only. Extending the uniqueness property to $p>4$ would require a more delicate argument which eludes us, and we were not able to find in the literature. Here is simple a corollary of existence and uniqueness.
\begin{cor}\label{cor.uniqueness}
	Let $\mu$ be an \Lpiso\ probability measure on $\Rn$ with $0<p<4$. Then for any $S\in SL_n$ we have
	\begin{equation*}
	\intRn |x|^p \ d\mu \ \leq \ \intRn |x|^p \ dS_*\mu.
	\end{equation*}
\end{cor}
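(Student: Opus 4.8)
The plan is to realize the hypothesis (that $\mu$ is already $L^p$-isotropic) as an extremal position: I will show that the functional $\Phi(S):=\intRn|Sx|^p\,d\mu(x)=\intRn|x|^p\,dS_*\mu$ attains its minimum over $S\in SL_n$ at an orthogonal matrix, which immediately gives $\intRn|x|^p\,d\mu=\Phi(\Id)\le\Phi(S)=\intRn|x|^p\,dS_*\mu$ for every $S\in SL_n$. As a preliminary reduction, if $\intRn|x|^p\,d\mu=\infty$ then $|Sx|\ge\sigma_{\min}(S)|x|$ forces $\intRn|x|^p\,dS_*\mu=\infty$ for all $S\in GL_n$ and the inequality is trivial; so we may assume $\intRn|x|^p\,d\mu<\infty$, which makes $\Phi$ finite and continuous on $SL_n$.

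First I would prove that $\Phi$ is coercive on $SL_n$. Using a singular value decomposition $S=UDV$ with $D=\mathrm{diag}(s_1\ge\cdots\ge s_n)$ and $\prod_i s_i=1$, one has $\Phi(S)\ge s_1^p\intRn|x\cdot w|^p\,d\mu(x)$ where $w$ is the first row of $V$ (a unit vector) and $s_1=\opnorm{S}$. Now $\theta\mapsto\intRn|x\cdot\theta|^p\,d\mu$ is continuous on $\Sn$ (dominated convergence, the integrand being dominated by $|x|^p$) and strictly positive — a zero at some $\theta_0$ would put $\mathrm{supp}\,\mu$ inside $\theta_0^\perp$, contradicting $\Cov_p(\mu)=Z_{p,\mu}\Id$ with $Z_{p,\mu}\neq0$ — so its minimum $m$ over $\Sn$ is positive and $\Phi(S)\ge m\,\opnorm{S}^p$. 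Hence the sublevel sets of $\Phi$ are bounded in $M_n(\R)$ and closed (by continuity of $\Phi$ and of $\det$), therefore compact, and a minimizer $S_0\in SL_n$ exists. (This is essentially the argument behind Lemma \ref{existence}.)

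Next I would extract the first-order condition at $S_0$. The tangent space of $SL_n$ at $S_0$ is $\{BS_0:\Tr B=0\}$. For fixed $x\neq0$ the map $S\mapsto|Sx|^p$ is $C^1$ near $S_0$ inside $GL_n$, with derivative dominated there by a constant multiple of $|x|^p$ (bounding $|Sx|^{p-1}|x|$ by $\opnorm{S}^{p-1}|x|^p$ when $p\ge1$ and by $\sigma_{\min}(S)^{p-1}|x|^p$ when $p<1$), so differentiation under the integral sign is legitimate; pushing forward to $\nu:=(S_0)_*\mu$ and using $y\cdot By=\Tr(B\,(y\otimes y))$ gives $\tfrac{d}{dt}\big|_{0}\Phi(e^{tB}S_0)=p\,\Tr\!\big(B\,\Cov_p(\nu)\big)$. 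Minimality forces this to vanish for every traceless $B$, i.e.\ $\Cov_p((S_0)_*\mu)$ is a scalar matrix.

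To conclude: $\mu$ is $L^p$-isotropic, so $\Cov_p(\mu)$ is also scalar; since $0<p<4$ and $S_0\in SL_n$, the Claim proved above (within the proof of Lemma \ref{uniqueness}) yields $S_0\in O_n$, whence $\Phi(S_0)=\intRn|S_0x|^p\,d\mu=\intRn|x|^p\,d\mu=\Phi(\Id)$; as $S_0$ minimizes $\Phi$ over $SL_n$ and $\Id\in SL_n$, the claimed inequality follows. The main obstacle is the middle of the argument — rigorously establishing coercivity (hence existence of the minimizer) and justifying the interchange of derivative and integral, which requires care when $p<2$ since $|y|^{p-2}$ is singular at the origin and $\mu$ need not be absolutely continuous; once $\Cov_p((S_0)_*\mu)$ is known to be scalar, the Claim finishes the proof with essentially no additional work.
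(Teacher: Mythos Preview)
Your proposal is correct and follows essentially the same route as the paper: the paper remarks that the $L^p$-isotropic position arises from minimizing $S\mapsto\intRn|Sx|^p\,d\mu$ over $SL_n$ (this is the content behind Lemma~\ref{existence}, whose proof is deferred to the thesis), and then observes that Corollary~\ref{cor.uniqueness} is an immediate consequence of this extremal characterization together with the uniqueness Claim. You have simply written out in full the coercivity/first-order-condition argument that the paper leaves implicit, and then invoked the Claim exactly as the paper does.
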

\nline
We conclude our discussion of the \Lpiso\ normalization with a Lemma relating the \Lpiso\ position to the isotropic one, in the case where the measure is log-concave. In a sense that is meaningful to us, the two positions are not ``too far" apart.
\begin{lemma}[Proximity to the isotropic position]\label{proximity}
	Let $\mu$ be an \Lpiso\ log-concave probability measure on $\Rn$, $n\geq C'$, and let $1\leq p \leq 2$. If $T\in GL_n$ is such that $T_*\mu$ is isotropic then
	\begin{equation*}
	\|T^{-1}\|_{\text{op}},\ \|T\|_{\text{op}} \ \in \ [c, C].
	\end{equation*}
	Here, $c,C,C'>0$ are universal constants.
\end{lemma}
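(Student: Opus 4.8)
The plan is to translate the conclusion into a statement about the eigenvalues of $\Sigma:=\Cov(\mu)$. If $T_*\mu$ is isotropic then $T\Sigma T^{*}=\Id$, so (up to the harmless left $O_n$-ambiguity in $T$, which does not affect operator norms) $\opnorm{T}=\lambda_{\min}(\Sigma)^{-1/2}$ and $\opnorm{T^{-1}}=\lambda_{\max}(\Sigma)^{1/2}$; thus the assertion is equivalent to saying that every eigenvalue of $\Sigma$ lies between two universal constants. Writing $r^{2}:=\tfrac1n\Tr\Sigma=\tfrac1n\int|x|^{2}\,d\mu$, I would establish in turn (i) $\lambda_{\max}(\Sigma)\lesssim r^{2}$, (ii) $\lambda_{\min}(\Sigma)\gtrsim r^{2}$, and (iii) $r^{2}\asymp1$. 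Throughout one uses only $L^{p}$-isotropy together with Borell's lemma and its standard corollaries for the log-concave $\mu$ (see \cite{BGVV}): for every unit $\theta$, $\int(x\cdot\theta)^{4}\,d\mu\asymp\big(\int(x\cdot\theta)^{2}\,d\mu\big)^{2}$; $\int|x|^{p}\,d\mu\asymp\big(\int|x|^{2}\,d\mu\big)^{p/2}=(nr^{2})^{p/2}$, whence taking traces in $\Covp(\mu)=Z_{p,\mu}\Id$ gives $Z_{p,\mu}=\tfrac1n\int|x|^{p}\,d\mu\asymp n^{p/2-1}r^{p}$; and the median $m$ of $|x|$ satisfies $m\asymp\sqrt{n}\,r$ with $\mu(\{|x|>Km\})\lesssim e^{-cK}$.

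For (i), fix a unit $\theta$ and set $\lambda(\theta)=\langle\Sigma\theta,\theta\rangle$. $L^{p}$-isotropy reads $\int\tfrac{(x\cdot\theta)^{2}}{|x|^{2-p}}\,d\mu=\langle\Covp(\mu)\theta,\theta\rangle=Z_{p,\mu}$. Since $|x|^{2-p}\leq L^{2-p}$ on $\{|x|\leq L\}$, this forces $Z_{p,\mu}\geq L^{-(2-p)}\int_{\{|x|\leq L\}}(x\cdot\theta)^{2}\,d\mu\geq L^{-(2-p)}\big(\lambda(\theta)-\int_{\{|x|>L\}}(x\cdot\theta)^{2}\,d\mu\big)$. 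The tail term is, by Cauchy--Schwarz, $\int(x\cdot\theta)^{4}\,d\mu\asymp\lambda(\theta)^{2}$ and the exponential tail of $|x|$, at most $C\lambda(\theta)\,e^{-cL/m}$, which is $\leq\tfrac12\lambda(\theta)$ once $L=Km$ for a large universal $K$. Rearranging, $\lambda(\theta)\leq2(Km)^{2-p}Z_{p,\mu}\asymp(\sqrt{n}\,r)^{2-p}\cdot n^{p/2-1}r^{p}=r^{2}$, uniformly in $\theta$.

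For (ii), from (i) and $\Tr\Sigma=nr^{2}$ at least $cn$ eigenvalues of $\Sigma$ exceed $r^{2}/2$; let $E$ be the span of their eigenvectors, so $\dim E\geq cn$. Writing $X=\Sigma^{1/2}Z$ with $Z$ isotropic log-concave one has $|X|^{2}\geq\tfrac{r^{2}}{2}|P_{E}Z|^{2}$, with $P_{E}Z$ isotropic log-concave in $E$, so Paouris' small-ball inequality in dimension $\dim E\geq cn$ (see \cite{BGVV}) gives $\mu(\{|x|<c_{0}\sqrt{n}\,r\})\leq e^{-c'\sqrt{n}}$ for a small universal $c_{0}$, which is $\lesssim 1/n$ once $n\geq C'$. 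Feeding this into $L^{p}$-isotropy, $Z_{p,\mu}=\int\tfrac{(x\cdot\theta)^{2}}{|x|^{2-p}}\,d\mu\leq(c_{0}\sqrt{n}\,r)^{-(2-p)}\lambda(\theta)+\int_{\{|x|<c_{0}\sqrt{n}\,r\}}|x|^{p}\,d\mu\leq(c_{0}\sqrt{n}\,r)^{-(2-p)}\lambda(\theta)+(c_{0}\sqrt{n}\,r)^{p}\,\mu(\{|x|<c_{0}\sqrt{n}\,r\})$, and the last summand is $\leq\tfrac12Z_{p,\mu}$; hence $\lambda(\theta)\geq\tfrac12(c_{0}\sqrt{n}\,r)^{2-p}Z_{p,\mu}\asymp r^{2}$. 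Together with (i), $\Sigma\asymp r^{2}\Id$. I expect this to be the main obstacle: the lower bound on $\lambda_{\min}$ (and, in (iii), the scale) genuinely requires a small-ball estimate, and since $\mu$ is only $L^{p}$-isotropic, not isotropic, it has to be produced by this bootstrap --- first controlling $\lambda_{\max}$, then invoking the isotropic small-ball inequality on the subspace of the large eigenvalues. (A purely variational argument from the extremal characterisation of the $L^{p}$-isotropic position, Corollary~\ref{cor.uniqueness}, only yields that the arithmetic and geometric means of the $\lambda_i$ are comparable, which does not suffice.)

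For (iii), rescale $\mu$ by $1/r$ to obtain a log-concave $\bar\mu$ with $\Cov(\bar\mu)\asymp\Id$, to which the same small-ball bound applies; integrating it dyadically (using $2-p\in[0,1]$) gives $\int|y|^{p-2}\,d\bar\mu\lesssim n^{(p-2)/2}$, while Jensen gives the reverse inequality, so $Z_{p,\mu}=r^{p-2}\int|y|^{p-2}\,d\bar\mu\asymp r^{p-2}n^{(p-2)/2}$. Comparing with $Z_{p,\mu}\asymp n^{p/2-1}r^{p}=n^{(p-2)/2}r^{p}$ forces $r^{2}\asymp1$, so every eigenvalue of $\Sigma$ is $\asymp1$ and $\opnorm{T},\opnorm{T^{-1}}\in[c,C]$. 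All constants stay uniform over $p\in[1,2]$ (at $p=2$ the whole chain degenerates to $\Sigma=\Id$), which matches the statement.
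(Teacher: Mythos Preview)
Your argument is correct and takes a genuinely different route from the paper's. You recast the problem in terms of the eigenvalues of $\Sigma=\Cov(\mu)$: first bound $\lambda_{\max}(\Sigma)\lesssim r^2$ by a truncation-plus-tail argument using only Borell's lemma, then bootstrap via Paouris' small-ball inequality on the span of the large eigenvalues to obtain $\lambda_{\min}(\Sigma)\gtrsim r^2$, and finally pin down the scale $r^2\asymp 1$ by comparing the two expressions $Z_{p,\mu}\asymp n^{p/2-1}r^{p}$ and $Z_{p,\mu}\asymp r^{p-2}n^{(p-2)/2}$.

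The paper proceeds in essentially the reverse order and with different tools. It first secures $Z_{p,\mu}\gtrsim n^{p/2-1}$ directly from the extremal characterisation of the $L^p$-isotropic position (Corollary~\ref{cor.uniqueness}) combined with Borell; then it bounds $\opnorm{T}$ via Cauchy--Schwarz together with Paouris' negative-moment inequality~\eref{paouris}; and for $\opnorm{T^{-1}}$ it only needs an elementary Chebyshev truncation on the already isotropic measure $T_*\mu$ (Claim~\ref{app.proximity.claim3}), not the full small-ball theorem. In other words, where you manufacture a small-ball estimate for $\mu$ itself by the subspace bootstrap, the paper simply changes variables to the isotropic side, where $\int(x\cdot\theta)^2\,dT_*\mu=1$ is available for free and a crude truncation suffices. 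Conversely, your argument avoids the variational input entirely, and your parenthetical remark is accurate: minimality of $\int|x|^p$ over $SL_n$ together with Borell gives only $(\prod\lambda_i)^{1/n}\asymp\tfrac1n\sum\lambda_i$, which by itself does not control individual eigenvalues --- the paper, too, has to supplement it with the Paouris input. Both proofs ultimately rest on the same reverse-H\"older and Paouris-type facts for log-concave measures; what differs is the bookkeeping and the order in which the two operator-norm bounds are obtained.
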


The proof appears in Appendix \ref{app.proximity}, and is based on ``reverse Hölder"-type inequalities of the sort that are available for log-concave measures. Specifically, if $\mu$ is an a.c. log-concave probability measure on $\Rn$ and $f:\Rn\to\R$ is a semi-norm, then 
\begin{equation}\label{holder}
\bigg(\intRn |f|^q \ d\mu \bigg)^{1/q} \ \leq \ C\frac{q}{p}\bigg(\intRn |f|^p \ d\mu \bigg)^{1/q}
\end{equation}
whenever $1\leq p < q$. Similarly, an equivalence-of-moments result involving negative powers was proven by G. Paouris, according to which 
\begin{equation}\label{paouris}
\bigg(\intRn |x|^{-k} \ d\mu\bigg)^{1/k} \ \leq \ C_k\bigg(\intRn |x|^k \ d\mu\bigg)^{-1/k}
\end{equation}
whenever $\mu$ is an isotropic log-concave probability measure on $\Rn$ and $1\leq k \leq c\sqrt{n}$ is an integer (see \cite[Theorems 2.4.6 and 5.3.2]{BGVV}). It is important to note that as long as the values of $p,q,k$ used are bounded by some fixed value, inequalities \eref{holder}, \eref{paouris} can be formulated with constants that do not depend on $p,q,k$. In this case, combining with the reverse-order inequalities derived from Jensen's inequality, inequalities \eref{holder}, \eref{paouris} may be restated as 
\begin{equation*}
\bigg(\int |f|^q \ d\mu\bigg)^{\oo{q}} \ \simeq \ \bigg(\int |f|^p \ d\mu\bigg)^{\oo{p}} \tab;\tab \bigg(\int |x|^{-k} \ d\mu\bigg)^{\oo{k}} \ \simeq \ \bigg(\int |x|^k\ d\mu\bigg)^{-\oo{k}}
\end{equation*}
having adopted a notation where $A\simeq B$ means $cA\leq B\leq CA$ for some universal constants $c,C>0$. Similarly, we write $A\sleq B$ when $A\leq CB$. We are now in position to prove Proposition \ref{sect4lemma}.

\begin{proof}[Proof of Proposition \ref{sect4lemma}]
	Assume that $\mu$ is \Loneiso, and let $T\in GL_n$ be such that $T_*\mu$ is isotropic. By Lemma \ref{proximity} we know that $|Tx| \simeq |x| \simeq |T^{-1}x|$
	for all $x\in\Rn$. Therefore,
	\begin{equation*}
	nZ_{1,\mu} \ = \ \int |x| \ d\mu \ = \ \int |T^{-1}x| \ dT_*\mu \ \simeq \ \int |x| \ dT_*\mu \ \simeq \ \sqrt{n}
	\end{equation*} 
	since $T_*\mu$ is isotropic. As for the second assertion of the Lemma, 
	\begin{align*}
	\int \frac{(x\cdot y)^4}{|x|^3} \ d\mu(x) \ &\leq \ \sqrt{\int (x\cdot y)^8\ d\mu(x)}\cdot \sqrt{\int\frac{d\mu}{|x|^6}}
	\\&= \ \sqrt{\int (x\cdot T^{-1,*}y)^8 \ dT_*\mu(x)}\cdot\sqrt{\int\frac{dT_*\mu}{|T^{-1}x|^6}}
	\\& \simeq \bigg(\int (x\cdot T^{-1,*}y)^2 \ dT_*\mu(x)\bigg)^2\cdot\sqrt{\int\frac{dT_*\mu}{|x|^6}}
	\\&\simeq |T^{-1,*}y|^4/n^{3/2} \ \simeq \ |y|^4/n^{3/2}
	\end{align*}
	by applying inequalities \eref{holder} and \eref{paouris}. Finally,
	\begin{equation*}
	\iint \frac{(x\cdot y)^4}{|x|^3|y|^3} \ d\mu(x) \ d\mu(y) \ \sleq \ n^{-3/2}\int |y| \ d\mu \ \simeq \ n^{-1}.
	\end{equation*}
\end{proof}

\section{Log-concave measures}

We now wish to extend the variance bound obtained in Section 4 for log-concave measures, to the exponential tail bound described in Theorem \ref{thm5}. We obtain the tail bound via the equivalent bound on the  $\psi_1$ norm
\begin{equation}\label{expo}
\intSn e^{cn|F_\mu-\E F_\mu|} \ d\sign \ \leq \ 2
\end{equation}
with $c>0$ a universal constant. 
An argument for obtaining such bounds as \eref{expo} on the sphere was recently presented by Bobkov, Chistyakov and Götze \cite{BCG}, and requires controlling the second derivative. Namely, whenever $f:\Sn\to\R$ is a mean-zero $C^2$-smooth function with its spherical second derivative matrix admitting  
\begin{equation*}
\|f''_S(\theta)\|_{\text{op}} \ \leq \ 1
\end{equation*}
for any $\theta\in\Sn$, we have the exponential integral bound
\begin{equation}\label{bcg.exp}
\log\ \E\ e^{(n-1)f/2} \ \leq\ \E\ |\grad_S f|^2\cdot (n-1)/2.
\end{equation}
In the case that $f$ is defined on a neighborhood of the sphere, its spherical second derivative matrix may be defined by its relation to the Euclidean one
\begin{equation}\label{deriv2def}
f''_S(\theta) \ = \ \Pthperp(f''(\theta) - (\grad f(\theta)\cdot\theta)\Id)\Pthperp.
\end{equation}

Recall Proposition \ref{gradprop}, which together with Proposition \ref{lpcov} of Section 4 establishes for a log-concave measure in \Loneiso\ position the gradient bound
\begin{equation*}
\E\ |\grad_S F_\mu|^2 \ \leq \ C/n
\end{equation*}
with $C>0$ a universal constant. Combining this with inequality \eref{bcg.exp}, we will obtain the bound \eref{expo}, hence Theorem \ref{thm5}, once we prove the following.
\begin{prop}\label{propderiv2d}
	Let $\mu$ be an \Loneiso\ log-concave probability measure on $\Rn$, $n\geq C'$. Then $F_\mu$ is $C^2$-smooth and 
	\begin{equation*}
	\|(F_\mu)''_S(\theta)\|_{\text{op}} \ \leq C
	\end{equation*}
	for all $\theta\in\Sn$. Here $C,C'>0$ are universal constants.
\end{prop}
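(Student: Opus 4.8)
The plan is to compute the spherical Hessian of $F_\mu$ explicitly and then bound its operator norm using the reverse-H\"older and Paouris-type inequalities available for log-concave measures, exactly as in the proof of Proposition \ref{sect4lemma}. Recall from \eref{2ddef} that the $1$-homogeneous extension of $F_\mu$ to $\Rn\mz$ has Euclidean gradient $\grad F_\mu(\theta)=\int_{H_\theta}x\,d\mu(x)$. First I would differentiate once more: since $H_\theta=\{x:x\cdot\theta>0\}$, differentiating the indicator produces a surface term on the hyperplane $\theta^\perp$. Writing $\mu=e^{-H}\,dx$ (we may assume $\mu$ is a.c. log-concave, the general case reducing to it as remarked after Theorem \ref{thm5}), one obtains
\begin{equation*}
F_\mu''(\theta) \ = \ \int_{\theta^\perp} (x\otimes x)\, e^{-H(x)} \ d\mathcal{H}^{n-1}(x) \bigg/ |\theta|,
\end{equation*}
the integral being over the hyperplane through the origin orthogonal to $\theta$, against the induced $(n-1)$-dimensional density. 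By \eref{deriv2def}, $(F_\mu)''_S(\theta)=\Pthperp(F_\mu''(\theta)-(\grad F_\mu(\theta)\cdot\theta)\Id)\Pthperp$; since $x\in\theta^\perp$ already makes $x\otimes x$ live on $\theta^\perp$, the projection acts trivially on the first term, and one is left to bound the operator norm of the matrix $M_\theta:=\int_{\theta^\perp}(x\otimes x)e^{-H}\,d\mathcal{H}^{n-1}$ restricted to $\theta^\perp$, minus the scalar $\grad F_\mu(\theta)\cdot\theta$.

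The key point is therefore to show that, in the \Loneiso\ position, the restricted $(n-1)$-dimensional ``marginal covariance-type'' matrix $M_\theta$ has operator norm bounded by a universal constant, and similarly that the scalar $\grad F_\mu(\theta)\cdot\theta=\int_{H_\theta}(x\cdot\theta)\,d\mu$ is bounded by a universal constant. For the scalar piece: $\int_{H_\theta}(x\cdot\theta)\,d\mu\le\int|x\cdot\theta|\,d\mu=2F_\mu(\theta)$, and $F_\mu(\theta)\le(\int(x\cdot\theta)^2\,d\mu)^{1/2}$; after transporting to the isotropic position $T_*\mu$ via Lemma \ref{proximity} (so $|Tx|\simeq|x|$, $\|T\|_{\mathrm{op}},\|T^{-1}\|_{\mathrm{op}}\simeq 1$), one gets $\int(x\cdot\theta)^2\,d\mu=\int(x\cdot T^{-1,*}\theta)^2\,dT_*\mu=|T^{-1,*}\theta|^2\lesssim 1$. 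For the matrix piece the clean way is to compare $M_\theta$ against the full matrix by a one-dimensional slicing estimate: for a unit $v\in\theta^\perp$, $M_\theta v\cdot v=\int_{\theta^\perp}(x\cdot v)^2 e^{-H}\,d\mathcal{H}^{n-1}$, which one recognizes as (a constant times) the value at the origin of the density of the marginal of $\mu$ onto $\mathrm{span}\{v,\theta\}$ integrated appropriately — more precisely it equals the second moment of the one-dimensional log-concave density $t\mapsto\int_{\theta^\perp\cap v^\perp+tv}e^{-H}$ times its value at $0$; for log-concave densities in the isotropic normalization such ``density-at-a-point times variance'' quantities are universally bounded (this is the standard fact that the density of a one- or two-dimensional marginal of an isotropic log-concave measure at its near-maximum point is bounded, together with \eref{holder}). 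Transporting through $T$ as above and invoking \eref{holder} and \eref{paouris} to control the passage from the slice density to genuine moments of $\mu$ finishes the operator-norm bound.

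The main obstacle I anticipate is making the surface-integral computation of $F_\mu''$ rigorous and controlling the restricted matrix $M_\theta$ uniformly in $\theta$: one must justify differentiating under the integral sign twice when $\mu$ has merely a log-concave (not smooth, not compactly supported) density, and then one needs a genuinely uniform-in-$\theta$ bound on the value at the origin of $(n-1)$-dimensional hyperplane sections of $e^{-H}$. The boundedness of section densities of isotropic log-concave measures is known (Hensley / Ball-type estimates), but I would route the argument through the quadratic-form identity $M_\theta v\cdot v = c\int(x\cdot v)^2\mathbf{1}_{\{x\cdot\theta=0\}}$-type expression rewritten as a limit $\lim_{\varepsilon\to 0}\frac{1}{2\varepsilon}\int_{|x\cdot\theta|<\varepsilon}(x\cdot v)^2\,d\mu$, so that everything is expressed directly in terms of $\mu$ and the reverse-H\"older tools \eref{holder}, \eref{paouris} and Lemma \ref{proximity} apply verbatim, avoiding any appeal to section-density literature beyond what the log-concavity of slices gives for free. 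The remaining verification that $F_\mu$ is genuinely $C^2$ rather than merely twice differentiable a.e. uses absolute continuity of $\mu$ and continuity of the slice integrals in $\theta$, which is routine and can be relegated to \cite[Chapter 2]{thesis} as elsewhere in the paper.
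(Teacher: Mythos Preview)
Your structural reduction matches the paper's exactly: both split $\|(F_\mu)''_S(\theta)\|_{\text{op}}$ into the scalar piece $\grad F_\mu(\theta)\cdot\theta=F_\mu(\theta)$ (bounded directly from the normalization) and the quadratic form $M_\theta v\cdot v=\int_{\theta^\perp}(x\cdot v)^2\rho\,d\mathcal{H}^{n-1}$ for $v\in\theta^\perp$, and both recognize --- you implicitly, the paper explicitly --- that this is a two-dimensional quantity, namely $M_\theta v\cdot v=\int_{-\infty}^{\infty} t^2\,\omega_{P_*\mu}(tv)\,dt$ where $\omega_{P_*\mu}$ is the density of the orthogonal projection of $\mu$ onto $E=\mathrm{span}\{\theta,v\}$. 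The paper then transports $P_*\mu$ to its isotropic position $S_*(P_*\mu)$ (Lemma \ref{proximity} guarantees $\opnorm{S},\opnorm{S^{-1}}\simeq 1$) and isolates the remaining estimate as a standalone two-dimensional lemma (Lemma \ref{lemmalogconc}): for a 2D isotropic log-concave density $\omega$ and any $\xi\in S^1$, $\int t^2\omega(t\xi)\,dt\leq C$. That lemma is proved with a one-dimensional reverse-H\"older inequality for log-concave \emph{functions} (\cite[Theorem 2.2.3]{BGVV}) together with the universal lower bound on $\omega(0)$.

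Where your proposal has a gap is the final step. Rewriting $M_\theta v\cdot v$ as $\lim_{\varepsilon\to0}\frac{1}{2\varepsilon}\int_{|x\cdot\theta|<\varepsilon}(x\cdot v)^2\,d\mu$ does not let you ``avoid any appeal to section-density literature'': this limit \emph{is} a section-density statement, and inequalities \eref{holder} and \eref{paouris} control only $L^p$ norms of seminorms and negative moments of $|x|$ under $\mu$ --- they say nothing about the $\mu$-mass of the thin slab $\{|x\cdot\theta|<\varepsilon\}$, so they do not ``apply verbatim'' here. You genuinely need a pointwise bound on a low-dimensional marginal density, precisely the ``standard fact'' you mention parenthetically and then try to bypass. (Also, your description of $M_\theta v\cdot v$ as ``second moment times value at $0$'' is not quite right: it is $\int t^2 g(t)\,dt$ for the unnormalized one-dimensional log-concave $g(t)=\omega_{P_*\mu}(tv)$, whose total mass is the marginal density of $X\cdot\theta$ at $0$.) The paper's route through Lemma \ref{lemmalogconc} is the clean way to close this: project to two dimensions first, where the needed density bound reduces to the elementary fact that a two-dimensional isotropic log-concave density satisfies $\omega(0)\gtrsim 1$.
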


Indeed, when $\mu$ is log-concave $F_\mu$ is twice-differentiable when extended 1-homogeneously to $\Rn\mz$ and its second derivative matrix is given by
\begin{equation}\label{2dcalc}
F_\mu ''(\theta) \ = \ \int_{\theta^\perp} x\otimes x \ \rho(x) \ dx,
\end{equation}
where $\rho:\Rn\to[0,\infty)$ is the density of $\mu$ and the $(n-1)$-dimensional integral is taken over $\theta^\perp=\cb{x\in\Rn: x\cdot\theta=0}$, the hyperplane perpendicular to $\theta$. A proof of the differentiability of $F_\mu$ appears in \cite[Chapter 5]{thesis}. By the definition \eref{deriv2def} of the spherical second derivative, we have the bound
\begin{equation}\label{Fsop}
\|F''_S(\theta)\|_{\text{op}} \ \leq \ \|F''(\theta)|_{\theta^\perp}\|_{\text{op}} \ + \ |\grad F(\theta)\cdot\theta|.
\end{equation}

The right-hand summand is exactly $F(\theta)$, bounding which is immediate by the \Loneiso\ normalization. As to the left-hand summand, we will show that the problem reduces to the following two-dimensional assertion concerning sections of a log-concave function.

\begin{lemma}\label{lemmalogconc}
	Let $\omega:\R^2\to[0,\infty)$ be the density of an isotropic log-concave probability measure and let $\theta\in\R^2$, $|\theta|=1$. Then,
	\begin{equation*}
	\int_{-\infty}^\infty t^2\ \omega(t\theta) \ dt \ \leq C
	\end{equation*}
	where $C>0$ is a universal constant.
\end{lemma}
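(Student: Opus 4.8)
The plan is to reduce the statement to two easy facts about the one-dimensional log-concave function $g(t):=\omega(t\theta)$, namely a lower bound $g(0)\ge c$ and an upper bound $\int_{-\infty}^{\infty}g\le C$, and then to finish via an elementary estimate for log-concave functions on the line.

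First I would observe that $g$ is log-concave on $\R$, being the restriction of the log-concave density $\omega$ to a line through the origin. For the lower bound on $g(0)=\omega(0)$: the origin is the barycenter of the isotropic measure with density $\omega$, so Fradelizi's inequality gives $\omega(0)\ge e^{-2}\|\omega\|_\infty$, while $\|\omega\|_\infty\ge c_0$ for a universal $c_0>0$, since the maximum of the density of an isotropic log-concave measure on $\R^2$ is bounded below by an absolute constant; hence $g(0)\ge c>0$. For the upper bound on $\int_\R g$, the key observation is that, writing a point of $\R^2$ in the orthonormal basis $\{\theta,\theta^\perp\}$ as $s\theta+u\theta^\perp$, one has $\int_{-\infty}^{\infty}g(s)\,ds=\int_{-\infty}^{\infty}\omega(s\theta)\,ds=m(0)$, where $m(u):=\int_{-\infty}^{\infty}\omega(s\theta+u\theta^\perp)\,ds$ is the marginal density of $\omega$ on the line $\R\theta^\perp$. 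A marginal of an isotropic log-concave measure is again isotropic and log-concave, so $m$ is an isotropic (unit-variance) log-concave probability density on $\R$, and such a density satisfies $\|m\|_\infty\le C$ — it cannot be arbitrarily peaked without violating the variance normalization — whence $\int_\R g=m(0)\le C$.

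It then remains to prove the elementary inequality that for every log-concave $g:\R\to[0,\infty)$ with $0<\int_\R g<\infty$ one has $\int_{-\infty}^{\infty}t^2 g(t)\,dt\le C\,(\int_\R g)^3/g(0)^2$. Both sides transform the same way under $g(t)\mapsto\lambda g(t/s)$ for $\lambda,s>0$, so one may normalize $g(0)=\int_\R g=1$, making $g$ a log-concave probability density for which it suffices to show $\int t^2 g\le C$. Let $M$ be a mode of $g$, let $b$ be its barycenter, and let $\sigma^2$ be its variance. The log-concavity bound $g(t)\ge g(0)^{1-t/M}g(M)^{t/M}\ge 1$ for $t$ between $0$ and $M$ (using $g(M)=\|g\|_\infty\ge g(0)=1$) forces $|M|\le\int_\R g=1$; the standard one-dimensional estimates $|b-M|\le C\sigma$ and $\|g\|_\infty\sigma\le C$ (so $\sigma\le C/\|g\|_\infty\le C/g(0)=C$) then give $\int t^2 g=\sigma^2+b^2\le\sigma^2+2M^2+2(b-M)^2\le C$. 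Combining this inequality with $g(0)\ge c$ and $\int_\R g\le C$ from the previous paragraph yields $\int_{-\infty}^{\infty}t^2\omega(t\theta)\,dt\le C'$, as desired.

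I do not expect a serious obstacle here: the one genuinely useful step is recognizing that $\int_{-\infty}^{\infty}\omega(t\theta)\,dt$ is exactly the value at the origin of the marginal of $\omega$ onto the orthogonal line, which is automatically a bounded density. The step requiring the most care is the elementary one-dimensional estimate, since a log-concave function with controlled value at a point and controlled total mass could a priori still have a heavy one-sided tail; ruling this out requires using log-concavity (unimodality together with the at-least-exponential decay it forces past the mode), but this is routine.
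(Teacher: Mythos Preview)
Your proposal is correct and follows essentially the same approach as the paper: both reduce the claim to the one-dimensional reverse-H\"older-type inequality $\int t^2 g\le C(\int g)^3/g(0)^2$ for log-concave $g$, then bound $g(0)=\omega(0)$ below using standard isotropic log-concave facts and bound $\int g$ above by recognizing it as the value at $0$ of the marginal of $\omega$ onto $\theta^\perp$. The only cosmetic differences are that the paper cites the reverse-H\"older inequality from \cite{BGVV} (with the explicit constant $2$) while you sketch a direct proof, and the paper bounds the marginal value $\pi(0)$ by reusing that same inequality on $\pi$ (getting $\pi(0)\le\sqrt{2}$) whereas you invoke the equivalent fact that a one-dimensional isotropic log-concave density has bounded sup norm.
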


\begin{proof}
	By a reverse-Hölder-type inequality for one-dimensional log-concave functions \cite[Theorem 2.2.3]{BGVV} applied to $t\mapsto \omega(t\theta)$ we have
	\begin{equation*}
	\int_{-\infty}^\infty t^2 \ \omega(t\theta) \ dt \ \leq \ 2\omega(0)^{-2}\bigg(\int_{-\infty}^{\infty} \omega(t\theta) \ dt\bigg)^3.
	\end{equation*}
	It is well-known that among all two-dimensional isotropic log-concave probability measures, the density at the origin $\omega(0)$ is bounded from below by a universal constant \cite[Proposition 2.3.12]{BGVV}. Moreover, it holds true that
	\begin{equation*}
	\int_{-\infty}^\infty \omega(t\theta) \ dt \ \leq \ \sqrt{2}
	\end{equation*}
	and this will conclude the proof of the Lemma. Indeed, apply the reverse-Hölder-type inequality again, this time to the log-concave $\pi(t)=\int_{-\infty}^\infty \omega(t\eta+s\theta) \ ds$ where $|\eta|=1$, $\eta\in\theta^\perp$. Using as well that $\omega$ is isotropic we get
	\begin{equation*}
	1 \ = \ \int_{\R^2} (x\cdot\eta)^2 \ \omega(x) \ dx \ = \ \int_{-\infty}^\infty t^2 \ \pi(t) \ dt \ \leq \ \frac{2}{\pi(0)^2}\bigg(\int_{-\infty}^\infty \pi\bigg)^3 \ = \ \frac{2}{\pi(0)^2}
	\end{equation*}
	as required.
\end{proof}

\begin{proof}[Proof of Proposition \ref{propderiv2d}]
	By inequality \eref{Fsop} we must show that $|F_\mu''(\theta)\eta\cdot\eta|\leq C$ for all $\theta,\eta\in\Sn$, $\eta\perp\theta$.
	Fix such $\theta,\eta$. Restrict $F_\mu$ to the plane $E$ containing $\theta,\eta$, then $F_\mu|_E=F_{P_*\mu}$ where $P:\Rn\to E$ is the orthogonal projection. Indeed,
	\begin{equation*}
	\intRn (x\cdot\xi)_+ \ d\mu(x) \ = \ \int_E (x\cdot \xi)_+ \ dP_*\mu(x)
	\end{equation*}
	for all $\xi\in E$. Moreover, we have $F_\mu''(\theta)\eta\cdot\eta=F_{P_*\mu}''(\theta)\eta\cdot\eta$ as differentiating at $\theta$ twice in the direction $\eta$ incorporates evaluating $F_\mu$ only at points in $E$ (for a formal proof see \cite[Chapter 5]{thesis}). The measure $P_*\mu$ is a log-concave probability measure, though it is not necessarily \Loneiso. Nevertheless, it retains the property of proximity to the isotropic position. Namely, if $S\in GL_2$ is such that $S_*(P_*\mu)$ is isotropic, then $\opnorm{S},\opnorm{S^{-1}}$ are bounded from above by a universal constant. To see this, take $T\in GL_n$ such that $T_*\mu$ is isotropic and define $S=(PT^{-1}T^{-1,*}P^*)^{-1/2}\in GL_2$. Then $S_*(P_*\mu)$ is isotropic, and moreover $\opnorm{S},\opnorm{S^{-1}}\leq C$ because $cId_E \leq (T^{-1}T^{-1,*})|_E\leq CId_E$ in the sense of positive definite matrices, by Theorem \ref{proximity}.
	Denoting the density of $P_*\mu$ by $\omega_{P_*\mu}:\R^2\to[0,\infty)$ we may now calculate
	\begin{align*}
	F_{P_*\mu}''(\theta)\eta\cdot\eta \ = \ \int_{-\infty}^\infty t^2 \ \omega_{P_*\mu}(t\eta) \ dt \ = \ \int_{-\infty}^{\infty} t^2 \ \frac{\omega_{S_*(P_*\mu)}(tS\eta)}{|\det S^{-1}|} \ dt.
	\end{align*}
	Making the change of variable $t=s/|S\eta|$ and noticing that $|\det S^{-1}|,|S\eta|\simeq 1$ we get that the above equals up to a factor of a universal constant to
	\begin{align*}
	\int_{-\infty}^\infty s^2 \ \omega_{S_*(P_*\mu)}(s\xi) \ ds
	\end{align*}
	for $\xi=S\eta/|S\eta|$. The proof of the Proposition is concluded by applying Lemma \ref{lemmalogconc}.
\end{proof}

\newpage

\section{Appendix}

\subsection{}\label{app.gradprop}

\begin{proof}[Proof of Proposition \ref{gradprop}]
	We follow along the proof of Theorem \ref{thm2}. We first prove the second assertion of Proposition \ref{gradprop}. Write $|\grad_S F_\mu(\theta)|^2 \ = \ |\grad F_\mu(\theta)|^2 - (\grad F_\mu(\theta)\cdot\theta)^2$. By equality \eref{2ddef} and after rearranging the order of integration, we may write
	\begin{equation}
	\E |\grad F_\mu|^2 \ = \ \iint_{\Rn\times\Rn} (x\cdot y)\bigg(\intSn \I_{\theta \cdot x \geq 0}\ \I_{\theta\cdot y \geq 0} \ d\sign(\theta) \bigg)\ d\mu(x)\ d\mu(y)
	\end{equation}
	where the symbol $\I$ represents  the indicator function assuming value one if the condition is satisfied, zero otherwise. The inner integral over the sphere is the proportion of the sphere that lies in the intersection of two half-planes, and is simply $\big(\pi-\arccos \big((x\cdot y)/|x||y|\big)\big)/2\pi$.
	Alternatively, one may formally apply the polar integration formula \eref{polar} and proceed as in the proof of Theorem \ref{thm2} to obtain the same result. Continuing, expand the function $\psi(\tau)=\pi - \arccos\tau$ for $\tau\in[-1,1]$, $\arccos\tau \in [0,\pi]$ into a power series around 0 and get $\psi(\tau)=\pi/2+\tau+\tau^3/12\pi+\bigo(\tau^5)$. 
	We thus have 
	\begin{align*}
	\E |\grad F_\mu|^2 = \iint_{\Rn\times\Rn} \bigg(\frac{x\cdot y}{4}  +  \frac{(x\cdot y)^2}{2\pi|x||y|}  +  \frac{(x\cdot y)^4}{12\pi|x|^3|y|^3}  +  \bigo\bigg(\frac{(x\cdot y)^6}{|x|^5|y|^5}\bigg)\bigg) d\mu\otimes \mu.
	\end{align*}
	As for the other component, observe that $\grad F_\mu(\theta)\cdot\theta=F(\theta)$ and as was calculated in the proof of Theorem \ref{thm2},
	\begin{align*}
	\E F_\mu^2  =  \iint_{\Rn\times\Rn} \bigg(\frac{|x||y|}{2\pi n}  +  \frac{x\cdot y}{4n}  +  \frac{(x\cdot y)^2}{4\pi n |x||y|}  +  \bigo\bigg(\frac{(x\cdot y)^4}{ n|x|^3|y|^3}\bigg) d\mu\otimes\mu.
	\end{align*}
	The linear component again vanished because $\mu$ is centered, and as $\Cov_1(\mu)=\alpha/\sqrt{n}\cdot\Id$ we have
	\begin{align*}
	\int |x| \ d\mu \ = \ \alpha\sqrt{n}\tab;\tab\iint \frac{(x\cdot y)^2}{|x||y|} d\mu\otimes \mu \ = \ \alpha^2.
	\end{align*}
	According to the additional assumptions of Theorem \ref{thm2},
	\begin{align}
	\iint \frac{(x\cdot y)^4}{|x|^3|y|^3} \ d\mu\otimes\mu  \ - \ 3\iint \frac{(x\cdot y)^2}{n|x||y|} \ d\mu\otimes\mu\ \leq \ \frac{\gamma\alpha^2}{n^2}
	\end{align}
	and in conclusion $\E |\grad_S F_\mu|^2 \ \leq \ C(1+\gamma+\delta)\alpha^2/n^2$, as we required.
	To prove the first assertion of Proposition \ref{gradprop}, simply repeat the argument above while taking the two series expansions one order less.
\end{proof}

\subsection{}\label{app.2deriv}

We prove the following.
\begin{prop}\label{app.2deriv.prop}
	Let $\mu$ be an \Loneiso\ probability measure on $\Rn$, and write $\Cov_1(\mu)=\alpha/\sqrt{n}\cdot\Id$. Assume that $F_\mu$ is $C^2$-smooth and that 
	\begin{align*}
	\intSn \iint_{x,y\in\theta^\perp} (x\cdot y)^2 \I_{|x\cdot y|>|x||y|/2}\ \rho(x) \ \rho(y) \ dx\ dy\ d\sign(\theta) \ \leq \ \frac{\gamma\alpha^2}{n},
	\end{align*}
	where $\rho:\Rn\to\Rplus$ is the density of $\mu$. Assume in addition that
	\begin{equation*}
	\iint \frac{(x\cdot y)^4}{|x|^3|y|^3} \ d\mu\otimes \mu\ \leq\ \frac{\beta\alpha^2}{n} \tab\text{and}\tab \iint \frac{(x\cdot y)^6}{|x|^5|y|^5} \ d\mu\otimes\mu\ \leq\ \frac{\delta\alpha^2}{n^2}.
	\end{equation*}
	Then
	\begin{align*}
	\E\hsnorm{F''_S}^2 \ \leq \ \frac{\alpha^2}{4\pi}(\beta-3) \ + \ \frac{C(1+\beta+\gamma+\delta)\alpha^2}{n}
	\end{align*}
	where $C>0$ is a universal constant.
\end{prop}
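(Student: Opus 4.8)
The plan is to expand $F''_S$ into pieces that either already appear in the paper or submit to the power-series method of Theorem~\ref{thm2}. Throughout, $F$ denotes $F_\mu$ extended $1$-homogeneously to $\Rn\mz$, so by \eqref{2dcalc} one has $F''(\theta)=\int_{\theta^\perp}x\otimes x\,\rho(x)\,dx$; in particular $F''(\theta)\theta=0$, hence $F''(\theta)$ is supported on $\theta^\perp$ and $\Pthperp F''(\theta)\Pthperp=F''(\theta)$. Since $\grad F(\theta)\cdot\theta=F(\theta)$, the definition \eqref{deriv2def} reduces to $F''_S(\theta)=F''(\theta)-F(\theta)\Pthperp$, so, using $\hsnorm{\Pthperp}^2=\Tr\Pthperp=n-1$ and $\langle F''(\theta),\Pthperp\rangle_{HS}=\Tr F''(\theta)$,
\begin{equation*}
\hsnorm{F''_S(\theta)}^2=\hsnorm{F''(\theta)}^2-2F(\theta)\Tr F''(\theta)+(n-1)F(\theta)^2.
\end{equation*}
I would then invoke the classical identity for $1$-homogeneous functions, $\Tr F''(\theta)=\Delta F(\theta)=(n-1)F(\theta)+\Delta_S F(\theta)$, which collapses the last two terms to $-(n-1)F(\theta)^2-2F(\theta)\Delta_S F(\theta)$; integrating over $\Sn$ and applying Green's identity $\intSn F\,\Delta_S F\,d\sign=-\intSn|\grad_S F|^2\,d\sign$ gives
\begin{equation*}
\E\,\hsnorm{F''_S}^2=\E\,\hsnorm{F''}^2-(n-1)\,\E F^2+2\,\E|\grad_S F|^2 .
\end{equation*}
Here $\E|\grad_S F|^2=O\big((1+\beta)\alpha^2/n\big)$ by Proposition~\ref{gradprop}, and $\E F^2=\frac{\alpha^2}{2\pi}+\frac{\alpha^2}{4\pi n}+\frac{1}{48\pi n}\iint\frac{(x\cdot y)^4}{|x|^3|y|^3}\,d\mu\otimes\mu+O(\delta\alpha^2/n^3)$ by the expansion already carried out in the proof of Theorem~\ref{thm2}. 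So everything reduces to $\E\,\hsnorm{F''}^2=\intSn\iint_{x,y\in\theta^\perp}(x\cdot y)^2\rho(x)\rho(y)\,dx\,dy\,d\sign(\theta)$.

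For that I would establish the integral-geometric identity (Blaschke--Petkantschin type)
\begin{equation*}
\intSn\iint_{x,y\in\theta^\perp}h(x,y)\,d\mathcal H^{n-1}(x)\,d\mathcal H^{n-1}(y)\,d\sign(\theta)=\frac{n-2}{2\pi}\iint_{\Rn\times\Rn}\frac{h(x,y)}{|x|\,|y|\,\sin\angle(x,y)}\,dx\,dy,
\end{equation*}
with the constant pinned down by testing against a product of standard Gaussians (equivalently, from $\lim_{\epsilon\to0}\sign(\{\theta:|x\cdot\theta|<\epsilon,\ |y\cdot\theta|<\epsilon\})/(4\epsilon^2)=\frac{n-2}{2\pi|x||y|\sin\angle(x,y)}$, obtained by localising near the equatorial $(n-3)$-sphere $\Sn\cap\mathrm{span}(x,y)^\perp$). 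Taking $h(x,y)=(x\cdot y)^2\rho(x)\rho(y)$ and writing $t=\cos\angle(x,y)=\frac{x\cdot y}{|x||y|}$ converts the hyperplane integral into a genuine $\mu\otimes\mu$ integral,
\begin{equation*}
\E\,\hsnorm{F''}^2=\frac{n-2}{2\pi}\iint|x||y|\,g(t)\,d\mu(x)\,d\mu(y),\qquad g(t)=\frac{t^2}{\sqrt{1-t^2}}.
\end{equation*}

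Now split at $|t|=1/2$. On $\{|t|>1/2\}$ the contribution is exactly $\frac{n-2}{2\pi}\iint|x||y|g(t)\I_{|t|>1/2}\,d\mu\otimes\mu$, which is the quantity assumed $\le\gamma\alpha^2/n$ in the hypothesis --- this is precisely why the hypothesis carries the indicator $\I_{|x\cdot y|>|x||y|/2}$: it isolates the region where the power series of $g$ does not converge uniformly, unlike in Theorem~\ref{thm2}, where the analogous function $\varphi$ is bounded on $[-1,1]$ and no such splitting is needed. On $\{|t|\le1/2\}$ use the uniformly convergent expansion $g(t)=t^2+\tfrac12 t^4+O(t^6)$, the truncation errors coming from the indicator being themselves $O(t^6)$ since $\I_{|t|>1/2}\le 16t^4$ and $|t|\le1$. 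Integrating term by term, the $t^2$ term gives $\iint\frac{(x\cdot y)^2}{|x||y|}\,d\mu\otimes\mu=\alpha^2$, the $t^4$ term gives $\iint\frac{(x\cdot y)^4}{|x|^3|y|^3}\,d\mu\otimes\mu\le\beta\alpha^2/n$, and the remainder is $O\big(\iint\frac{(x\cdot y)^6}{|x|^5|y|^5}\,d\mu\otimes\mu\big)=O(\delta\alpha^2/n^2)$, so that $\E\,\hsnorm{F''}^2=\frac{n-2}{2\pi}\big(\alpha^2+\tfrac12\iint\frac{(x\cdot y)^4}{|x|^3|y|^3}\,d\mu\otimes\mu\big)+O\big((1+\gamma+\delta)\alpha^2/n\big)$.

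Substituting this and the $\E F^2$ expansion into $\E\hsnorm{F''}^2-(n-1)\E F^2+2\E|\grad_S F|^2$: the leading $\frac{n\alpha^2}{2\pi}$ pieces of $\E\hsnorm{F''}^2$ and $(n-1)\E F^2$ cancel exactly; retaining the bound $\iint\frac{(x\cdot y)^4}{|x|^3|y|^3}\,d\mu\otimes\mu\le\beta\alpha^2/n$ and pushing every $O(1/n)$ correction (including the gradient term) into the error, the surviving $O(1)$ constants combine as $\frac{\beta\alpha^2}{4\pi}-\frac{\alpha^2}{\pi}+\frac{\alpha^2}{4\pi}=\frac{\alpha^2}{4\pi}(\beta-3)$, which is the claimed bound $\E\hsnorm{F''_S}^2\le\frac{\alpha^2}{4\pi}(\beta-3)+\frac{C(1+\beta+\gamma+\delta)\alpha^2}{n}$. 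The main obstacle is the second step: one must obtain the integral-geometric identity with the constant $\frac{n-2}{2\pi}$ exactly right (an error there would spoil both the $O(1)$ term and, in particular, the value $-3$), and one must be mindful that $g(t)=t^2/\sqrt{1-t^2}$ has radius of convergence only $1$, so that the split at $|t|=1/2$ together with the hypothesis-supplied control of the $|t|>1/2$ region is genuinely indispensable.
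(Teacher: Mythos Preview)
Your proposal is correct and follows essentially the same architecture as the paper's proof: the same pointwise identity $\hsnorm{F''_S}^2=\hsnorm{F''}^2-(n-1)F^2-2F\Delta_S F$ (the paper states this as Claim~\ref{int2dexpansion}, splitting $\E F^2$ into $(\E F)^2+\Var(F)$), the same use of Proposition~\ref{gradprop} and the $\E F^2$ expansion from the proof of Theorem~\ref{thm2}, the same splitting at $|t|=1/2$ with the hypothesis absorbing the singular region, and the same power-series expansion of $t^2/\sqrt{1-t^2}$ leading to the constant $\frac{\alpha^2}{4\pi}(\beta-3)$.

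The one substantive difference is how the integral-geometric identity
\[
\intSn\iint_{x,y\in\theta^\perp}h(x,y)\,dx\,dy\,d\sign(\theta)=\frac{n-2}{2\pi}\iint_{\Rn\times\Rn}\frac{h(x,y)}{|x||y|\sin\angle(x,y)}\,dx\,dy
\]
is obtained. You invoke it directly as a Blaschke--Petkantschin formula, proposing to pin down the constant by a Gaussian test or by the local expansion of $\sign\{\theta:|x\cdot\theta|<\epsilon,\ |y\cdot\theta|<\epsilon\}$ near the codimension-two great sphere. The paper instead proves it via Lemma~\ref{lemmarand}: writing the $\theta$-integral as an $O_n$-average, polar-coordinatizing on $e_1^\perp$, and then pushing forward $\lambda_n\times\sigma_{n-2}\times\sigma_{n-2}$ to $\sign\times\sign$ with the explicit density $\Omega_n/\sqrt{1-(\theta_1\cdot\theta_2)^2}$; after reversing polar integration the constant $\Omega_n(n-1)^2\kappa_{n-1}^2/(n\kappa_n)^2$ collapses to $(n-2)/2\pi$. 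Your route is shorter if the Blaschke--Petkantschin identity is taken as known, while the paper's derivation is self-contained and makes the constant transparent through the Haar-measure computation.
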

The assumption that $\mu$ is an \Loneiso\ log-concave measure is sufficient for Proposition \ref{app.2deriv.prop} to apply with parameters all of the order of magnitude of a universal constant, see \cite[Chapter 6]{thesis} for details.
The essence of the proof will be the calculation of $\E\hsnorm{F_\mu''}^2$. The transition between the spherical and Euclidean second derivatives is given by the following general relation.
\begin{claim}\label{int2dexpansion}
	Let $f:\Sn\to\R$ be $C^2$-smooth and 1-homogeneous. Then
	\begin{equation*}
	\E\hsnorm{f_S''}^2 \ = \ \E \hsnorm{f''}^2 - (n-1)(\E f)^2 - (n-1)\Var(f) +2\E |\grad_S f|^2.
	\end{equation*}
\end{claim}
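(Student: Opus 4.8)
The plan is to express the spherical second derivative $f_S''$ in terms of the Euclidean one via the defining relation \eref{deriv2def}, namely $f_S''(\theta) = \Pthperp(f''(\theta) - (\grad f(\theta)\cdot\theta)\Id)\Pthperp$, and then expand the Hilbert--Schmidt norm squared term by term. Writing $P=\Pthperp$ and $\lambda = \grad f(\theta)\cdot\theta = f(\theta)$ (using $1$-homogeneity, which gives $\grad f(\theta)\cdot\theta = f(\theta)$ by Euler's identity), we have $\hsnorm{f_S''}^2 = \hsnorm{P f'' P}^2 - 2\lambda \Tr(Pf''P) + \lambda^2 \Tr(P)$, since $P$ is the projection onto the $(n-1)$-dimensional space $\theta^\perp$ so $\hsnorm{P}^2 = \Tr(P) = n-1$. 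The cross term is $\Tr(Pf''P) = \Tr(f''P) - $ wait, more carefully $\Tr(Pf''P\cdot P) = \Tr(Pf''P)$; I would use $\Tr(Pf''P) = \Tr(f'' P) = \Tr(f'') - f''\theta\cdot\theta$, and separately note that $1$-homogeneity forces $f''(\theta)\theta = 0$ (differentiating the Euler relation), so in fact $\Tr(f''P) = \Tr(f'')$ and $Pf''P = f''$ restricted appropriately; this simplification should be exploited throughout.

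The key step is then to handle $\hsnorm{Pf''P}^2$. Using $f''\theta = 0$ one gets $Pf''P = f'' - $ (rank-one corrections that vanish), so morally $\hsnorm{Pf''P}^2 = \hsnorm{f''}^2$, but I should be careful: $P = \Id - \theta\otimes\theta$, so $Pf''P = f'' - (\theta\otimes\theta)f'' - f''(\theta\otimes\theta) + (\theta\cdot f''\theta)\theta\otimes\theta$, and since $f''\theta = 0$ this is exactly $f''$. Hence $\hsnorm{f_S''}^2 = \hsnorm{f''}^2 - 2 f(\theta)\,\Tr(f'') + (n-1)f(\theta)^2$. Now I integrate over the sphere. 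The term $(n-1)\E f(\theta)^2 = (n-1)(\E f)^2 + (n-1)\Var(f)$, so the claimed formula requires showing
\[
\E\big[ f(\theta)\Tr f''(\theta)\big] \ = \ (n-1)(\E f)^2 + (n-1)\Var(f) - \E|\grad_S f|^2.
\]
Wait — comparing with the target $\E\hsnorm{f_S''}^2 = \E\hsnorm{f''}^2 - (n-1)(\E f)^2 - (n-1)\Var f + 2\E|\grad_S f|^2$, and what I have is $\E\hsnorm{f_S''}^2 = \E\hsnorm{f''}^2 - 2\E[f\Tr f''] + (n-1)(\E f)^2 + (n-1)\Var f$, so I need $-2\E[f\Tr f''] + (n-1)(\E f)^2 + (n-1)\Var f = -(n-1)(\E f)^2 - (n-1)\Var f + 2\E|\grad_S f|^2$, i.e. $\E[f\,\Tr f''] = (n-1)(\E f)^2 + (n-1)\Var f - \E|\grad_S f|^2 = (n-1)\E f^2 - \E|\grad_S f|^2$.

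Thus the main obstacle, and the crux of the proof, is the integration-by-parts identity $\E_\theta[f(\theta)\,\Tr f''(\theta)] = (n-1)\E f^2 - \E|\grad_S f|^2$ on the sphere. I would derive this by relating the Euclidean Laplacian $\Tr f''$ restricted to $\Sn$ to the spherical Laplacian $\Delta_S$ via the standard decomposition for a $1$-homogeneous extension: for $f$ homogeneous of degree $1$, $\Delta_{\Rn} f = \Delta_S f / r + (\text{radial part})$, and evaluating at $r=1$ together with the Euler relations gives $\Tr f''(\theta) = \Delta_S f(\theta)$ up to an explicit lower-order correction linear in $f$ (precisely, for degree-$k$ homogeneity the radial contribution is $k(k+n-2)f/r^2$, so with $k=1$ it is $(n-1)f$, giving $\Tr f'' = \Delta_S f + (n-1) f$ — one should double check the sign and the exact constant by testing on a linear harmonic). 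Then Green's formula on the sphere, $\E[f \Delta_S f] = -\E|\grad_S f|^2$, finishes it: $\E[f\Tr f''] = \E[f\Delta_S f] + (n-1)\E f^2 = -\E|\grad_S f|^2 + (n-1)\E f^2$, exactly as needed. The only genuinely delicate points are (i) justifying that $F_\mu$ (or the general $C^2$ $1$-homogeneous $f$) is smooth enough that Green's identity and the pointwise homogeneity relations hold — which is granted by the $C^2$ hypothesis — and (ii) getting the homogeneity constant $(n-1)$ right; everything else is bookkeeping with $P = \Id - \theta\otimes\theta$.
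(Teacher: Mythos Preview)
Your proposal is correct and follows essentially the same route as the paper: expand the defining relation \eref{deriv2def}, use $1$-homogeneity to get $f''(\theta)\theta=0$ and $\grad f(\theta)\cdot\theta=f(\theta)$ (so that $\Pthperp f''\Pthperp=f''$), arrive at the pointwise identity $\hsnorm{f_S''}^2=\hsnorm{f''}^2-2f\,\Tr f''+(n-1)f^2$, convert $\Tr f''$ to $\Delta_S f+(n-1)f$ via the radial/spherical decomposition of the Laplacian, and finish with $\E[f\Delta_S f]=-\E|\grad_S f|^2$. The paper does exactly this, only folding the Laplacian substitution into the pointwise step to obtain $\hsnorm{f_S''}^2=\hsnorm{f''}^2-(n-1)f^2-2f\Delta_S f$ directly; your constant $(n-1)$ is correct.
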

\begin{proof}
	First, we establish that 
	\begin{equation}\label{fdp}
	\hsnorm{f_S''}^2=\hsnorm{f''}^2-(n-1)f^2-2f\Delta_S f
	\end{equation} 
	pointwise. Indeed, fix $\theta\in\Sn$ and choose an orthonormal basis $e_1,...,e_n$ such that $e_1=\theta$. Recall the formula \eref{deriv2def} for the spherical second derivative, $f''_S(\theta) \ = \ \Pthperp(f''(\theta)-(\grad f(\theta)\cdot\theta)\Id)\Pthperp$. As $f''(\theta)\theta=0$ for any 1-homogeneous function and also $\grad f(\theta)\cdot\theta=f(\theta)$, then equation \eref{fdp} follows from simply taking the elements of $f''_S$ squared, and replacing the Euclidean Laplacian with the spherical one according the formula $\Delta f(\theta)=\Delta_Sf(\theta)+(n-1)(\grad f(\theta)\cdot\theta)+f''(\theta)\theta\cdot\theta$ (see \cite{BCG}). Moving on, on the sphere we have the spectral relation $\E f\Delta_S f=-\E|\grad_S f|^2$ and the Claim is obtained by writing $\Var(f)=\E f^2 - (\E f)^2$.
\end{proof}

At this point note that by Theorem \ref{thm1} and Proposition \ref{gradprop} we have
\begin{equation*}
\Var(F) \ = \ \bigo\bigg(\frac{(1+\beta)\alpha^2}{n^2}\bigg), \tab \E|\grad_S F|^2 \ = \ \bigo\bigg(\frac{(1+\beta)\alpha^2}{n}\bigg)
\end{equation*}
and by a calculation in the proof of Theorem \ref{thm2} also
\begin{equation*}
(\E F)^2 \ = \ \frac{\alpha^2}{2\pi} \ + \ \frac{\alpha^2}{4\pi n} \ + \ \bigo\bigg(\frac{\alpha^2}{n^2}\bigg).
\end{equation*}
Hence already 
\begin{equation}\label{app.2deriv.mid}
\E \|F''_S\|_{\text{HS}}^2 \ = \ \E \|F''\|_{\text{HS}}^2 \ - \ \frac{n\alpha^2}{2\pi} \ + \ \frac{\alpha^2}{4\pi} \ + \ \bigo\bigg(\frac{(1+\beta)\alpha^2}{n}\bigg).
\end{equation}
We thus turn to calculating $\E\hsnorm{F''}^2$ and state a lemma toward this end, sampling points on the sphere by sampling points in lower dimension. Denote by $\lambda_n$ the unique Haar measure on the orthogonal group $O_n$, and by $\Snzero=\{\theta\in\Sn: \theta_1=0\}$ the embedding of $S^{n-2}$ into the hyperplane $\{x\in \Rn: x_1=0\}$.
\begin{lemma}\label{lemmarand}
	Define $T:\On\times\Snzero\times\Snzero\to\Sn\times\Sn$ by
	\begin{equation*}
	T(U,\theta_1,\theta_2)\ = \ (U\theta_1, U\theta_2),
	\end{equation*}
	and write $\nu=\lambda_n\times\sigma_{n-2}\times\sigma_{n-2}$. Then $T_*\nu$ is absolutely continuous with respect to $\sign\times\sign$ and moreover
	\begin{equation*}
	\frac{dT_*\nu}{d(\sign\times\sign)}(\theta_1,\theta_2) \ = \ \frac{\Omega_n}{\sqrt{1-(\theta_1\cdot\theta_2)^2}},
	\end{equation*}
	with the normalizing constant  $\Omega_n=\frac{n-2}{2}\big(\Gamma\big(\frac{n-1}{2}\big)/\Gamma\big(\frac{n}{2}\big)\big)^2$.
\end{lemma}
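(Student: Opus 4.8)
The plan is to use the rotational symmetry of the construction to collapse everything to a one–dimensional computation about the distribution of an inner product. First I would note that both $\sign\times\sign$ and $T_*\nu$ are invariant under the diagonal action $U\cdot(\psi_1,\psi_2)=(U\psi_1,U\psi_2)$ of $\On$ on $\Sn\times\Sn$; for $T_*\nu$ this follows from the left invariance of the Haar measure $\lambda_n$ after substituting $U\mapsto VU$. Since this diagonal action is transitive on each level set $\cb{(\psi_1,\psi_2)\in\Sn\times\Sn:\psi_1\cdot\psi_2=t}$ for $t\in(-1,1)$ (two non-collinear pairs with the same inner product have the same Gram matrix, hence differ by an element of $\On$), the $\lambda_n$-average $\bar\phi(\psi_1,\psi_2):=\intOn\phi(V\psi_1,V\psi_2)\,d\lambda_n(V)$ of any bounded measurable $\phi$ on $\Sn\times\Sn$ agrees $(\sign\times\sign)$-a.e.\ with a function $\tilde\phi(\psi_1\cdot\psi_2)$ of the inner product alone. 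Replacing $\phi$ by $\bar\phi$ changes neither $\int\phi\,dT_*\nu$ (Haar invariance again) nor the integral of $\phi$ against the candidate density (both $\sign\times\sign$ and the weight $(1-(\psi_1\cdot\psi_2)^2)^{-1/2}$ being $\On$-invariant), so it suffices to verify the asserted identity on test functions of the form $\phi(\psi_1,\psi_2)=\tilde\phi(\psi_1\cdot\psi_2)$.

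For such a $\phi$ the left-hand side simplifies at once, because $U\theta_1\cdot U\theta_2=\theta_1\cdot\theta_2$, so $\int\phi\,dT_*\nu=\E\,\tilde\phi(\theta_1\cdot\theta_2)$ with $\theta_1,\theta_2$ independent and uniform on $\Snzero\cong S^{n-2}$. I would then invoke the standard fact that for two independent uniform points on $S^{m-1}\subset\R^m$ the inner product has density $\frac{\Gamma(m/2)}{\sqrt{\pi}\,\Gamma((m-1)/2)}(1-t^2)^{(m-3)/2}$ on $[-1,1]$ (a one-line polar-coordinates computation, equivalently the Beta law of a single coordinate of a uniform point). Applying this with $m=n-1$ on the left, and with $m=n$ on the right — where the extra weight $(1-s^2)^{-1/2}$ lowers the exponent by $\tfrac12$ — both sides become constant multiples of $\int_{-1}^{1}\tilde\phi(t)(1-t^2)^{(n-4)/2}\,dt$, and matching the constants forces
$$\Omega_n=\frac{\Gamma((n-1)/2)}{\sqrt{\pi}\,\Gamma((n-2)/2)}\cdot\frac{\sqrt{\pi}\,\Gamma((n-1)/2)}{\Gamma(n/2)}=\frac{\Gamma((n-1)/2)^2}{\Gamma((n-2)/2)\,\Gamma(n/2)}.$$
Using $\Gamma(n/2)=\tfrac{n-2}{2}\Gamma((n-2)/2)$ this rewrites as $\frac{n-2}{2}\big(\Gamma((n-1)/2)/\Gamma(n/2)\big)^2$, exactly the claimed $\Omega_n$; in particular the Radon–Nikodym derivative is exhibited as the function $t\mapsto\Omega_n(1-t^2)^{-1/2}$ of $\psi_1\cdot\psi_2$, which simultaneously yields absolute continuity and the formula.

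The routine ingredients are the density of the inner product on a sphere and the Gamma-function bookkeeping. The step that needs genuine care — and which I regard as the main obstacle — is the symmetrization: making precise that $\On$-invariant functions on $\Sn\times\Sn$ reduce (almost everywhere) to functions of the inner product, and that this reduction is legitimate for $T_*\nu$, i.e.\ that $T_*\nu$ puts no mass on the "collinear" set $\{\psi_1=\pm\psi_2\}$. The latter holds because that set is $(\sign\times\sign)$-null and is reached by $T$ only when $\theta_1=\pm\theta_2$, which is $(\sigma_{n-2}\times\sigma_{n-2})$-null; alternatively one can sidestep the issue entirely by disintegrating $T_*\nu$ over $\Psi_1=U\theta_1$ (uniform on $\Sn$) and recognizing the conditional law of $\Psi_2=U\theta_2$, given $\Psi_1$ and the angle between $\theta_1,\theta_2$, as the uniform measure on a sphere of fixed latitude, which reproduces the $(1-t^2)^{-1/2}$ Jacobian directly.
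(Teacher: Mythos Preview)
Your proof is correct and follows essentially the same strategy the paper sketches: observe that $T_*\nu$ is invariant under the diagonal $\On$-action, reduce to test functions of the inner product $\theta_1\cdot\theta_2$, and then identify the density by comparing the law of the inner product on $S^{n-2}$ with that on $\Sn$ weighted by $(1-t^2)^{-1/2}$. The paper only outlines this (``the density must be a function of $\theta_1\cdot\theta_2$ and can be calculated using a test function'') and defers the details to the thesis; your write-up fills in exactly those details, including the handling of the collinear set and the Gamma-function bookkeeping.
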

Indeed, it is easy to be convinced that $T_*\nu$ is invariant under joint rotation, then the density must be a function of $\theta_1\cdot\theta_2$ and can be calculated using a test function, see \cite[Chapter 6]{thesis} for details. Moving on, the proof of Proposition \ref{app.2deriv.prop} is completed with the following calculation. 
\begin{proof}[Proof of Proposition \ref{app.2deriv.prop}]
	The formula \eref{2dcalc} for the Euclidean second derivative yields 
	\begin{align*}\label{int2dcalc}
	\E\ \hsnorm{F''_S}^2 \ &= \ \intSn \int_{x\in\theta^\perp}\int_{y\in\theta^\perp} (x\cdot y)^2 \ \rho(x) \rho(y) \ dx \ dy \ d\sign(\theta).
	\end{align*}
	We drop the region where $|x\cdot y|>|x||y|/2$, according to the assumption of Proposition \ref{app.2deriv.prop}, and the above equals up to $\gamma\alpha^2/n$ to
	\begin{align*}
		 \intSn \int_{x\in\theta^\perp}\int_{y\in\theta^\perp} (x\cdot y)^2\I_{|x\cdot y|\leq |x||y|/2} \ \rho(x) \rho(y) \ dx \ dy \ d\sign(\theta)
	\end{align*}
	It is clear that fixing a point on the sphere and applying a random rotation to it according to $\lambda_n$ amounts to the same as randomizing a point on the sphere according to $\sign$. We thus continue the calculation by
	\begin{align*}
	=\ \intOn \iint_{x,y\in e_1^\perp} (Ux \cdot Uy)^2\I_{|Ux\cdot Uy|\leq |Ux||Uy|/2} \ \rho(Ux)\rho(Uy) \ dx \ dy \ d\lambda_n(U)
	\end{align*}
	Using polar integration for both the integrals over $e_1^\perp$ and writing $\kappa_n$ for the volume of the unit ball in dimension $n$ we get
	\begin{align*}
	= \ (n-1)^2\kappa_{n-1}^2 \intOn &\iint_{r_1,r_2=0}^\infty \iint_{\theta_1,\theta_2\in\Snzero} r_1^nr_2^n\ (U\theta_1\cdot U\theta_2)^2 \I_{|U\theta_1\cdot U\theta_2|\leq1/2}
	\\
	& \rho(r_1U\theta_1)\rho(r_2U\theta_2) \ dr_1 \ dr_2 \ d\sigma_{n-2}(\theta_1) \ d\sigma_{n-2}(\theta_2) \ d\lambda_n(U)
	\end{align*}
	We interchange integration according to $\lambda_n\times\sigma_{n-2}\times\sigma_{n-2}$ with integration according to $\sign\times\sign$ as instructed by Lemma \ref{lemmarand},
	\begin{align*}
	= \ (n-1)^2\kappa_{n-1}^2&\iint_{r_1,r_2=0}^\infty  \iint_{\theta_1,\theta_2\in\Sn} r_1^nr_2^n\  (\theta_1\cdot\theta_2)^2\ \I_{|\theta_1\cdot \theta_2|\leq1/2}
	\\&
	\frac{\Omega_n}{\sqrt{1-(\theta_1\cdot\theta_2)^2}} \ \rho(r_1\theta_1)\rho(r_2\theta_2) \ dr_1 \ dr_2 \ d\sign(\theta_1)\ d\sign(\theta_2)
	\end{align*}
	Now use reverse polar integration twice on $\Rn$ to get
	\begin{align*}
	=\ \frac{\Omega_n(n-1)^2\kappa_{n-1}^2}{n^2\kappa_n^2}\iint_{\Rn\times\Rn}\frac{(x\cdot y)^2}{|x||y|} \ \frac{\I_{|x\cdot y|\leq|x||y|/2}}{\sqrt{1-(x\cdot y)^2/|x|^2|y|^2}} \ d\mu(x) \ d\mu(y).\ \ \ 
	\end{align*}
	The coefficient outside the integral is just $(n-2)/2\pi$, and next move is to apply the expansion $1/\sqrt{1-t^2} = 1+t^2/2+\bigo(t^4)$ valid whenever $|t|<1/2$. We arrive at
	\begin{align*}
	\E\hsnorm{F''}^2 \ &\leq \ \frac{\gamma\alpha^2}{n} \ + \ \frac{n-2}{2\pi}\iint\bigg(\frac{(x\cdot y)^2}{|x||y|} + \frac{(x\cdot y)^4}{2|x|^3|y|^3} +  \bigo\bigg(\frac{(x \cdot y)^6}{|x|^5|y|^5}\bigg)\bigg)\ d\mu\otimes\mu
	\\&
	= \ \frac{\gamma\alpha^2}{n} \ + \ \frac{n-2}{2\pi}\bigg(\alpha^2 \ + \ \frac{\beta\alpha^2}{2n} \ + \ \bigo\bigg(\frac{\delta\alpha^2}{n^2}\bigg)\bigg)
	\\&
	= \ \frac{n\alpha^2}{2\pi} \ - \ \frac{\alpha^2}{\pi} \ + \ \frac{\beta\alpha^2}{4\pi} \ + \ \bigo\bigg(\frac{(\beta+\gamma+\delta)\alpha^2}{n}\bigg).
	\end{align*}

	Combining with equality \eref{app.2deriv.mid} we finally have the desired
	\begin{align*}
	\E\|F''_S\|_\text{HS}^2 \ \leq \ \frac{\alpha^2}{4\pi}(\beta-3) \ + \ \bigo\bigg(\frac{(1+\beta+\gamma+\delta)\alpha^2}{n}\bigg).
	\end{align*}
\end{proof}

\subsection{}\label{app.proximity}

Lemma \ref{proximity} will be proven along the next four claims. The heart of the proof is to obtain a one-sided bound on $Z_{p,\mu}$ that does not involve $T$.
\begin{claim}\label{app.proximity.claim1}
	Under the assumptions of Lemma \ref{proximity}, $Z_{p,\mu}\sgeq n^{p/2-1}$.
\end{claim}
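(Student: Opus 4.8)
The plan is to bound $Z_{p,\mu}=\intRn|x|^{p-2}\,d\mu$ from below using only log-concavity and the $\Lpiso$ normalization — no reference to $T$ — by turning two elementary one-sided estimates into a self-improving inequality. As a preliminary observation, taking the trace in $\Covp(\mu)=Z_{p,\mu}\Id$ gives $\intRn|x|^p\,d\mu=\Tr\Covp(\mu)=nZ_{p,\mu}$; moreover, since $\Covp(\mu)$ is nondegenerate the support of $\mu$ spans $\Rn$, so by the log-concave dichotomy $\mu$ is absolutely continuous and the reverse-H\"older inequality \eref{holder} applies to it. Throughout, $p$ is fixed in $[1,2]$, and I will just keep track of constants near the endpoints.

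First I would apply Jensen's inequality to the random variable $|x|^2$ with the convex function $t\mapsto t^{(p-2)/2}$ (convex on $(0,\infty)$ since the exponent $\tfrac{p-2}{2}$ is negative for $p<2$, the case $p=2$ being trivial), obtaining $Z_{p,\mu}\ \ge\ \big(\intRn|x|^2\,d\mu\big)^{(p-2)/2}$. On its own this is useless, as the right-hand side is a decreasing function of the second moment; its only purpose is to reduce the claim to an \emph{upper} bound on $\intRn|x|^2\,d\mu$. For that I would invoke \eref{holder} for the seminorm $x\mapsto|x|$ with the fixed exponents $p$ and $2$ (both in the bounded range $[1,2]$, so the constant is universal), together with the trivial reverse inequality $\big(\intRn|x|^p\,d\mu\big)^{1/p}\le\big(\intRn|x|^2\,d\mu\big)^{1/2}$ from Jensen, to get $\big(\intRn|x|^2\,d\mu\big)^{1/2}\simeq\big(\intRn|x|^p\,d\mu\big)^{1/p}$. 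Combined with $\intRn|x|^p\,d\mu=nZ_{p,\mu}$ this gives $\intRn|x|^2\,d\mu\ \sleq\ (nZ_{p,\mu})^{2/p}$.

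Finally I would feed this back into the Jensen bound: since $(p-2)/2<0$ the map $u\mapsto u^{(p-2)/2}$ is decreasing, so the upper bound on the second moment converts into a lower bound $Z_{p,\mu}\ \sgeq\ (nZ_{p,\mu})^{(p-2)/p}=n^{(p-2)/p}Z_{p,\mu}^{(p-2)/p}$, the implied constant being universal because $p\in[1,2]$. Rearranging, $Z_{p,\mu}^{2/p}=Z_{p,\mu}^{1-(p-2)/p}\sgeq n^{(p-2)/p}$, and raising to the power $p/2$ yields precisely $Z_{p,\mu}\sgeq n^{(p-2)/2}=n^{p/2-1}$. The main obstacle here is not technical but conceptual — the realization that the a priori worthless Jensen step becomes decisive once it absorbs the $\Lpiso$-driven upper bound on $\intRn|x|^2\,d\mu$, so that the two estimates close into a bootstrap for $Z_{p,\mu}$; everything after that is routine manipulation of exponents, where the only thing to check is that the constants stay uniform as $p\to1$ and $p\to2$. (As a sanity check, the bound is sharp on the $\Lpiso$ Euclidean ball and cube, where $Z_{p,\mu}\simeq n^{p/2-1}$.)
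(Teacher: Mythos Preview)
Your argument is correct and takes a genuinely different route from the paper's. The paper's proof brings in the isotropizing map $T$ and a scale $a>0$ with $aT\in SL_n$, then leans repeatedly on Corollary~\ref{cor.uniqueness} (the $SL_n$-extremality of the \Lpiso\ position): first $\int|x|^p\,d\mu\le a^p\int|x|^p\,dT_*\mu$ together with a Jensen step against the $p$-th moment gives $Z_{p,\mu}\sgeq a^{p-2}n^{p/2-1}$; the same extremality gives $nZ_{p,\mu}\sleq a^p n^{p/2}$, forcing $a\sgeq1$; and a third pass (now applying the Corollary to the isotropic $T_*\mu$ at exponent $2$) closes the loop. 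You sidestep both $T$ and Corollary~\ref{cor.uniqueness} entirely: the trace identity $\int|x|^p\,d\mu=nZ_{p,\mu}$, one Jensen step against the \emph{second} moment, and the log-concave reverse H\"older \eref{holder} applied to $\mu$ itself yield a self-referential inequality $Z_{p,\mu}\sgeq(nZ_{p,\mu})^{(p-2)/p}$ that unwinds directly. This is more economical and intrinsic---the paper in fact remarks just after its proof that Corollary~\ref{cor.uniqueness} is unnecessary here and points to an alternative in the thesis, presumably along your lines. The only thing the paper's detour produces beyond the claim is the side information $a\sgeq1$, which is not reused downstream anyway.
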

\begin{proof}
	Without loss of generality, let $a>0$ be such that $aT\in SL_n$. We apply Jensen's inequality, Corollary \ref{cor.uniqueness} and inequality \eref{holder} repeatedly. First,
	\begin{align*}
	Z_{p,\mu} \ = \ \int |x|^{p-2} \ d\mu \ &\geq \ \bigg(\int |x|^p \ d\mu\bigg)^{1-2/p} \ 
	\\&
	\geq \ a^{p-2}\bigg(\int |x|^p \ dT_*\mu\bigg)^{1-2/p} \ \simeq \ a^{p-2}n^{p/2-1}
	\end{align*}
	as $T_*\mu$ is isotropic. Second, as $\Tr\Covp(\mu)=nZ_{p,\mu}$ we have
	\begin{align*}
	nZ_{p,\mu} \ = \ \int |x|^p \ d\mu \ \leq \ a^p\int |x|^p \ dT_*\mu \ \simeq \ a^pn^{p/2}.\tab\tab\tab\tab\ 
	\end{align*}
	Hence $a^{p-2}\sleq a^p$ and $a\sgeq 1$. Finally,
	\begin{align*}
	\ nZ_{p,\mu} \ = \ \int |x|^p \ d\mu \ &\simeq \ \bigg(\int |x|^2 \ d\mu \bigg)^{p/2} \ 
	\\&
	\geq \ a^p\bigg(\int |x|^2 \ dT_*\mu\bigg)^{p/2} \ = \ a^pn^{p/2} \ \sgeq \ n^{p/2}.
	\end{align*}
\end{proof}
Corollary \ref{cor.uniqueness} is not necessary to prove Claim \ref{app.proximity.claim1}, see \cite[Chapter 4]{thesis} for an alternative proof.
We may now obtain the first part of Lemma \ref{proximity}.
\begin{claim}\label{app.proximity.claim2}
	Under the assumptions of Lemma \ref{proximity}, $\|T\|_{\text{op}}\sleq 1$.
\end{claim}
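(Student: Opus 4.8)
The plan is to bound $\Lambda:=\opnorm{T}$ by a universal constant by squeezing the $L^p$-covariance scalar $Z_{p,\mu}$ between two estimates: the lower bound $Z_{p,\mu}\sgeq n^{p/2-1}$ furnished by Claim \ref{app.proximity.claim1}, and an upper bound of the shape $Z_{p,\mu}\sleq\Lambda^{-p}n^{p/2-1}$ which I will extract from the isotropy of $T_*\mu$ using the reverse-H\"older inequality \eref{holder} and Paouris' negative-moment inequality \eref{paouris}.

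First I would choose a maximally expanding direction for $T$: unit vectors $\phi,\psi\in\Sn$ with $T^*\phi=\Lambda\psi$ (take $\phi$ a top left singular vector, so that $\abs{T^*\phi}=\opnorm{T^*}=\Lambda$ and $\psi$ is the corresponding right singular vector). Since $T_*\mu$ is isotropic and $Tx\cdot\phi=x\cdot T^*\phi=\Lambda(x\cdot\psi)$, we get $1=\intRn(Tx\cdot\phi)^2\,d\mu=\Lambda^2\intRn(x\cdot\psi)^2\,d\mu$, hence $\intRn(x\cdot\psi)^2\,d\mu=\Lambda^{-2}$. Next, because $\abs{Tx}\le\Lambda\abs{x}$ for all $x$, negative moments of $\abs{x}$ under $\mu$ are controlled by those under $T_*\mu$: applying \eref{paouris} to the isotropic log-concave measure $T_*\mu$ with $k=2$ (legitimate since $n\ge C'$, so $2\le c\sqrt n$), and recalling $\intRn\abs{x}^2\,dT_*\mu=n$, we obtain $\intRn\abs{x}^{-2}\,d\mu\le\Lambda^2\intRn\abs{x}^{-2}\,dT_*\mu\sleq\Lambda^2 n^{-1}$; Jensen's inequality then upgrades this to $\intRn\abs{x}^{-k}\,d\mu\sleq\Lambda^k n^{-k/2}$ for every $0<k\le 2$.

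Now I would bring in $L^p$-isotropy. Writing $Z_{p,\mu}=\intRn(x\cdot\psi)^2\abs{x}^{p-2}\,d\mu$ and applying Cauchy--Schwarz,
\[
Z_{p,\mu}\ \le\ \Big(\intRn(x\cdot\psi)^4\,d\mu\Big)^{1/2}\Big(\intRn\abs{x}^{-2(2-p)}\,d\mu\Big)^{1/2}.
\]
The first factor is $\sleq\intRn(x\cdot\psi)^2\,d\mu=\Lambda^{-2}$ by the reverse-H\"older inequality \eref{holder} for the seminorm $x\mapsto\abs{x\cdot\psi}$ (with a universal constant, the exponents $2$ and $4$ being fixed); the second factor, by the previous paragraph with $k=2(2-p)\in[0,2]$, is $\sleq\Lambda^{2-p}n^{-(2-p)/2}$. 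Multiplying, $Z_{p,\mu}\sleq\Lambda^{-p}n^{-(2-p)/2}=\Lambda^{-p}n^{p/2-1}$, and comparing with Claim \ref{app.proximity.claim1} forces $\Lambda^p\sleq 1$, whence $\opnorm{T}=\Lambda\sleq 1$ since $p\ge 1$. The estimates themselves are short; the only point needing care is the bookkeeping of constants --- every invocation of \eref{holder} and \eref{paouris} must be at exponents bounded by an absolute constant, which is automatic here because $p\in[1,2]$ confines all relevant exponents to $[0,4]$ (indeed only $k=2$ of Paouris is really used, the rest coming from Jensen), and one should note at the outset that $\mu$ is genuinely absolutely continuous, so that \eref{holder} applies, since $\Cov_p(\mu)$ scalar forces the support of $\mu$ to span $\Rn$.
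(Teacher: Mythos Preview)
Your proof is correct and follows essentially the same route as the paper: both apply Cauchy--Schwarz to $Z_{p,\mu}=\int(x\cdot\theta)^2|x|^{p-2}\,d\mu$, control the fourth-moment factor via the reverse-H\"older inequality \eref{holder}, control the negative-moment factor via Jensen together with Paouris' inequality \eref{paouris}, and then compare with the lower bound of Claim \ref{app.proximity.claim1}. The only cosmetic differences are that the paper keeps $\theta$ arbitrary and takes the infimum $\inf_\theta|T^{-1,*}\theta|=\|T\|_{\mathrm{op}}^{-1}$ at the end, whereas you select the extremal direction $\psi$ at the outset, and you route Paouris explicitly through the isotropic measure $T_*\mu$ (which is in fact a bit cleaner than the paper's phrasing).
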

\begin{proof}
Fix $\theta\in\Sn$. As $\Tr\Covp(\mu)=Z_{p,\mu}\Id$ and by the Cauchy-Schwartz inequality we have
\begin{align*}
Z_{p,\mu} \ = \ \int (x\cdot \theta)^2 \ |x|^{p-2} \ d\mu(x) \ \leq \ \sqrt{\int (x\cdot\theta)^4 \ d\mu(x)}\cdot\sqrt{\int |x|^{2(p-2)} \ d\mu}.
\end{align*}
We tend to the first multiplier above. By inequality \eref{holder}, a change of variable then by isotropicity of $T_*\mu$,
\begin{align*}
\sqrt{\int(x\cdot\theta)^4 \ d\mu(x)} \ \simeq \ \int (x\cdot\theta)^2 \ d\mu(x) \ = \ \int (x\cdot T^{-1,*}\theta)^2 \ dT_*\mu(x) \ = \ |T^{-1,*}\theta|^2.
\end{align*}
As for the second multiplier, by Jensen's inequality then inequality \eref{paouris} 
\begin{align*}
\sqrt{\int |x|^{2(p-2)} \ d\mu} \ &\leq \ \bigg(\int |x|^{-2} \ d\mu\bigg)^{1-p/2} \ \simeq \ \bigg(\int |x|^2 \ d\mu \bigg)^{p/2-1}\ 
\\&
= \ \bigg(\int |T^{-1}x|^2 \ dT_*\mu \bigg)^{p/2-1} \ 
\\&
\leq \ \|T\|_{op}^{2-p}\bigg(\int |x|^2 \ dT_*\mu\bigg)^{p/2-1} \ = \ n^{p/2-1}\|T\|_{\text{op}}^{2-p} \tab\tab
\end{align*}
as $\|T\|_{\text{op}}^{-1}\leq |T^{-1}x|/|x|\leq \|T^{-1}\|_{op}$ for any $x\in\Rn$. Combining the above with Claim \ref{app.proximity.claim1} we arrive at
\begin{align*}
n^{p/2-1}\|T\|_{\text{op}}^{2-p}\ |T^{-1,*}\theta|^2 \ \sgeq \ Z_{p,\mu} \ \sgeq \ n^{p/2-1}.
\end{align*}
Taking the infimum over all $\theta\in\Sn$ we get
\begin{align*}
\|T\|_{\text{op}}^{p/2-1} \ \sleq \ \inf_{\theta\in\Sn}|T^{-1,*}\theta| \ = \ \|T^*\|_\text{op}^{-1} \ = \ \|T\|_\text{op}^{-1},
\end{align*}
and hence $\|T\|_\text{op} \sleq 1$ and the proof is complete.
\end{proof}
\nline
At this point, note that a one sided bound on $Z_{p,\mu}$ in the opposite direction may be obtained as well. Indeed, 
\begin{align*}
Z_{p,\mu}  \ = \ \int |T^{-1}x|^{p-2} \ dT_*\mu \ \leq \ \|T\|_\text{op}^{2-p} \int |x|^{p-2} \ dT_*\mu \ = \ \|T\|_\text{op}^{2-p}\ Z_{p,T_*\mu},
\end{align*}
while $\|T\|_\text{op}^{2-p}\sleq 1$ by Claim \ref{app.proximity.claim2}. As for the isotropic $T_*\mu$ we have by Jensen's inequality, then inequality \eref{paouris} that
\begin{align*}
Z_{p,T_*\mu} \ \sleq \ \bigg(\int |x|^{-2} \ dT_*\mu\bigg)^{1-p/2} \ \simeq \ \bigg(\int |x|^2 \ dT_*\mu \bigg)^{p/2-1} \ = \ n^{p/2-1}.
\end{align*}
We arrive at $Z_{p,\mu}\simeq n^{p/2-1}$ and continue with the proof of Lemma \ref{proximity}.
\begin{claim}\label{app.proximity.claim3}
	Under the assumptions of Lemma \ref{proximity}, for any $\theta\in\Sn$
	\begin{align*}
	\intRn (x\cdot\theta)^2 \ |x|^{p-2} \ dT_*\mu(x) \ \sgeq \ n^{p/2-1}.
	\end{align*}
\end{claim}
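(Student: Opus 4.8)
The plan is to discard the role of $T$ and argue directly with the isotropic log-concave measure $\nu:=T_*\mu$, so that the assertion to be proven becomes $\int_{\Rn}(x\cdot\theta)^2|x|^{p-2}\,d\nu(x)\sgeq n^{p/2-1}$ for every $\theta\in\Sn$. The only delicate point is that $p-2\le 0$, so the weight $|x|^{p-2}$ is largest where $|x|$ is small, and one must rule out that too much of the mass of the measure $(x\cdot\theta)^2\,d\nu$ sits in a tiny ball about the origin.

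First I would fix a radius $R=\varepsilon\sqrt n$, with $\varepsilon\in(0,1)$ a small universal constant to be chosen, and split the integral at $\{|x|>R\}$. On that set $|x|^{p-2}\ge R^{p-2}=\varepsilon^{p-2}n^{p/2-1}\ge n^{p/2-1}$ (using $p-2\le 0$ and $\varepsilon<1$), so it suffices to prove $\int_{|x|>R}(x\cdot\theta)^2\,d\nu\ge\tfrac12$. Since $\nu$ is isotropic, $\int_{\Rn}(x\cdot\theta)^2\,d\nu=1$, so this in turn reduces to the small-ball estimate $\int_{|x|\le R}(x\cdot\theta)^2\,d\nu\le\tfrac12$.

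To bound the last integral I would apply Cauchy--Schwarz, $\int_{|x|\le R}(x\cdot\theta)^2\,d\nu\le\big(\int(x\cdot\theta)^4\,d\nu\big)^{1/2}\,\nu(|x|\le R)^{1/2}$. The first factor is $\sleq 1$ by the reverse-H\"older inequality \eref{holder} applied to the seminorm $x\mapsto|x\cdot\theta|$ together with $\int(x\cdot\theta)^2\,d\nu=1$. For the second factor, Markov's inequality gives $\nu(|x|\le R)\le R^2\int|x|^{-2}\,d\nu$, and Paouris's inequality \eref{paouris} with $k=2$ (admissible since $n\ge C'$), combined with $\int|x|^2\,d\nu=n$, yields $\int|x|^{-2}\,d\nu\sleq n^{-1}$, hence $\nu(|x|\le R)\sleq\varepsilon^2$. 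Therefore $\int_{|x|\le R}(x\cdot\theta)^2\,d\nu\le C\varepsilon$ for a universal $C$, and choosing $\varepsilon=1/(2C)$ finishes the proof.

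The argument is essentially routine once this reduction is made; the temptation to resist is to change variables back to $\mu$ and rewrite the integral as a constant times $Z_{p,\mu}\,|T^*\theta|^2$, since bounding $|T^*\theta|$ from below is precisely $\opnorm{T^{-1}}\sleq 1$, i.e.\ the remaining half of Lemma \ref{proximity} that Claim \ref{app.proximity.claim3} is meant to feed into. So the one real point is recognizing that the estimate must be obtained intrinsically for $\nu$, via the negative-moment (small-ball) bound coming from \eref{paouris}, rather than from properties of $T$.
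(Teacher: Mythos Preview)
Your argument has the key inequality backward. Since $1\le p\le 2$, the exponent $p-2$ is nonpositive and $t\mapsto t^{p-2}$ is \emph{nonincreasing}; hence on $\{|x|>R\}$ you have $|x|^{p-2}\le R^{p-2}$, not $\ge$. The weight $|x|^{p-2}$ is bounded from \emph{below} precisely on the bounded set $\{|x|\le R\}$, so the piece of the integral you must retain is the one inside the ball. Your small-ball estimate $\int_{|x|\le R}(x\cdot\theta)^2\,d\nu\le\tfrac12$ therefore proves exactly the wrong thing: it shows that most of the mass of $(x\cdot\theta)^2\,d\nu$ lies where the weight is small. (Your opening intuition is likewise inverted: mass near the origin is \emph{helpful} for the lower bound, since the weight is large there; the danger is mass far from the origin.)

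The paper argues in the opposite direction. It chooses $R=\sqrt{4M}\,\sqrt n$ with $M$ a \emph{large} universal constant (the reverse-H\"older constant for $\int(x\cdot\theta)^4\,d\nu$), uses Chebyshev to get $\nu(|x|>R)\le 1/(4M)$, and then a Cauchy--Schwarz/Paley--Zygmund-type inequality to conclude $\int_{|x|\le R}(x\cdot\theta)^2\,d\nu\ge\tfrac12$. On $\{|x|\le R\}$ one has $|x|^{p-2}\ge R^{p-2}=(4M)^{p/2-1}n^{p/2-1}\simeq n^{p/2-1}$, and the claim follows. No appeal to Paouris's negative-moment bound is needed; the relevant tail is the large-$|x|$ one, controlled by ordinary Chebyshev. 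Your argument is easily repaired by making this swap: take $R$ a large multiple of $\sqrt n$, restrict to $\{|x|\le R\}$, and bound $\int_{|x|>R}(x\cdot\theta)^2\,d\nu$ above (rather than $\int_{|x|\le R}$) via Cauchy--Schwarz and Chebyshev.
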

\begin{proof}
	The proof is based on the following simple probabilistic inequality, obtained by the Cauchy-Schwartz inequality. If $X$ is a non-negative random variable and $A$ is an event with $\P(A)\geq 1-(\E X)^2/4\E X^2$, then $\E X\I_A\geq \E X/2$.
	By inequality \eref{holder}, let $M>0$ be a universal constant such that 
	\begin{align*}
	\intRn (x\cdot\theta)^4 \ dT_*\mu(x) \ \leq \ M\bigg(\intRn (x\cdot\theta)^2 \ dT_*\mu(x)\bigg)^2 \ = \ M
	\end{align*}
	for all $\theta\in\Sn$. By Chebyshev's inequality,
	\begin{align*}
	T_*\mu\big(\big\{\ |x|>\sqrt{4Mn}\ \big\}\big) \ \leq \ \oo{4Mn}\intRn |x|^2 \ dT_*\mu \ = \ \oo{4M}.
	\end{align*}
	We may thus apply the probabilistic inequality to obtain for any $\theta\in\Sn$,
	\begin{align*}
	\int_{|x|\geq\sqrt{4Mn}} (x\cdot\theta)^2 \ dT_*\mu(x) \ \geq \ \half\intRn (x\cdot\theta)^2 \ dT_*\mu(x) \ = \ \half.
	\end{align*}
	To conclude the proof, let $\theta\in\Sn$ and see that
	\begin{align*}
	\intRn (x\cdot\theta)^2\  |x|^{p-2} \ dT_*\mu(x) \ &\geq \ \int_{|x|\leq\sqrt{4Mn}}(x\cdot\theta)^2 \ |x|^{p-2}\ dT_*\mu(x)
	\\&
	\geq \ (4Mn)^{p/2-1} \int_{|x|\leq\sqrt{4Mn}}(x\cdot\theta)^2 \ dT_*\mu(x) \ 
	\\&
	\sgeq \ n^{p/2-1}.
	\end{align*}
\end{proof}

\begin{claim}\label{app.proximity.claim4}
	Under the assumptions of Lemma \ref{proximity}, $\|T^{-1}\|_{\text{op}}\sleq 1$.
\end{claim}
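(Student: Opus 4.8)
\textbf{Proof proposal for Claim \ref{app.proximity.claim4}.} The plan is to mirror the proof of Claim \ref{app.proximity.claim2}, but to feed in the lower bound of Claim \ref{app.proximity.claim3} in place of a direct moment computation. The starting point is again the $L^p$-isotropy of $\mu$: for an arbitrary unit vector $\theta\in\Sn$ we have $Z_{p,\mu}=\intRn (x\cdot\theta)^2|x|^{p-2}\,d\mu(x)$, and after the change of variable $x=T^{-1}y$ this becomes
\begin{equation*}
Z_{p,\mu} \ = \ \intRn (y\cdot T^{-1,*}\theta)^2\,|T^{-1}y|^{p-2}\ dT_*\mu(y).
\end{equation*}

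First I would use that $1\le p\le 2$, so that the exponent $p-2$ is non-positive: combined with $|T^{-1}y|\le\opnorm{T^{-1}}|y|$ this gives the pointwise bound $|T^{-1}y|^{p-2}\ge\opnorm{T^{-1}}^{p-2}|y|^{p-2}$. Next, for an arbitrary unit vector $\phi\in\Sn$, I would make the choice $\theta=T^*\phi/|T^*\phi|$, so that $T^{-1,*}\theta=\phi/|T^*\phi|$. Substituting both facts and invoking Claim \ref{app.proximity.claim3} applied to the unit vector $\phi$ yields
\begin{equation*}
Z_{p,\mu} \ \sgeq \ \frac{\opnorm{T^{-1}}^{p-2}}{|T^*\phi|^2}\intRn (y\cdot\phi)^2|y|^{p-2}\ dT_*\mu(y) \ \sgeq \ \frac{\opnorm{T^{-1}}^{p-2}}{|T^*\phi|^2}\ n^{p/2-1}.
\end{equation*}

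Finally, combining with the upper estimate $Z_{p,\mu}\sleq n^{p/2-1}$ recorded just after Claim \ref{app.proximity.claim2}, the factor $n^{p/2-1}$ cancels and we are left with $|T^*\phi|^2\sgeq\opnorm{T^{-1}}^{p-2}$ for every unit $\phi$. Taking the infimum over $\phi$ and using $\inf_{|\phi|=1}|T^*\phi|=\opnorm{(T^*)^{-1}}^{-1}=\opnorm{T^{-1}}^{-1}$ gives $\opnorm{T^{-1}}^{-2}\sgeq\opnorm{T^{-1}}^{p-2}$, that is $\opnorm{T^{-1}}^{-p}\sgeq 1$, hence $\opnorm{T^{-1}}\sleq 1$ since $p>0$. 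I do not foresee a genuine obstacle here beyond bookkeeping; the two points that need care are the reversal of the inequality when raising $|T^{-1}y|$ to the non-positive power $p-2$, and using only the already-established upper half of $Z_{p,\mu}\simeq n^{p/2-1}$ so as to avoid circularity.
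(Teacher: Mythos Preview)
Your proof is correct and follows essentially the same route as the paper: start from $Z_{p,\mu}=\int(x\cdot\theta)^2|x|^{p-2}\,d\mu$, push forward by $T$, use $|T^{-1}y|^{p-2}\ge\opnorm{T^{-1}}^{p-2}|y|^{p-2}$, and invoke Claim \ref{app.proximity.claim3} together with $Z_{p,\mu}\sleq n^{p/2-1}$ to force $\opnorm{T^{-1}}^p\sleq 1$. The only cosmetic difference is in the final optimization: the paper leaves $\theta$ free, takes the supremum of $|T^{-1,*}\theta|$ and identifies it with $\opnorm{T^{-1}}$, whereas you reparametrize via $\theta=T^*\phi/|T^*\phi|$ and take the infimum of $|T^*\phi|$, which is the reciprocal of the same quantity.
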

\begin{proof}
	 Fix $\theta\in\Sn$. As $\Covp(\mu)=Z_{p,\mu}\Id$ and by Claim \ref{app.proximity.claim3} we have
	 \begin{align*}
	 Z_{p,\mu} \ &= \ \int (x\cdot\theta)^2 \ |x|^{p-2} \ d\mu(x) \ 
	 \\&
	 = \ \int (x\cdot T^{-1,*}\theta)^2 \ |T^{-1}x|^{p-2} \ dT_*\mu
	 \\&
	 \geq \ \|T^{-1}\|_\text{op}^{p-2}\int (x\cdot T^{-1,*}\theta)^2 \ |x|^{p-2} \ dT_*\mu  \ 
	 \\&
	 \sgeq\ \|T^{-1}\|_\text{op}^{p-2}\ |T^{-1,*}\theta|^2\ n^{p/2-1}.
	 \end{align*}
	 Taking the supremum over all $\theta\in\Sn$ we get
	 \begin{align*}
	 Z_{p,\mu}\ n^{1-p/2}\ \|T^{-1}\|_\text{op}^{2-p} \ \sgeq \ \sup_{\theta\in\Sn }|T^{-1,*}\theta|^2 \ = \ \|T^{-1,*}\|_\text{op}^2 \ = \ \|T^{-1}\|_\text{op}^2
\end{align*}
	and recall that $Z_{p,\mu}\simeq n^{p/2-1}$.
\end{proof}

\end{document}